\newcommand{\ubar}[1]{\underaccent{\bar}{#1}}
\newcommand{\abs}[1]{\left\lvert#1\right\rvert}
\newcommand{\setb}[2]{ \left\{ #1 \ \middle| \  #2 \right\}  }
\def\CF{{\widehat{\mathscr{P}}}}
\def\One{\mathbbm{1}} 
\def\D{{\mathcal{D}}}
\def\S{{\mathcal{S}}}
\def\R{{\mathcal{R}}}
\def\C{ \mathbb{C}}
\def\Z{ \mathbb{Z}}
\def\N{ \mathbb{N}}
\def\R{ \mathbb{R}}
\def\drm{\mathrm{d}}
\def\Der{\mathrm{D}}
\def\reg{\tau}
\def\rate{\rho}
\def\G{\mathcal{G}}
\def\indmom{p_{\max}}
\def\indasysup{\beta_0}
\def\indlocsup{\beta_\infty}
\def\indlocinf{\ubar{\beta}_\infty}
\begin{document}



\section{Introduction and Main Results} \label{sec:intro}

We study the Besov regularity of L\'evy white noises.
We are especially interested in identifying the critical local smoothness and the critical asymptotic growth rate of those random processes for any integrability parameter $p\in (0,\infty]$.
In a nutshell, our contributions are as follows.

\begin{enumerate}

	\item \emph{Wavelet Methods for L\'evy White Noises.} First appearing in the eighties, especially in the works of Y. Meyer~\cite{MeyerWaO}, I. Daubechies~\cite{Daubechies92}, and S. Mallat~\cite{Mallat1999}, wavelet techniques have become primary tools in functional analysis~\cite{Triebel2008function}.
	As such, they are a natural choice to study random processes, as is done, for instance, with fractional Brownian motion~\cite{Meyer1999wavelets}, S$\alpha$S processes~\cite{Pad2015optimality}, and with solutions of singular stochastic partial differential equations~\cite{Hairer2014theory,Hairer2017reconstruction}.
	In this paper, we demonstrate that wavelet methods are also adapted to the analysis of the L\'evy white noise.
	In particular, all our results are derived using the wavelet characterization of weighted Besov spaces.
	\vspace{0.1cm}
	
		\item \emph{New Moment Estimates for L\'evy White Noise.} The wavelet method allows us to obtain lower and upper bounds for the moments of a L\'evy white noise as a function of the wavelet scale. Our moment  estimates, which are new contributions to the rich literature on the moments of L\'evy and L\'evy-type processes~\cite{Deng2015shift,Kuhn2017existence,kuhn2017levy,Laue1980remarks,Luschgy2008moment}, are fundamental for our study of the Besov regularity of the L\'evy white noise. 
			\vspace{0.1cm}

	\item \emph{Besov Regularity of L\'evy White Noises.}
	Regularity properties are usually stated in terms of the inclusion of the process in some weighted Besov spaces (positive result).
	In order to show that such a characterization is sharp, it is  of interest to identify the smoothness spaces in which the process is \emph{not included} (negative result).
	To the best of our knowledge,  very  little is known in this direction.
	A precise answer to this question requires a more evolved analysis as compared to positive results.  
 	We achieve this goal thanks to the use of wavelets.
	
	It is worth noting that our analysis requires the identification of a new index associated to a L\'evy white noise, characterized by moment properties. 
	By relying on this index, our negative results suggest moreover that some of the previous state-of-the-art inclusions are not sharp.
	We are then able to improve some of these results, in particular for the growth properties of the L\'evy white noise.
					\vspace{0.1cm}

	\item \emph{Critical Local Smoothness and Asymptotic Rate.} The combination of positive and negative results allows us to determine the critical Besov parameters of a L\'evy white noise, both for the local smoothness and the asymptotic behavior. The results are summarized in Theorem \ref{maintheo:regdecaylimit}. Two consequences are the characterization of the critical Sobolev and H\"older-Zigmund regularities of the L\'evy white noise in Corollary \ref{coro:SHZ}.
	
\end{enumerate}

	\subsection{Local Smoothness and Asymptotic Rate of Tempered Generalized Functions} \label{sec:taurho}

We construct random processes as random elements in the space $\S'(\R^d)$ of tempered generalized functions from $\R^d$ to $\R$ (see Section \ref{subsec:noises}). 
We  therefore describe their local and asymptotic properties as we would  for a (deterministic) tempered generalized function.
To do so, we rely on the family of weighted Besov spaces. They are embedded in $\S'(\R^d)$ and allow for the  joint  study of the local smoothness and the asymptotic behavior of a generalized function. 
 
Besov spaces are denoted by $B_{p,q}^{\tau}(\R^d)$, with $\tau \in \R$ the \emph{smoothness}, $p \in (0,\infty]$ the \emph{integrability parameter}, and $q \in (0,\infty]$ a secondary parameter.
In this paper, we focus on the case $p=q$ and we use the simplified notation $B_{p,p}^{\tau}(\R^d) = B_p^{\tau}(\R^d)$ for those spaces which are also referred to as Slobodeckij spaces after~\cite{slobodeckii1958generalized}. See~\cite{di2011hitchhiker} or~\cite[Section 2.2.1]{Triebel2010theory} for more details.
We say that $f$ is in the weighted Besov space $B_p^{\reg}(\R^d;\rate)$ with \emph{weight exponent} 
$\rho \in \R$ if $\langle \cdot \rangle^{\rho} \times f$ is in the classic Besov space 
$B_p^{\reg}(\R^d)$, with the notation 
$\langle \bm{x} \rangle = (1 + \lVert \bm{x} \rVert^2)^{1/2}$.
We precisely define weighted Besov spaces in Section \ref{subsec:besovspaces} in terms of wavelet expansions.  
For the time being, it is sufficient to remember that the space of tempered generalized functions satisfies \cite[Proposition 1]{Kabanava2008tempered}
\begin{equation} \label{eq:decomposeSprime}
	\S'(\R^d) = \bigcup_{\tau, \rho \in \R} B_p^{\tau}(\R^d;\rho)
\end{equation} 
for any fixed $0 < p \leq \infty$.
Ideally,  we aim at identifying in which weighted Besov space  a given $f \in \S'(\R^d)$ is. The relation \eqref{eq:decomposeSprime} implies that, for any $p$, there exists some $\tau,\rho \in \R$ for which this is true.
For a fixed $p$, Besov spaces are continuously embedded in the sense that, for $\reg, \reg_0,  \reg_1 \in \R$ and $\rate, \rate_0 , \rate_1 \in \R$  such that $\reg_0 \geq \reg_1$ and $\rate_0 \geq \rate_1$, we have
\begin{equation} \label{eq:trivialembed}
	B_{p}^{\reg_0}(\R^d;\rate)  \subseteq B_p^{\reg_1}(\R^d;\rate)  \quad \text{and} \quad
	B_{p}^{\reg}(\R^d;\rate_0)  \subseteq B_p^{\reg}(\R^d;\rate_1).
\end{equation}
To characterize the properties of $f \in \S'(\R^d)$, the key is to determine the two critical exponents $\tau_p(f) \in (-\infty, \infty]$ and $\rho_p(f) \in (-\infty, \infty]$ such that
\begin{itemize}
	\item if $\reg < \reg_p(f)$ and $\rate < \rate_p(f)$, then $f \in B_p^\reg(\R^d;\rate)$; while
	\item  if $\reg > \reg_p(f)$ or $\rate > \rate_p(f)$, then $f \notin B_p^\reg(\R^d;\rate)$.
\end{itemize}
The case $\tau_p(f) = \infty$ corresponds to infinitely smooth functions, and \mbox{$\rho_p(f) = \infty$} means that $f$ is rapidly decaying. 
The quantity $\tau_p(f)$ measures the \emph{local smoothness} and $\rate_p(f)$ the \emph{asymptotic rate} of $f$ for the integrability parameter  $p$. When $\rate_p(f)\leq 0$ (which will be the case for the L\'evy white noise), we talk about the \emph{asymptotic growth rate} of $f$.

	\subsection{Local Smoothness and Asymptotic Growth Rate of L\'evy White Noises} \label{subsec:taurhointro}

The complete family of L\'evy white noises, defined as random elements in the space of generalized functions, was introduced by I.M. Gel'fand and N.Y. Vilenkin \cite{GelVil4}. 
Recently, R. Dalang and T. Humeau completely characterized the L\'evy white noises located in the space $\S'(\R^d)$ of tempered generalized functions \cite{Dalang2015Levy}.
We briefly  introduce in this section the concepts required to state our main results. A more complete exposition is given in Section \ref{sec:maths}.

Our main contributions concern the inclusion of a  L\'evy white noise $w$ in weighted Besov spaces. It includes positive   ($w$ is almost surely \emph{in} a given Besov space) and negative  ($w$ is almost surely \emph{not in} a given Besov space) results. 
In order to characterize the  local smoothness $\tau_p(w)$ and the asymptotic growth rate $\rate_p(w)$, let 
$X = \langle w , \One_{[0,1]^d} \rangle$ 
be the random variable that corresponds to the integration of the L\'evy white noise $w$ over the domain $[0,1]^d$. The \emph{characteristic exponent} $\Psi$ of $w$ is the logarithm of the characteristic function of $X$. More precisely, for every $\xi \in \R$, 
\begin{equation}
	\Psi(\xi) = \log \mathbb{E} \left[ \mathrm{e}^{\mathrm{i} \xi  \langle w , \One_{[0,1]^d} \rangle } \right]
 =  \log \mathbb{E} \left[ \mathrm{e}^{\mathrm{i} \xi X } \right] .
\end{equation}

	We associate to a L\'evy white noise  its \emph{Blumenthal-Getoor indices}, defined as 
\begin{align}
	\indlocsup &= \inf \setb{p>0}{\underset{\abs{\xi} \rightarrow \infty}{\lim} \frac{\abs{\Psi(\xi)}}{\abs{\xi}^p}= 0},  \label{eq:indass} \\
	\indlocinf &= \inf \setb{p>0}{\underset{\abs{\xi} \rightarrow \infty}{\lim\inf} \frac{\abs{\Psi(\xi)}}{\abs{\xi}^p}= 0}. \label{eq:indasi}	
\end{align}
The distinction is that $\indlocsup$ considers the limit, while $\indlocinf$ deals with the inferior limit. In general, one has that $0 \leq \indlocinf \leq \indlocsup \leq 2$. The Blumenthal-Getoor indices are linked to the local behavior of L\'evy processes and L\'evy white noises (see Section \ref{subsec:indices} for more details).
	In addition, we introduce the \emph{moment index} of the L\'evy white noise  $w$ as
\begin{align} 
	\indmom &= \sup \setb{p>0}{\mathbb{E}[\lvert\langle w ,\One_{[0,1]^d}\rangle \rvert^p] < \infty},	 \label{eq:indmom}
\end{align}
which is closely related---but in general not identical---to the Pruitt index (see Section \ref{subsec:indices}).
As we shall see, $\indmom \in (0, \infty]$ fully characterizes the asymptotic growth rate of $w$.
The class of L\'evy white noises is  rich, and includes Gaussian and  compound Poisson white noises. We summarize the results of this paper in Theorem \ref{maintheo:regdecaylimit}. We use the convention that $1/ p = 0$ when $p = \infty$.

\begin{theorem} \label{maintheo:regdecaylimit}
Consider a L\'evy white noise $w$ with Blumenthal-Getoor indices $0\leq \indlocinf \leq \indlocsup \leq 2$ and moment index $0 < \indmom \leq \infty$. We fix $0<p\leq \infty$.
\begin{itemize}
	\item If $w$ is a Gaussian white noise, then, almost surely,
	\begin{equation} \label{eq:GNtaurho}
	\reg_p(w) = - \frac{d}{2} \text{ and } \rate_p(w) = - \frac{d}{p}.
	\end{equation}
	\item If $w$ is a compound Poisson white noise, then, almost surely,
	\begin{equation} \label{eq:PNtaurho}
	\reg_p(w) =  \frac{d}{p} -d   \text{ and } \rate_p(w) = - \frac{d}{\min(p,\indmom)}.
	\end{equation}		
	
	\item If $w$ is a L\'evy white noise and not a Gaussian white noise, then, almost surely,
		\begin{equation} \label{eq:LNtaurho}
 \frac{d}{\max(p,\indlocsup)} -d    \leq
\reg_p(w) \leq  \frac{d}{\max(p,\indlocinf)} -d.
	\end{equation}		
	In particular, if $p \geq \indlocsup$, then $\tau_p(w) = d / p - d$. 
	
	\item If $w$ is a L\'evy white noise and not a Gaussian white noise, then, almost surely, if $p\in (0,2)$, $p$ is an even integer, or $p = \infty$,
	\begin{equation} \label{eq:LNtaubelow}
 \rate_p(w) = - \frac{d}{\min(p,\indmom)},
 	\end{equation}	
	and for any $0<p \leq \infty$, 	
	\begin{equation} \label{eq:LNtaubelow2}
 \rate_p(w) \geq - \frac{d}{\min(p,\indmom)}.
 	\end{equation}	
 					
\end{itemize}
\end{theorem}
 In a nutshell, Theorem \ref{maintheo:regdecaylimit} provides:
\begin{enumerate}
\item A full characterization of the local smoothness and the asymptotic growth rate for Gaussian and compound Poisson white noises;
\item A characterization of the asymptotic growth rate for any L\'evy white noise for integrability parameter $p\leq 2$, $p$ an even integer, or $p=\infty$; and
\item A full characterization of the local smoothness of a L\'evy white noise for which $\indlocinf=\indlocsup$; that is, for any $p \in (0,\infty]$, 
		\begin{equation} \label{eq:LNtaurhobis}
\reg_p(w) = \frac{d}{\max(p,\indlocsup)} -d.
	\end{equation}	
\end{enumerate}

We discuss the remaining cases---the local smoothness for $\indlocinf < \indlocsup$ and the asymptotic growth rate for $p>2,  p\notin 2 \N$---in Section \ref{sec:conclusiveremarks}.
Two direct consequences are the identification of the Sobolev ($p=2$) and H\"older-Zigmund ($p= \infty$) regularity of L\'evy white noises.
	
	\begin{corollary} \label{coro:SHZ}
		Let $w$ be a L\'evy white noise in $\S'(\R^d)$ with Blumenthal-Getoor indices $0\leq \indlocinf \leq \indlocsup \leq 2$ and moment index $0 < \indmom \leq \infty$. Then, the Sobolev local smoothness and asymptotic growth rate ($p=2$) are 
		\begin{equation}
			\tau_2(w) = - \frac{d}{2} \quad \text{and} \quad \rho_2(w) = - \frac{d}{\min ( 2, \indmom )}.
		\end{equation}
	Moreover, the H\"older-Zigmund local smoothness and asymptotic growth rate ($p=\infty$) are 
	\begin{align}
			\tau_\infty(w) = - \frac{d}{2} \quad &\text{and} \quad \rho_\infty(w) = 0  \quad \text{if } w \text{ is Gaussian, and} \\
			\tau_\infty(w) = -  d \quad &\text{and} \quad \rho_\infty(w) = - \frac{d}{ \indmom } \quad \text{otherwise}.
	\end{align}
	\end{corollary}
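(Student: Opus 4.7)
The plan is to derive Corollary \ref{coro:SHZ} as a direct specialization of Theorem \ref{maintheo:regdecaylimit} to the two values $p = 2$ and $p = \infty$. Both values lie in the scope of the theorem: $p = 2$ is an even integer, and $p = \infty$ is explicitly listed among the admissible values in \eqref{eq:LNtaurho}. Hence no new probabilistic or wavelet-analytic input is needed beyond the main theorem, and the task reduces to algebraic simplification in the three model cases (Gaussian, compound Poisson, general non-Gaussian) followed by the observation that these simplifications all collapse to the single formulas announced in the corollary.

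For the Sobolev case $p=2$, I would handle each branch of Theorem \ref{maintheo:regdecaylimit} separately. In the Gaussian case, \eqref{eq:GNtaurho} immediately yields $\tau_2(w) = -d/2$ and $\rho_2(w) = -d/2$; a Gaussian variable admits moments of all orders, so $\indmom = \infty$ and $-d/\min(2,\indmom) = -d/2$, consistent with the statement. In the compound Poisson case, \eqref{eq:PNtaurho} at $p=2$ gives $\tau_2(w) = d/2 - d = -d/2$ and directly the claimed asymptotic rate. In the remaining non-Gaussian case, \eqref{eq:LNtaurho} at $p=2$ reads
\begin{equation*}
\frac{d}{\max(2,\indlocsup)} - d \;\leq\; \tau_2(w) \;\leq\; \frac{d}{\max(2,\indlocinf)} - d,
\end{equation*}
and since the Blumenthal-Getoor indices satisfy $0 \leq \indlocinf \leq \indlocsup \leq 2$, both maxima equal $2$ and both bounds pinch to $-d/2$; the rate is read off directly. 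The three branches therefore unify into the stated Sobolev formula.

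For the Hölder-Zygmund case $p=\infty$, the Gaussian branch of Theorem \ref{maintheo:regdecaylimit} delivers $\tau_\infty(w) = -d/2$ and $\rho_\infty(w) = -d/\infty = 0$ at once. For the compound Poisson branch, \eqref{eq:PNtaurho} gives $\tau_\infty(w) = d/\infty - d = -d$ and $\rho_\infty(w) = -d/\min(\infty,\indmom) = -d/\indmom$. For the non-Gaussian branch, the key observation is that sending $p \to \infty$ in \eqref{eq:LNtaurho} forces $\max(p,\indlocsup) = \max(p,\indlocinf) = \infty$, so the lower and upper bounds on $\tau_\infty(w)$ both equal $0 - d = -d$; the rate simplifies in the same way. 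Combining the compound Poisson and general non-Gaussian cases produces the ``otherwise'' line of the corollary.

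Since every step is purely algebraic manipulation of the formulas in Theorem \ref{maintheo:regdecaylimit}, there is no serious obstacle at this stage; the entire conceptual and technical difficulty (wavelet characterization of weighted Besov spaces, moment estimates for wavelet coefficients of the noise, sharpness via negative results) is absorbed in the proof of the main theorem. The only minor care needed is to verify that $\indmom = \infty$ in the Gaussian case and to check that the endpoint values $p=2$ and $p=\infty$ fall within the admissible parameter ranges of the main theorem, which they do.
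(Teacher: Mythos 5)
Your proof is correct and follows essentially the same route as the paper: both deduce the corollary by specializing Theorem \ref{maintheo:regdecaylimit}, using $\indlocsup \leq 2$ to pinch the two bounds in \eqref{eq:LNtaurho} to $-d/2$ at $p=2$. The only (immaterial) difference is at $p=\infty$, where you substitute $p=\infty$ directly into the theorem, whereas the paper passes to the limit $p=2m\to\infty$ via the embedding relations of Proposition \ref{prop:besovembed}; both are legitimate since the theorem explicitly includes $p=\infty$ among the admissible values.
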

	
	\begin{proof}
	The case $p=2$ is directly deduced from Theorem \ref{maintheo:regdecaylimit} and {the relation $\indlocinf \leq \indlocsup \leq 2$. 
	For $p= \infty$, we use again Theorem \ref{maintheo:regdecaylimit} with $p=\infty$. 
	The Gaussian case follows from~\eqref{eq:GNtaurho}. 
	For the non-Gaussian case, \eqref{eq:LNtaurho} gives $\tau_\infty(w)$, while \eqref{eq:LNtaubelow} gives $\rho_\infty(w)$.}
	\end{proof}	
	
	\subsection{Local Smoothness of L\'evy Processes}

	It is worth noting that the Sobolev regularity is the same---$\tau_2(w) = - d/2$---for any L\'evy white noise.
	We also observe that the H\"older-Zigmund regularity of any non-Gaussian L\'evy white noise is $(-d)$ (which is also the regularity of a Dirac impulse), the Gaussian case being different and reaching a smoothness of $(-d/2)$. {In the one-dimensional setting ($d=1$),} this is reminiscent to the fact that the Brownian motion is the only continuous random process with independent and stationary increments, the other L\'evy processes being only \emph{c\`adl\`ag} (French acronym for functions that are right continuous with left limits at every points)~\cite{Bertoin1998levy}. Using Theorem \ref{maintheo:regdecaylimit}, we deduce the local smoothness of L\'evy processes in  Corollary \ref{coro:levyprocess}. 

	\begin{corollary} \label{coro:levyprocess}
		{Let $s : \R \rightarrow \R$ be a one-dimensional L\'evy process} with Blumenthal-Getoor indices $0 \leq \indlocinf \leq \indlocsup \leq 2$. Then, we have almost surely that, for any $0<p\leq \infty$,
	\begin{align}
			\tau_p(s) & = \frac{1}{2} \quad \text{if } s \text{ is the Wiener process, and } \\
		 	\tau_p(s) &=\frac{1}{p}  \quad  \text{if } s \text{ is a compound Poisson process}.
	\end{align}
	In the general case, we have almost surely that, for any $0<p\leq \infty$,
	\begin{equation}
	\frac{1}{\max(p,\indlocsup)}\leq \tau_p(s)  \leq \frac{1}{\max(p,\indlocinf)}.
	\end{equation}
	\end{corollary}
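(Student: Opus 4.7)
The plan is to deduce this corollary from Theorem \ref{maintheo:regdecaylimit} by exploiting the pathwise relation $\partial_t X = w$ between the L\'evy process $X$ on $\R$ and its associated L\'evy white noise $w$, together with the fact that distributional integration raises the local Besov smoothness index by exactly $1$. Since no asymptotic rate $\rho_p(X)$ is claimed, only the local component of the Besov analysis is needed, which simplifies considerably the weight bookkeeping.

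First, I would make rigorous the identification $X = \mathrm{I}\,w$ in $\S'(\R)$, where $X(t) = \langle w, \One_{[0,t]} \rangle$ for $t \ge 0$ (extended by symmetry for $t<0$); this is the Gel'fand--Vilenkin construction of a L\'evy process from its white noise recalled in Section \ref{sec:maths}, and it yields $\partial_t X = w$ almost surely in $\S'(\R)$. Next, I would invoke the standard lifting property: in dimension one, the primitive operator maps $B_p^{\reg}(\R;\rate)$ continuously into $B_p^{\reg+1}(\R;\rate')$ for a suitable $\rate'$, while the distributional derivative $\partial_t\colon B_p^{\sigma}(\R;\rate) \to B_p^{\sigma-1}(\R;\rate)$ is continuous as well. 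Combining these two mapping properties with the definition of the critical exponent gives $\tau_p(X) = \tau_p(w) + 1$ almost surely.

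Plugging in the one-dimensional cases of Theorem \ref{maintheo:regdecaylimit} then immediately yields $\tau_p(X) = 1/2$ when $X$ is Gaussian and $\tau_p(X) = 1/p$ when $X$ is compound Poisson, as well as
\begin{equation*}
\frac{1}{\max(p,\indlocsup)} \;\le\; \tau_p(X) \;\le\; \frac{1}{\max(p,\indlocinf)}
\end{equation*}
in the general non-Gaussian case for the admissible values of $p$, which are exactly those covered by Theorem \ref{maintheo:regdecaylimit}.

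The main obstacle will be to justify the lifting identity $\tau_p(X) = \tau_p(w) + 1$ at the level of critical exponents. The positive direction (if $w \in B_p^{\reg}$ locally then $X \in B_p^{\reg+1}$ locally) follows from the continuity of the primitive on Besov spaces. The negative direction requires showing that if $X \in B_p^{\reg}$ locally then $w = \partial_t X \in B_p^{\reg-1}$ locally; this uses continuity of the distributional derivative on Besov spaces, which is classical but requires some attention in the quasi-Banach regime $p<1$ and in tracking the polynomial weights needed to embed everything back in $\S'(\R)$.
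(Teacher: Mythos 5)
Your proposal is correct and follows essentially the same route as the paper: identify $w = X'$ in the sense of generalized random processes, use the lifting identity $\tau_p(X) = \tau_p(w) + 1$, and read off the one-dimensional cases of Theorem \ref{maintheo:regdecaylimit}. The paper simply cites \cite{Dalang2015Levy} for the rigorous identification $w = X'$ and treats the shift of the critical exponent by $1$ as a direct consequence, whereas you spell out the two mapping properties (primitive and derivative on Besov scales) needed to justify it; that is a welcome elaboration, not a different argument.
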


\begin{proof}
	A one-dimensional L\'evy white noise $w$ is the weak derivative of the corresponding L\'evy process $s$ with identical characteristic exponent. 
	This well-known fact has been rigorously shown in the sense of generalized random processes in \cite[Definition 3.4 and Proposition 3.17]{Dalang2015Levy}.
	A direct consequence is that $\tau_p(s) = \tau_p(w) + 1$, where $w = s'$. 
	Then, Corollary \ref{coro:levyprocess} is a reformulation of the local smoothness results of Theorem \ref{maintheo:regdecaylimit} with $d=1$. 
\end{proof}

	\subsection{Related Works on L\'evy Processes and L\'evy White Noises}
	\label{sec:related}
In this section, for comparison purposes, we reinterpret all the results in terms of the critical smoothness  and asymptotic growth rate of the considered random processes.

	\textit{L\'evy Processes.}
	Most of the attention has been  so far  devoted to classic L\'evy processes. 
	The Wiener process was studied in \cite{ciesielski1993orlicz,Ciesielski1993quelques,hytonen2008,Roynette1993,Sjogren1982riemann,veraar2009correlation}, while \cite{Ciesielski1993quelques} also contains results on the Besov regularity of fractional Brownian motions and S$\alpha$S processes. By exploiting the self-similarity of the stable processes, Ciesielski \emph{et al.} obtained the following results for the Gaussian \cite[Theorem IV.3]{Ciesielski1993quelques} and  stable non-Gaussian \cite[Theorem VI.1]{Ciesielski1993quelques} scenarios:
	\begin{align}
		\frac{1}{2} \leq \tau_p(s_{\mathrm{Gauss}})  \leq \frac{1}{\min(2,p)} , \label{eq:BMtau} \\
		\frac{1}{\max(p,\alpha)} \leq \tau_p(s_{\alpha} ) \leq  \frac{1}{\min(p,\alpha)}, \label{eq:SaStau}
	\end{align}	
for any $p \geq 1$, where $s_{\mathrm{Gauss}}$ is the Brownian motion and $s_{\alpha}$ is the S$\alpha$S process with parameter $1 < \alpha < 2$. 

The complete family of L\'evy processes---and, more generally, of L\'evy-type processes---has been considered by R. Schilling in a series of papers~\cite{Schilling1997Feller,Schilling1998growth,Schilling2000function} synthesized in \cite[Chapter V]{Bottcher2014levy} and by V. Herren~\cite{Herren1997levy}. 
To summarize, Schilling has shown that, for a L\'evy process $s$ with indices $0 \leq \indlocsup \leq 2$ and $0<\indmom \leq \infty$,
	\begin{align}
		\frac{1}{\max(p,\indlocsup)} &\leq \tau_{p}(s) \leq \frac{1}{p}, \label{eq:boundtauXSchilling} \\
		 -\frac{1}{p} - \frac{1}{\min(\indmom,2)} &\leq \rate_p(s). \label{eq:boundrhoXSchilling}
	\end{align}	

	 We observe that  \eqref{eq:BMtau}, \eqref{eq:SaStau}, and \eqref{eq:boundtauXSchilling}  are consistent with Corollary \ref{coro:levyprocess}. 
	Moreover, our results provide an improvement by showing that the lower bounds of  \eqref{eq:BMtau} and \eqref{eq:SaStau} are actually sharp.
		Finally, we significantly improve the upper bound of \eqref{eq:boundtauXSchilling} for general L\'evy processes. 

 In contrast to the smoothness, the growth rate \eqref{eq:boundrhoXSchilling} of the L\'evy process $s$  does not seem to be related to the one of  its derivative the L\'evy white noise $w = s'$   by a simple relation. In particular, the rate of $s$ is expressed in terms of the Pruitt index  $ \indasysup = \min ( \indmom , 2)$, conversely to $\indmom$ for $w$ (see Section \ref{subsec:indices}). 
		This needs to be confirmed by a precise estimation of $\rho_p(s)$ for which only a lower bound is known. 
		
		\textit{L\'evy White Noises.}
M. Veraar extensively studied  the local Besov regularity of the $d$-dimensional Gaussian white noise. As a corollary of  \cite[Theorem 3.4]{Veraar2010regularity}, one then deduces that $\tau_{p}(w_{\mathrm{Gauss}}) = - d /2$. 
This work is based on the specific properties of the Fourier series expansion of the random process under the Gaussianity assumption, and cannot be directly adapted to L\'evy white noises.

In our own works, we have investigated the question for general L\'evy white noises in dimension $d$ in the periodic \cite{Fageot2017besov} and global settings \cite{Fageot2017multidimensional}. We obtained the lower bounds
	\begin{align}
		\frac{d}{\max(p,\indlocsup)} - d \leq &\tau_{p}(w), \quad  \text{ and }\label{eq:boundtauwus} \\
		- \frac{d}{\min(p,\indmom,2)} \leq &\rate_p(w). \label{eq:boundrhowus}
	\end{align}	
	These estimates are improved by	Theorem \ref{maintheo:regdecaylimit}, which  provides an upper bound for $\tau_{p}(w)$ and shows that \eqref{eq:boundtauwus} is sharp when $\indlocsup = \indlocinf$. 
	It is also worth noticing that the lower bound of \eqref{eq:boundrhowus} is sharp if and only if $\indmom \leq 2$.

	\subsection{Sketch of Proof and the Role of Wavelet Methods} \label{subsec:sketch}
	
	Our techniques are based on the wavelet characterization of Besov spaces, as presented by H. Triebel in~\cite{Triebel2008function}. We shall see that wavelets are especially relevant to the analysis of L\'evy white noises.
	
	We  briefly present the strategy of the proof of Theorem \ref{maintheo:regdecaylimit} when the ambiant dimension is $d=1$.  	
	The  general case $d\geq 1$ is analogous and will be comprehensively addressed  in the rest of the paper.
	Let $(\psi_M,\psi_F)$ be the (mother, father) Daubechies wavelets of a fixed order (the choice of the order has no influence on the results as soon as it is large enough). 
	For $j \in \N$ and $k\in \Z$, we define the rescaled and shifted functions $\psi_{F,k} = \psi_F (\cdot - k)$ and $\psi_{j,M,k} = 2^{j/2} \psi_M (2^j \cdot - k)$.
	Then, the family $(\psi_{F,k})_{k \in \Z} \cup (\psi_{j , M, k})_{j \in \N , k\in \Z}$ forms an orthonormal basis of $L_2(\R)$~\cite{Daubechies92}.  
	For a given one-dimensional L\'evy white noise $w$, one considers the family of random variables 
	\begin{equation} \label{eq:waveletrandomvariable}
	(\langle w , \psi_{F,k}\rangle )_{k\in \Z} \cup (\langle w , \psi_{j,M,k} \rangle)_{j \in \N , k\in \Z}.
	\end{equation}
	We then have that $w = \sum_{k\in \Z} \langle w ,\psi_{F,k}\rangle \psi_{F,k}+\sum_{j \in \N}\sum_{k\in \Z} \langle w ,\psi_{j,M,k}\rangle \psi_{j,M,k}$, where the convergence is almost sure in $\S'(\R)$.
	
	Then, for $0<p < \infty$  (the case $p = \infty$ will be deduced by embedding and is not discussed in this section) and $\reg, \rate\in\R$, the random variable 
\begin{equation} \label{eq:methodsbesov}
	\lVert w \rVert_{B_p^\tau(\R ;\rho)} =  \left(
		\sum_{k\in \Z}  \langle k \rangle^{\rate p}  \lvert \langle w , \psi_{F,k} \rangle \rvert^p
		+
		\sum_{j \in \N} 2^{j(\reg p -1 + \frac{p}{2})} 
		\sum_{k\in \Z} \langle 2^{-j} k \rangle^{\rate p}    \lvert \langle w ,\psi_{j,M,k}\rangle \rvert^p \right)^{1/p}
\end{equation}
is well-defined  and takes values in $[0,\infty]$.  
Here, $\lVert w \rVert_{B_p^\tau(\R ;\rho)}$ is the Besov (quasi-)norm of the L\'evy white noise (see Section \ref{subsec:besovspaces}). This means that $w$ is a.s. (almost surely) in $B_p^\tau(\R ;\rho)$ if and only if $\lVert w \rVert_{B_p^\tau(\R ;\rho)} < \infty$  a.s., and  a.s.  not in  $B_p^\tau(\R ;\rho)$ if and only if $\lVert w \rVert_{B_p^\tau(\R ;\rho)}  = \infty$ a.s.

We then fix $0<p < \infty$. We assume that we have guessed the values $\reg_p(w)$ and $\rate_p(w)$  introduced in Section \ref{sec:intro}. Here are the main steps leading to the proof that these values are effectively the critical ones.

\begin{itemize}
	
	\item For $ \tau < \tau_p(w) $ and $\rho < \rho_p(w)$, we show that $\lVert w \rVert_{B_p^\tau(\R;\rho)} < \infty$ a.s. For $p < \indmom$ (see \eqref{eq:indmom}), we establish the stronger result  $\mathbb{E} \left[ \lVert w \rVert_{B_p^\tau(\R;\rho)}^p \right] < \infty$. This requires moment estimates for the wavelet coefficients of a L\'evy white noise, which gives  a precise estimation of the behavior of $\mathbb{E}[  \lvert \langle w , \psi_{j,M, k} \rangle \rvert^p ]$ as $j$ goes to infinity. When $p > \indmom$,  the random variables $\langle w , \psi_{j,M, k} \rangle$ have an infinite $p$th moment and the present method is not applicable. In that case, we actually deduce the result using   embedding relations between Besov spaces. It turns out that this approach is sufficient to obtain sharp   results. 
	
	\item For $\tau > \tau_p(w)$, we show that $\lVert w \rVert_{B_p^\tau(\R;\rho)} = \infty$ a.s.
		To do so, we only consider the mother wavelet and truncate the sum over $k$ to yield the lower bound
		\begin{equation} \label{eq:principlesmoothness}
		\lVert w \rVert_{B_p^\tau(\R;\rho)}^p \geq 
		 C \sum_{j \in \N} 2^{j(\reg p - 1 + \frac{p}{2})} 
		\sum_{0\leq k < 2^{j}}  \lvert \langle w , \psi_{j,M, k} \rangle \rvert^p
		\end{equation}
		for some constant $C$ such that   $\langle 2^{-j} k  \rangle^{\rate p} \geq C$ for every $j \in \N$ and $0\leq k < 2^{j}$.
 		We then need to show  that the wavelet coefficients $\langle w , \psi_{j,M, k} \rangle$ cannot be too small altogether using Borel-Cantelli-type arguments. Typically, this requires us to control the evolution  of quantities such as $\mathbb{P}(\abs{\langle w ,\psi_{j,M,k} \rangle } > x)$ with respect to $j$ and is again based on moment estimates.  
		
	\item For $\rho > \rho_p(w)$, we show again that $\lVert w \rVert_{B_p^\tau(\R;\rho)} = \infty$ a.s.
		This time, we only consider the father wavelet  in \eqref{eq:methodsbesov}  and use the lower bound 
\begin{equation}  \label{eq:principledecay}
	\lVert w \rVert_{B_p^\tau(\R;\rho)}^p  \geq	\sum_{k\in \Z} \langle  k  \rangle^{\rate p} \lvert \langle w , \psi_{F,k} \rangle \rvert^p.
\end{equation}			
	A Borel-Cantelli-type argument is again used to show that the $ \lvert \langle w , \psi_{F,k} \rangle \rvert $ cannot be too small altogether, and that the Besov norm is a.s.  infinite.
\end{itemize}

	The rest of the paper is dedicated to the proof of Theorem \ref{maintheo:regdecaylimit}. The required  mathematical concepts---L\'evy white noises as generalized random processes and weighted Besov spaces---are laid out in Section \ref{sec:maths}.
	In Sections \ref{sec:Gaussian}, \ref{sec:Poisson}, and \ref{sec:finitemoments}, we consider  the case of Gaussian white noises, compound Poisson white noises, and finite-moments L\'evy white noises, respectively. { Section~\ref{sec:estimates} provides some new moment estimates for Lévy white noises that are preparatory to the upcoming sections. 
	The general case is deduced in Section \ref{sec:general}, where we provide the proof of Theorem \ref{maintheo:regdecaylimit}. } Finally, we discuss our results and give important examples in Section \ref{sec:blabla}. 
	
\section{Preliminaries: L\'evy White Noises and Weighted Besov Spaces} \label{sec:maths}

	\subsection{L\'evy White Noises as Generalized Random Processes} \label{subsec:noises}
	
	The theory of generalized random processes was initiated independently by K. It\^o~\cite{Ito1954distributions} and  I.M. Gel'fand~\cite{Gelfand1955generalized} in the 50's and corresponds to the probabilistic counterpart of the theory of generalized functions of L. Schwartz. It was later brought to light by Gel'fand himself together with N.Y. Vilenkin in~\cite[Chapter III]{GelVil4}. 
	In this framework, a generalized random process is characterized by its effects against test functions. This allows to consider random processes that are not necessarily defined pointwise, as is the case for the L\'evy white noise. The theory of generalized random processes,  besides being very general, appears to be very flexible for the construction and analysis of random processes. It is a powerful alternative to more classic approaches, as argumented in~\cite{Cartier1963processus,Fernique1967processus}. The theory of generalized random processes is used as the ground for generalized CARMA processes~\cite{Brockwell2010carma} and fields~\cite{berger2019levydriven2,berger2020levy}, for conformal field theory in statistical physics~\cite{Abdesselam2020second}, for studying the solutions of stochastic differential PDEs~\cite{dalang2019random,Ito1984foundations,walsh1986introduction}, and as random models in signal processing~\cite{Bostan2013map,clarkson2016characteristic,Fageot2015wavelet,Unser2014sparse}.

	We shall define random processes as random elements of the space $\S'(\R^d)$, that we introduce now.  
	Let	$\S(\R^d)$ be the space of rapidly decaying smooth functions from $\R^d$ to $\R$. It is endowed with its natural Fr\'echet nuclear topology \cite{Treves1967}. Its topological dual is the space of tempered generalized functions $\S'(\R^d)$.
	It is endowed with the strong topology and $\mathcal{B}(\S'(\R^d))$ denotes the Borelian $\sigma$-field for this topology. 
	Note that $\S'(\R^d)$ can be endowed with other natural $\sigma$-fields: the one associated to the weak-* topology or the cylindrical $\sigma$-field generated by the cylinders 
	$$\{ u \in \S'(\R^d), \ (\langle u ,\varphi_1 \rangle,\ldots, \langle u ,\varphi_N \rangle ) \in B \}$$
	 for $N\geq 1$, $\varphi_n \in \S(\R^d)$, and $B$ a Borelian subset of $\R^N$. However, these different $\sigma$-fields are known to coincide in this case~\cite[Proposition 3.8 and Corollary 3.9]{Bierme2017generalized}\footnote{This is true in general for the dual of a nuclear Fr\'echet space. Note that this is not obvious and is typically not true for other spaces of generalized functions, such as $\mathcal{D}'(\R^d)$~\cite{Ito1984foundations}.}. See also It\^o's~\cite{Ito1984foundations} and Fernique's monographs \cite{Fernique1967processus} for general discussions on the measurable structures of function spaces.
	Throughout the paper, we fix a complete probability space $(\Omega, \mathcal{F}, \mathscr{P})$.
	
	\begin{definition} \label{def:GRP}
		A measurable function $s$ from $(\Omega, \mathcal{F})$ to $(\S'(\R^d), \mathcal{B}(\S'(\R^d)))$ is called a \emph{generalized random process}. Its \emph{probability law} is the probability measure on $\S'(\R^d)$ defined for $B \in \mathcal{B}(\S'(\R^d))$ by 
		\begin{equation}
			\mathscr{P}_s ( B ) = \mathscr{P} (\{ \omega \in \Omega , \ s(\omega) \in B \}).
		\end{equation}
		The \emph{characteristic functional} of $s$ is the functional $\CF_s : \S(\R^d) \rightarrow \C$ such that 
		\begin{equation}
			\CF_s(\varphi) = \int_{\S'(\R^d)} \mathrm{e}^{\mathrm{i} \langle u , \varphi \rangle } \drm \mathscr{P}_s (u).
		\end{equation}
	\end{definition}
	
It turns out that the characteristic functional is continuous, positive-definite over $\S(\R^d)$, and normalized such that $\CF_s(0) = 1$. The converse of this result is also true: if $\CF$ is a continuous and positive-definite functional over $\S(\R^d)$ such that $\CF(0) = 1$, then  it is the characteristic functional of a generalized random process in $\S'(\R^d)$. This  is known as the Bochner-Minlos theorem. It was initially proved in~\cite{Minlos1959generalized} and uses the nuclearity of $\S'(\R^d)$. See \cite[Theorem 2.3]{simon1979functional} for an elegant proof based on the Hermite expansion of tempered generalized functions~\cite{Simon2003distributions}.
It means in particular that one can define generalized random processes via the specification of their characteristic functional. Following Gel'fand and Vilenkin, we use this principle to introduce L\'evy white noises.

We consider functionals of the form $\CF(\varphi) = \exp \left( \int_{\R^d} \Psi ( \varphi ( \bm{x} ) ) \drm \bm{x} \right)$.
It is known that $\CF$ is a characteristic functional over the space $\D(\R^d)$  of compactly supported smooth functions if and only if the function $\Psi : \R \rightarrow \C$ is continuous, conditionally positive-definite, with $\Psi(0) = 0$ \cite[Section
III-4, Theorems 3 and 4]{GelVil4}. A function $\Psi$ that satisfies  these conditions is called a \emph{characteristic exponent} and can be decomposed according to the L\'evy-Khintchine theorem \cite[Theorem 8.1]{Sato1994levy} as 
		\begin{equation} \label{eq:LK}
		 	\Psi(\xi) = \mathrm{i} \mu \xi - \frac{\sigma^2 \xi^2}{2} + \int_{\R} (\mathrm{e}^{\mathrm{i} \xi t} - 1 - \mathrm{i} \xi t \One_{\abs{t} \leq 1} ) \drm \nu (t),
		\end{equation}
		where $\mu \in \R$, $\sigma^2 \geq 0$, and $\nu$ is a \emph{L\'evy measure}, which means a positive measure on $\R$ such that $\nu (\{0\} )  = 0$ and $\int_{\R}  \inf( 1 , t^2 ) \drm \nu ( t) < \infty$. The triplet $(\mu,\sigma^2,\nu)$ is unique and called the \emph{L\'evy triplet} of $\Psi$. 

In our case, we are only interested in the definition of L\'evy white noises over $\S'(\R^d)$. This requires an adaptation of the construction of Gel'fand and Vilenkin. 
We say that the characteristic exponent $\Psi$ satisfies the \emph{$\epsilon$-condition} if there exists some $\epsilon>0$ such that $\int_{\R}  \inf( \abs{t}^\epsilon , t^2 ) \drm \nu ( t) < \infty$, with $\nu$ the L\'evy measure of $\Psi$. 
Then, the functional $\CF(\varphi) = \exp \left( \int_{\R^d} \Psi ( \varphi ( \bm{x} ) ) \drm \bm{x} \right)$ is a  characteristic functional over $\S(\R^d)$ if and only if $\Psi$ is a characteristic exponent that satisfies the $\epsilon$-condition. The sufficiency is proved in \cite[Theorem 3]{Fageot2014} and the necessity in \cite[Theorem 3.13]{Dalang2015Levy}. 

\begin{definition} \label{def:levynoise}
	A \emph{L\'evy white noise} in $\S'(\R^d)$ (or simply a L\'evy white noise) is a generalized random process $w$ with characteristic functional of the form
	\begin{equation}
		\CF_w(\varphi) = \exp\left(  \int_{\R^d} \Psi( \varphi(\bm{x} ) ) \drm \bm{x}\right) 
	\end{equation}
	for every $\varphi \in \S(\R^d)$, where $\Psi$ is a characteristic exponent that satisfies the $\epsilon$-condition.
	
	The L\'evy triplet of $w$ is denoted by $(\mu,\sigma^2, \nu)$. Then, we say that $w$ is a \emph{Gaussian white noise} if $\nu = 0$, a \emph{compound Poisson white noise} if $\mu = \sigma^2 = 0$ and $\nu = \lambda P$, with $\lambda > 0$ and $P$ a probability measure on $\R$ such that $P( \{0\} )= 0$, and a  \emph{L\'evy white noise with finite moments} if $\mathbb{E}[\abs{\langle w, \varphi \rangle}^p] < \infty$ for any $\varphi \in \S(\R^d)$ and $p>0$.
\end{definition}

 The $\epsilon$-condition is extremely mild. L\'evy white noises in $\S'(\R^d)$ include stable white noises, symmetric-gamma white noises, and compound Poisson white noises whose jumps probability measure $P$ admits a finite moment ($\int_{\R} |t|^\epsilon P(\mathrm{d} t) < \infty$ for some $\epsilon>0$)~\cite[Section 2.1.3]{Fageotthese}. 
L\'evy white noises are stationary and independent at every point, meaning that $\langle w , \varphi_1 \rangle$ and $\langle w , \varphi_2 \rangle$  are independent as soon as $\varphi_1$ and $\varphi_2 \in \S(\R^d)$ have disjoint supports~\cite[Section III-4, Theorem 6]{GelVil4}.

	One can extend the space of test functions a given L\'evy white noise can be applied to. This is done by approximating a test function $\varphi$ with functions in $\S(\R^d)$ and showing that the underlying sequence of random variables converges in probability to a random variable that we denote by $\langle w , \varphi \rangle$. This principle is developed with more generality in \cite{Fageot2017unified} by connecting the theory of generalized random processes to independently scattered random measures in the sense of B.S. Rajput and J. Rosinski \cite{Rajput1989spectral}; see also \cite{griffiths2019modelling}.
	In particular, as soon as $\varphi \in L_2(\R^d)$ is compactly supported, the random variable $\langle w, \varphi \rangle$ is well-defined \cite[Proposition 5.10]{Fageot2017unified}. Daubechies wavelets or indicator functions of measurable sets with finite Lebesgue measures satisfy this condition. This was implicitely used in Section \ref{subsec:taurhointro} when considering the random variable $\langle w,  \One_{[0,1]^d}\rangle$ and in Section~\ref{subsec:sketch} when considering the wavelet coefficients of the L\'evy white noise.
	
    { \textit{Remark.}  The random variable $\langle w , \varphi \rangle$ can be interpreted as a stochastic integral with respect to a Lévy sheets $s : \R^d \rightarrow \R$ such that $\Der_1 \ldots \Der_d \{s\} = w$, where $\Der_i$ is the partial derivative  along direction $1\leq i\leq d$. 
   We recall that Lévy sheets are multivariate generalizations of the Lévy processes~\cite{Dalang2015Levy,Dalang1992,griffiths2019modelling}. 
    In that case, we have the formal relation $\langle w , \varphi \rangle =  \int_{\R^d} \varphi(\bm{x}) \mathrm{d}s(\bm{x})$, whose precise meaning has been investigated in~\cite{Dalang2015Levy,Fageot2017unified}.}

	\subsection{The L\'evy-It\^o Decomposition of L\'evy White Noises} \label{subsec:LIdecompo}

The L\'evy-It\^o decomposition is a fundamental result of the theory of L\'evy processes. It reveals that a L\'evy process $s = (s(t))_{t \in \R}$ can be decomposed as $s = s_1 + s_2 + s_3 $, where $s_1$ is a Wiener process, $s_2$ is a compound Poisson process, and $s_3$ is a square integrable pure jump martingale, which corresponds to the small jumps of $s$~\cite[Theorem 2.4.16]{Applebaum2009levy}, \cite[Theorems 19.2 and 19.3]{Sato1994levy}. {The extension of the L\'evy-It\^o decomposition  to the multivariate setting requires to define L\'evy fields, for which different constructions are possible~\cite{Dalang1992,Durand2012multifractal,mori1992representation}. This includes L\'evy sheets, that we already mentioned and for which the L\'evy-It\^o decomposition has been extended for L\'evy sheets in~\cite[Theorem 4.6]{Adler1983representations}.
Using the Lévy-Itô decomposition of Lévy sheets, we are able to provide an identical result for the Lévy white noise. This is based on the connection between Lévy sheets and Lévy white noises, which is one of the main contribution of~\cite{Dalang2015Levy}. Indeed, the L\'evy white noise $w$ satisfies the relation
\begin{equation}
\label{eq:noiseandprocess}
    \mathrm{D}_1 \ldots \mathrm{D}_d  \{s\}  = w
\end{equation}
	for some L\'evy sheet $s$ in $\S'(\R^d)$~\cite[Defintion 3.4 and Proposition 3.17]{Dalang2015Levy}. 
    In dimension $d=1$, we recover that the (weak) derivative of the Lévy process is a Lévy white noise.}

	
	\begin{proposition}
\label{prop:LIdecompo}
	A L\'evy white noise $w$ can be decomposed as 
	\begin{equation} \label{eq:LIde}
		w = w_{1} +  w_2 + w_3
	\end{equation}		
with  $w_{1}$  a Gaussian white noise, $w_2$ a compound Poisson white noise, and $w_3$ a L\'evy white noise with finite moments, the three being independent.	
\end{proposition}

\begin{proof}
	According to \eqref{eq:noiseandprocess}, $w =\mathrm{D}_1 \ldots \mathrm{D}_d  \{s\}$ for some Lévy sheet $s :\R^d \rightarrow \R$. 
	Then, according to~\cite[Theorem 4.6]{Adler1983representations}, $s$ can be decomposed as $s = s_1 + s_2 + s_3$ where $s_1$ is a Brownian sheet, $s_2$ is a compound Poisson sheet, and $s_3$ is L\'evy sheet which is a square integrable pure jump martingale ($s_3$ corresponds to the small jumps of $s$).
	Moreover,  the three random fields $s_1,s_2,s_3$ are independent from each other. Then, the jumps of $s_3$ are bounded by construction, implying that it has finite moments  \cite[Theorem 2.4.7]{Applebaum2009levy}.
	Finally, we have that 
	\begin{equation}
	w = \mathrm{D}_1 \ldots \mathrm{D}_d  \{s\}  = \mathrm{D}_1 \ldots \mathrm{D}_d  \{s_1\}  + \mathrm{D}_1 \ldots \mathrm{D}_d  \{s_2\}   + \mathrm{D}_1 \ldots \mathrm{D}_d  \{s_3\}   := w_1 + w_2 + w_3,
	\end{equation}
	where $w_1$ is a Gaussian white noise, $w_2$ is a compound Poisson white noise, and {$w_3$ is a L\'evy white noise with finite moments. 
	This last point is indeed ensured by the fact that the Lévy measure $\nu_3$ associated to $s_3$ and therefore $w_3$ has a compact support. Hence, we have that $\int_{\R} |t|^p \mathrm{d}\nu_3 (t) < \infty$ for any $p>0$. This implies that $\mathbb{E} [ |\langle w_3 , \varphi \rangle|^p] < \infty$ for any $\varphi \in \mathcal{S}(\R^d)$ and $p > 0$ according to~\cite[Theorem 25.3]{Sato1994levy} (see also Proposition~\ref{prop:momentnoisepmax} thereafter). }
	Note moreover that $w_1$, $w_2$, and $w_3$ are independent, because the corresponding L\'evy sheets are.
\end{proof}
	
	\subsection{Indices of L\'evy White Noises} \label{subsec:indices}
	
	 { We introduce various indices associated to Lévy white noises. First of all, we exclude Lévy white noises with dominant drift via the following classic notion that appears for instance in~\cite{Deng2015shift,Schilling1998growth}. }
	
	\begin{definition}
    {  We  say that a L\'evy white noise $w$ with  characteristic exponent $\Psi$ satisfies the \emph{sector condition} if there exists $M >0$ such that
	\begin{equation} \label{eq:sectorcondition}
	\forall \xi \in \R, \quad	\abs{\Im\{ \Psi(\xi) \}  } \leq M \abs{\Re \{\Psi(\xi)\}}.
	\end{equation}}
	\end{definition}
 { This condition ensures that no drift is dominating the L\'evy white noise. For instance, the deterministic Lévy white noise $w = \mu \neq 0$ a.s., which corresponds to the Lévy triplet $(\mu,0,0)$, is such that $\Psi(\xi) = \mathrm{i} \mu \xi$ and does not satisfy the sector condition. This is also the case for $w = \mu + w_\alpha$ where $w_\alpha$ is a S$\alpha$S process with $\alpha \in (0,2]$. It is worth noting that the characteristic exponent of a symmetric L\'evy white noise is real, and therefore satisfies the sector condition.  
    In the rest of the paper, we will always assume that the sector condition is satisfied without further mention.}

	In Theorem \ref{maintheo:regdecaylimit}, the smoothness and growth rate of L\'evy white noises is characterized in terms of the indices \eqref{eq:indass}, \eqref{eq:indasi}, and \eqref{eq:indmom}. We give here some additional insight about these quantities. 
The index $\indlocsup$ was introduced by R. Blumenthal and R. Getoor \cite{Blumenthal1961sample} to characterize the behavior of L\'evy processes at the origin. This quantity appears to be related to many local properties of random processes driven by L\'evy white noises, including the Hausdorff dimension of the image set \cite{Bottcher2014levy}, the spectrum of singularities \cite{Durand2012multifractal,Jaffard1999multifractal}, the Besov regularity \cite{Bottcher2014levy,Fageot2017multidimensional,Schilling1997Feller,Schilling2000function} and more generally sample path properties~\cite{chong2019path,kuhn2019domain,Rosenbaum2009first}, the local self-similarity \cite{fageot2019scaling}, or the local compressiblity \cite{Fageot2017nterm}. Finally, the   index  $\indlocinf$   plays a crucial role in the specification of negative results, such as the identification of  the Besov spaces in which the L\'evy white noises are not. It satisfies moreover the relation $0 \leq \indlocinf \leq \indlocsup \leq 2$.

{In \cite{Pruitt1981growth}, W. Pruitt proposed the index
\begin{equation}
\indasysup = \sup \setb{p>0}{\underset{\abs{\xi} \rightarrow 0}{\lim} \frac{\abs{\Psi(\xi)}}{\abs{\xi}^p}= 0}   \label{eq:indpruitsup}
\end{equation}
 as the asymptotic counterpart of $\indlocsup$. This quantity appears in the asymptotic growth rate of the supremum of L\'evy(-type) processes \cite{Schilling1998growth} and the asymptotic self-similarity of random processes driven by L\'evy white noises \cite{fageot2019scaling}. The Pruitt index differs from the index $\indmom$ that appears  in Theorem \ref{maintheo:regdecaylimit}. Actually, the two quantities are linked by the relation $\indasysup = \min(\indmom, 2)$. This is shown by linking $\indasysup$ to the L\'evy measure~\cite{Bottcher2014levy} and knowing that $\indasysup \leq 2$ (see the appendix of \cite{Deng2015shift} for a short and elegant proof). This means that $\indasysup < \indmom$ when the L\'evy white noise has some finite $p$th moments fo $p>2$, and one cannot recover $\indmom$ from $\indasysup$ in this case. It is therefore necessary to introduce the index $\indmom$  in addition to the Pruitt index in our analysis. Note moreover that the moment index fully characterizes the moment properties of the L\'evy white noise in the following sense.}
 

\begin{proposition} \label{prop:momentnoisepmax}
Let $0 <p < \infty$ and $w$ be a L\'evy white noise with moment index $\indmom \in (0,\infty]$. We also fix a compactly supported and bounded test function $\varphi \neq 0$. 
If $p < \indmom$, then
\begin{equation}  \label{eq:quandcestfini}
	\mathbb{E} [ |\langle w , \varphi \rangle |^p ] < \infty,
\end{equation}
while if $p > \indmom$, then
\begin{equation} \label{eq:quandcestinfini}
	\mathbb{E} [ |\langle w , \varphi \rangle |^p ] = \infty.
\end{equation}
\end{proposition}

Proposition \ref{prop:momentnoisepmax} can be deduced from more general results presented in \cite{Rajput1989spectral} and \cite{Fageot2017unified}, where the set of test functions $\varphi$ such that $	\mathbb{E} [ |\langle w , \varphi \rangle |^p ] < \infty$ is fully characterized. For us, it is enough to know that the result is true for compactly supported bounded test functions, which includes Daubechies wavelets.
We provide a proof thereafter for the sake of completeness, since this result is not exactly stated as such in the literature and known results require to introduce  tools that are unnecessary for this paper. The first part \eqref{eq:quandcestfini} allows one to consider the moments of $\langle w , \varphi \rangle$ for any $p< \indmom$. The second part \eqref{eq:quandcestinfini} shows that some moments are infinite and will appear to be useful later on. 

\begin{proof}[Proof of Proposition \ref{prop:momentnoisepmax}]
The proof relies on the link between the moments of $w$ and the moments of its L\'evy measure $\nu$. 
{According to \cite[Theorem 25.3]{Sato1994levy}, a random variable $X$ with Lévy measure $\nu$  is such that 
\begin{equation}
    \mathbb{E}[ |X|^p ] < \infty \ \Longleftrightarrow \ \int_{|t| > 1} |t|^p \mathrm{d}\nu(t) < \infty.
\end{equation}
Applying this to $X = \langle w , \One_{[0,1)^d} \rangle$ (whose Lévy measure is indeed $\nu)$) and using the definition of $\indmom$ in \eqref{eq:indmom}, we deduce that
\begin{equation} \label{eq:controlemoica}
	\int_{|t| > 1} |t|^p \mathrm{d}\nu(t) < \infty \
	\text{ if } p< \indmom, \text{ and } \int_{|t| > 1} |t|^p \mathrm{d}\nu(t) = \infty \text{ if } p> \indmom.
\end{equation}}
According to \cite[Proposition 3.14]{Fageot2017unified}, we have that  $\mathbb{E} [ |\langle w , \varphi \rangle |^p ] < \infty$ if and only if
\begin{equation}
	\int_{\R^d} \int_{\R} \lvert t \varphi( \bm{x} )\rvert^p \One_{\lvert t \varphi(\bm{x} ) \rvert > 1} \mathrm{d} \nu(t) \mathrm{d}\bm{x} < \infty.
\end{equation}
Let $K$ be the compact support of $\varphi$. Moreover, the test function being non identically zero, there exists $m>0$ such that $ \mathrm{Leb}(\{ |\varphi| \geq m\}) > 0$
where $\mathrm{Leb}$ is the Lebesgue measure. 
Set $p < \indmom$, then for every $t\in \R$ and every $\bm{x}\in K$, we have that $\One_{|t \varphi(\bm{x})| > 1} \leq  \One_{|t |  \lVert \varphi\rVert_\infty > 1}$. Therefore, according to the left side of \eqref{eq:controlemoica},
\begin{align}
	\int_{\R^d} \int_{\R} \lvert t \varphi( \bm{x} )\rvert^p \One_{\lvert t \varphi(\bm{x} ) \rvert > 1} \mathrm{d} \nu(t) \mathrm{d}\bm{x}
	&\leq \int_K \int_{\R}  \lvert t  \rvert^p \lVert \varphi \rVert_\infty^p    \One_{|t |  \lVert \varphi\rVert_\infty > 1}  \mathrm{d} \nu(t) \mathrm{d}\bm{x}  \nonumber  \\
	&= \mathrm{Leb}(K) \lVert \varphi \rVert_\infty^p  \int_{|t|  > 1/\lVert \varphi \rVert_\infty} |t|^p \mathrm{d} \nu(t) < \infty,
	\end{align}	
	 proving \eqref{eq:quandcestfini}. 
	Moreover, if $p > \indmom$, then, using the right side of \eqref{eq:controlemoica} and the inequality $\One_{|t \varphi(\bm{x})| > 1} \geq \One_{|t| m > 1}$
	for every $\bm{x}\in  \{ \lvert \varphi \rvert \geq m \}$,
	we have
\begin{align}
	\int_{\R^d} \int_{\R} \lvert t \varphi( \bm{x} )\rvert^p \One_{\lvert t \varphi(\bm{x} ) \rvert > 1} \mathrm{d} \nu(t) \mathrm{d}\bm{x}
	&\geq \int_{|\varphi| \geq m} \int_{\R}  \lvert t  \rvert^p m^p    \One_{|t | m > 1}  \mathrm{d} \nu(t) \mathrm{d}\bm{x}  \nonumber \\
	&= \mathrm{Leb}(\{ |\varphi| \geq m\})m^p  \int_{|t|  > 1/m} |t|^p \mathrm{d} \nu(t) = \infty,
	\end{align}	
and \eqref{eq:quandcestinfini} is proved. 
\end{proof}

 We now summarize how the indices of the L\'evy-It\^o decomposition of a L\'evy white noise behave in Proposition \ref{prop:decomposition}.

\begin{proposition} \label{prop:decomposition}
	Let $w$ be a L\'evy white noise $w$ and let 
$w  = w_{1} +  w_2 + w_3$ 
	be its L\'evy-It\^o decomposition according to \eqref{eq:LIde} in Proposition \ref{prop:LIdecompo}, where $w_1$ is Gaussian, $w_2$ is compound Poisson, and $w_3$ have finite moments. 
	
{(i) If $w_1$, $w_2$, and $w_3$ are nonzero, then
	\begin{equation} \label{eq:firststuff}
	 \indlocinf (w_1) = \indlocsup (w_1) =  2, \quad \indlocinf (w_2)  = \indlocsup (w_2)  = 0, \quad \text{and} \quad   \indmom(w_1) = \indmom (w_3) = \infty. 
	\end{equation}
	
	(ii) If $w = w_2 + w_3$ has no Gaussian part ($w_1 = 0$) with $w_3$ nonzero, then
	\begin{align} \label{eq:secondstuff}
	\indlocinf  (w) = \indlocinf (w_3), \quad   \indlocsup (w) = \indlocsup  (w_3), \quad \text{and} \quad  \indmom (w)  = \indmom (w_2).
	\end{align}
	
	(iii) If $w = w_1 + w_2 + w_3$ with $w_1$ and $w_3$ non zero, then  
	\begin{align}\label{eq:thirdstuff}
	\indlocinf  (w) =    \indlocsup (w) = 2 \quad \text{and} \quad \indmom (w) = \indmom (w_2).
	\end{align}	}
	\end{proposition}

\begin{proof}
Let $\Psi$ be the characteristic exponent of $w$, and $(\mu,\sigma^2,\nu)$ be its L\'evy triplet. We assume that $\mu = 0$, what has no impact on the indices.
The L\'evy-It\^o decomposition corresponds to the following sum for the characteristic exponent:
	\begin{align}
		\Psi(\xi) 	 = \underbrace{ - \frac{\sigma^2\xi^2}{2}}_{\Psi_1(\xi)}  + \underbrace{\int_{\R} (\mathrm{e}^{\mathrm{i} \xi t} - 1 )  \One_{\abs{t} > 1} \drm \nu (t)  }_{\Psi_2(\xi)}+  \underbrace{  \int_{\R} (\mathrm{e}^{\mathrm{i} \xi t} - 1 - \mathrm{i} \xi t  ) \One_{\abs{t} \leq 1} \drm \nu (t)}
_{\Psi_3(\xi)},
	\end{align}
	where, $\Psi_{1}$, $\Psi_{2}$, and $\Psi_3$ are the characteristic exponents of $w_1$, $w_2$, and $w_3$ respectively, with respective triplets $(0, \sigma^2, 0)$, $(0,0, \One_{\abs{\cdot} > 1} \nu)$, and  $(0,0, \One_{\abs{\cdot} \leq 1} \nu)$.

(i) The characteristic exponent of $w_1$ is $\Psi_{1}(\xi) = - \sigma^2 \xi^2 /2$, hence $ \indlocinf (w_1) = \indlocsup (w_1) =  2$ follows directly from the definition of the indices in \eqref{eq:indass} and \eqref{eq:indasi}. Moreover, $\Psi_2$ is bounded due to $\lvert \Psi_2(\xi) \rvert \leq 2 \int_{\lvert t \rvert > 1} \mathrm{d} \nu(t) < \infty$, therefore $\indlocinf (w_2)  = \indlocsup (w_2)  = 0$. Finally, we have seen in the proof of Proposition \ref{prop:LIdecompo} that the moments of $w_3$ are finite, hence $\indmom(w_3) = \infty$. It is moreover clear that the moments of the Gaussian white noise are finite, hence $\indmom(w_1) = \infty$ and the relations \eqref{eq:firststuff} are proved.

{(ii)	Assume that $w_1 = 0$. 
	The characteristic exponent $\Psi_2$ is bounded by some constant $C >0$. Hence, since $\Psi(\xi) = \Psi_2(\xi) + \Psi_3(\xi)$, we deduce that
	\begin{equation} \label{eq:tructruc}
	    |\Psi_3(\xi) | - C \leq |\Psi(\xi) | \leq  |\Psi_3(\xi) | + C
	\end{equation}
	for every $\xi\in \R$. The left inequality~\eqref{eq:tructruc} implies that $\indlocinf  (w) \geq \indlocinf (w_3)$ and $\indlocsup (w) \geq \indlocsup  (w_3)$. The right inequality gives the other inequalities for the Blumenthal--Getoor indices and therefore $\indlocinf  (w) = \indlocinf (w_3)$ and $\indlocsup (w) = \indlocsup  (w_3)$.}

    	{For the moment index, we recall that $\abs{a+b}^p \leq 2^{p-1} (\abs{a}^p + \abs{b}^p)$ (by convexity of $x \mapsto x^p$ on $\R^+$) for every $a,b \in \R$ and $p \geq 1$ and that $\abs{a+b}^p \leq (\abs{a}^p + \abs{b}^p)$ when $0 <p <1$ (since $x\mapsto |x|^p$ is subadditive). We set $c_p = 2^{p-1}$ if $p\geq 1$ and $c_p = 1$ if $0<p<1$. }
    Then, if $X$ and $Y$ are two random variables such that $\mathbb{E}[\abs{Y}^p] < \infty$, then we have that
	\begin{equation} \label{eq:XplusY}
		c_p^{-1} \mathbb{E}[\abs{X}^p] - \mathbb{E}[\abs{Y}^p] 
		\leq \mathbb{E}[\abs{X + Y}^p]
		\leq
		c_p(\mathbb{E}[\abs{X}^p]  + \mathbb{E}[\abs{Y}^p]). 
	\end{equation}
	Applying \eqref{eq:XplusY} to $X  = \langle w_2 , \One_{[0,1]^d} \rangle$ and $Y = \langle  w_3 , \One_{[0,1]^d} \rangle$,  the later having finite $p$th moments for any $p>0$, we deduce that
	\begin{equation}
	\mathbb{E} \left[ \abs{\langle w ,  \One_{[0,1]^d} \rangle}^p\right] =\mathbb{E}[\abs{X+Y}^p]   < \infty \ \Longleftrightarrow \ \mathbb{E} \left[ \abs{\langle w_2 ,  \One_{[0,1]^d} \rangle}^p\right] = \mathbb{E}[\abs{X}^p]  < \infty.
	\end{equation}
	Hence, $w$ and $w_2$ have the same moment index. 


(iii) {Assume that $w_1 \neq 0$. Using that $\Psi_1(\xi) = -\sigma^2 \xi^2$} and that $\Psi_2$ and $\Psi_3$ (like every characteristic exponent, see for instance~\cite[Proposition 2.4]{Fageotthese}), is asymptotically dominated by $\xi \mapsto \xi^2$, we deduce that $C_1 \rvert \Psi_1 (\xi) \rvert \leq \lvert \Psi(\xi) \rvert \leq C_2 \rvert \Psi_1 (\xi) \rvert$ for some constants $C_1,C_2 > 0$ and every $\xi \in\R$ such that $\lvert \xi \rvert \geq 1$. Therefore, $w$ and $w_1$ have the same Blumenthal-Getoor indices $\indlocinf = \indlocsup = 2$. We have shown the equalities on Blumenthal-Getoor indices in \eqref{eq:secondstuff} and \eqref{eq:thirdstuff}. {The proof for the moment index is identical to the case $w_1 = 1$, this time with $Y = \langle w_1 + w_3 , \One_{[0,1]^d} \rangle$.}


\end{proof}

	\subsection{Weighted Besov Spaces} \label{subsec:besovspaces}
	
	As we have seen in Section~\ref{sec:related}, Besov spaces are natural candidates for characterizing the regularity of L\'evy processes and L\'evy white noises. 
	We define the family of weighted Besov spaces based on wavelet methods, as exposed in \cite{Triebel2008function}. 
	Besov spaces have a long history in functional analysis~\cite{Triebel2010theory}. They were successfully revisited by the introduction of wavelet methods following the works of Y. Meyer~\cite{MeyerWaO} and applied to the analysis of stochastic processes, including 
	the Brownian motion~\cite{ciesielski1993orlicz,Ciesielski1993quelques,Roynette1993},
	the fractional Brownian motion~\cite{Flandrin1992wavelet,Meyer1999wavelets},
	sparse random processes~\cite{Fageot2015wavelet,Fageot2017nterm,Pad2015optimality,Unser2014sparse},
	and general solutions of SPDEs~\cite{cioica2012spatial,cioica2010adaptive}.
	
	Essentially, weighted Besov spaces are subspaces of $\S'(\R^d)$ that are characterized by weighted sequence norms of the wavelet coefficients.
Following H. Triebel, we use the compactly supported wavelets discovered by I. Daubechies~\cite{Daubechies1988orthonormal}, which we introduce first.
The scale and shift parameters of the wavelets are respectively denoted by $j \in \N$ and $\bm{k}\in \Z^d$. 
The symbols $M$ and $F$ refer to the \emph{gender} of the wavelet ($M$ for the mother wavelets and  $F$ for the father wavelet). 
Consider two  functions $\psi_{M}$ and $\psi_{F} \in L_2(\R)$.
We set $\G^0 = \{ M, F \}^{d}$ and for $j\geq 1$, $\G^j = \{M,F\}^{d} \backslash \{ (F, \ldots , F) \} $.
Therefore,  the cardinal of $\G^0$ is $\mathrm{Card}(\G^0) = 2^d$, while $\mathrm{Card}(\G^j) = 2^d - 1$ for $j \geq 1$.
For $\bm{G} = (G_1,\ldots, G_d) \in \G^0$, called a gender, we set, for every $\bm{x} = (x_1, \ldots , x_d) \in \R^d$, $\psi_{\bm{G}}(\bm{x} ) = \prod_{i=1}^d \psi_{G_i}(x_i)$. 
For $j \in \N$, $\bm{G} \in\G^j$, and $\bm{k} \in  \Z^d$, we define 
\begin{equation}
	\psi_{j,\bm{G},\bm{k}} := 2^{jd/2} \psi_{\bm{G}}(2^j \cdot - \bm{k}).
\end{equation}
We shall also use the notations $\bm{F} = (F,\ldots ,F)$ and $\bm{M}= (M, \ldots , M)$ for the purely father and purely mother genders.
It is known that, for any $r_0 \geq 1$, there exists two functions $\psi_M, \psi_F \in L_2(\R)$, called \emph{Daubechies wavelets}, that are compactly supported, with at least $r_0$ continuous derivatives and vanishing moments up to order at least $(r_0-1)$, and such that the family\footnote{There is a slight abuse of notation when we write $(j,\bm{G},\bm{k})\in \N \times \G^j \times \Z^d$, since $j$ appears as the first element of the triplet $(j,\bm{G},\bm{k})$ and specifies the location of the gender $\bm{G} \in \G^j$. We keep this notation for its convenience.}
$	\{ \psi_{j,\bm{G},\bm{k}} \}_{(j,\bm{G},\bm{k})\in \N \times \G^j \times \Z^d}$
is an orthonormal basis of $L_2(\R^d)$ \cite[Section 1.2.1]{Triebel2008function}.

We now introduce the family of weighted Besov spaces $B_{p}^{\reg}(\R^d;\rate)$. Traditionally, Besov spaces also depend on the additional parameter $q \in (0,\infty]$ (see for instance \cite[Definition 1.22]{Triebel2008function}).  We shall only consider the case $q=p$ in this paper, so that we do not refer to this parameter.

We introduce weighted Besov spaces in Definition \ref{def:besovspaces} relying on the wavelet decomposition of (generalized) functions. This construction is equivalent to the more usual Fourier-based definitions, as proved in \cite[Theorem 1.26]{Triebel2008function}.  We use the notation $(x)_+ = \max(x,0)$. 

\begin{definition}\label{def:besovspaces}
	Let $\reg, \rate \in \R$ and $0< p \leq \infty$. Fix an integer $r_0 > \max ( \tau , d(1 / p - 1)_+ - \tau )$ and consider a family of Daubechies wavelets $\{ \psi_{j,\bm{G},\bm{k}} \}_{(j,\bm{G},\bm{k})\in \N \times \G^j \times \Z^d}$, where $\psi_M$ and $\psi_F$ have at least $r_0$ continuous derivatives and $\psi_M$ has vanishing moments up to order at least $(r_0-1)$.
	The \emph{weighted Besov space} $B_p^{\reg}(\R^d;\rate)$ is the collection of tempered generalized functions $ f \in \S'(\R^d)$ that can be written as
	\begin{equation} \label{eq:fconvuncond}
		f =   \sum_{j \in \N}   \sum_{\bm{G} \in\G^j}  \sum_{\bm{k}\in \Z^d}  c_{j,\bm{G},\bm{k}} \psi_{j,\bm{G},\bm{k}},
	\end{equation}
	where the $c_{j,\bm{G},\bm{k}}$ satisfy
	\begin{equation}
	 \sum_{j \in \N} 2^{j(\reg p -d + \frac{dp}{2})} 
	 	\sum_{\bm{G} \in\G^j}
		\sum_{\bm{k}\in \Z^d} \langle 2^{-j} \bm{k}  \rangle^{\rate p}  \lvert c_{j,\bm{G},\bm{k}} \rvert^p < \infty
		\end{equation}
and  	where the convergence \eqref{eq:fconvuncond} holds  on $\S'(\R^d)$. The usual adaptation is made for $p=\infty$; that is,
 	\begin{equation}
		\sup_{j \in \N} 2^{j ( \tau + \frac{d}{2} )} 
	 	\sup_{\bm{G} \in\G^j}
		\sup_{\bm{k}\in \Z^d} \langle 2^{-j} \bm{k}  \rangle^{\rate  }  \lvert c_{j,\bm{G},\bm{k}} \rvert  < \infty.
		\end{equation}
\end{definition}

The integer $r_0$ in Definition \ref{def:besovspaces} is chosen such that the mother wavelet has enough vanishing moments and the mother and father wavelets are regular enough to be applied to a function of $B_p^{\reg}(\R^d;\rate)$. We refer the reader to~\cite[Section 1.2.1]{Triebel2008function} and references therein for more details about the role of the smoothness and the vanishing moments of Daubechies wavelets.
When the convergence \eqref{eq:fconvuncond} occurs, the duality product $\langle f, \psi_{j,\bm{G},\bm{k}} \rangle$ is well defined and we have $c_{j,\bm{G},\bm{k}} = \langle f, \psi_{j,\bm{G},\bm{k}} \rangle$. Moreover, for $p < \infty$, the quantity
\begin{equation} \label{eq:besovnorm}
	\lVert f \rVert_{B_p^\reg(\R^d;\rate)} := \left(  \sum_{j \in \N} 2^{j(\reg p -d + \frac{dp}{2})} 
	 	\sum_{\bm{G} \in\G^j}
		\sum_{\bm{k}\in \Z^d} \langle 2^{-j} \bm{k}  \rangle^{\rate p}  \lvert \langle f , \psi_{j,\bm{G},\bm{k}} \rangle \rvert^p \right)^{1/p}	
\end{equation}
is finite for any $f \in B_p^\tau(\R^d;\rate)$ and specifies a norm (a quasi-norm, respectively) on the space $ B_p^\tau(\R^d;\rate)$, with $p\geq 1$ ($p<1$, respectively). The space $ B_p^\tau(\R^d;\rate)$ is a Banach (a quasi-Banach, respectively) for this norm (quasi-norm, respectively)~\cite[Theorem 1.26]{Triebel2008function}. For $p = \infty$, \eqref{eq:besovnorm} becomes
\begin{equation} \label{eq:pinftynorm}
\lVert f \rVert_{B_\infty^\reg(\R^d;\rate)} :=\sup_{j \in \N} 2^{j(\reg  + \frac{d }{2})} 
	 	\sup_{\bm{G} \in\G^j}
		\sup_{\bm{k}\in \Z^d} \langle 2^{-j} \bm{k}  \rangle^{\rate}  \lvert \langle f , \psi_{j,\bm{G},\bm{k}} \rangle \rvert,
\end{equation}
and $ B_\infty^\tau(\R^d;\rate)$ is a Banach space for this norm. 

\begin{proposition}[Embeddings between weighted Besov spaces] \label{prop:besovembed}
	Let $0 < p_0 \leq p_1 \leq \infty$ and 		
		$\reg_0 , \reg_1 , \rate_0 , \rate_1 \in \R$.
	\begin{itemize}
		\item We have the embedding $B_{p_0}^{\reg_0}(\R^d;\rate_0) \subseteq B_{p_1}^{\reg_1}(\R^d;\rate_1)$ as soon as
		\begin{equation} \label{eq:embedd1}
		{	\reg_0 - \reg_1 \geq \frac{d}{p_0} - \frac{d}{p_1}  \ \text{ and } \ \rate_0 \geq  \rate_1.}
		\end{equation}
		\item We have the embedding $B_{p_1}^{\reg_1}(\R^d;\rate_1) \subseteq B_{p_0}^{\reg_0}(\R^d;\rate_0)$ as soon as
		\begin{equation} \label{eq:embedd2}
			{\rate_1 - \rate_0 > \frac{d}{p_0} - \frac{d}{p_1}  \ \text{ and } \ \reg_1 > \reg_0.}
		\end{equation}
	\end{itemize}	
\end{proposition}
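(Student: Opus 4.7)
My plan is to work entirely in wavelet coordinates (Definition \ref{def:besovspaces}) and reduce each embedding to a scale-by-scale sequence inequality on the coefficients $c_{j,G,\bm{k}} = \langle f, \psi_{j,G,\bm{k}} \rangle$. Setting
\[
N_{p,\reg,\rate}(j,G) := 2^{j(\reg + d/2 - d/p)} \left( \sum_{\bm{k} \in \Z^d} \langle 2^{-j}\bm{k}\rangle^{\rate p} \lvert c_{j,G,\bm{k}}\rvert^p \right)^{1/p},
\]
the Besov (quasi-)norm $\lVert f \rVert_{B_p^\reg(\R^d;\rate)}$ is exactly the $\ell^p$ norm of the sequence $(N_{p,\reg,\rate}(j,G))_{j \geq 0,\, G \in \G^j}$. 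In each of the two bullets I aim to produce a pointwise comparison
\[
N_{\mathrm{target}}(j,G) \leq 2^{-j\delta}\, N_{\mathrm{source}}(j,G)
\]
for some $\delta > 0$ produced by the strict gap in the hypothesis, and then close the argument by summing over $(j,G)$ with the elementary monotonicity $\lVert \cdot \rVert_{\ell^{p_1}} \leq \lVert \cdot \rVert_{\ell^{p_0}}$ (valid since $p_0 \leq p_1$).

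For the first embedding, at a fixed scale I apply $\ell^{p_0}(\Z^d) \hookrightarrow \ell^{p_1}(\Z^d)$ to the weighted sequence $(\langle 2^{-j}\bm{k}\rangle^{\rate_0} c_{j,G,\bm{k}})_{\bm{k}}$, combined with the pointwise estimate $\langle 2^{-j}\bm{k}\rangle^{\rate_1 p_1} \leq \langle 2^{-j}\bm{k}\rangle^{\rate_0 p_1}$ (from $\rate_0 \geq \rate_1$ and $\langle \cdot \rangle \geq 1$). Tracking the prefactor powers of $2^j$ yields the desired comparison with $\delta = (\reg_0 - \reg_1) - (d/p_0 - d/p_1) > 0$.

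For the second embedding, I go the opposite way at a fixed scale via Hölder's inequality with conjugate exponents $p_1/p_0$ and $p_1/(p_1-p_0)$ applied to the splitting $\langle 2^{-j}\bm{k}\rangle^{\rate_0 p_0} \lvert c \rvert^{p_0} = (\langle 2^{-j}\bm{k}\rangle^{\rate_1} \lvert c \rvert )^{p_0} \cdot \langle 2^{-j}\bm{k}\rangle^{(\rate_0 - \rate_1) p_0}$. The first Hölder factor reconstructs the source inner sum, while the second is a purely polynomial sum $\sum_{\bm{k}} \langle 2^{-j}\bm{k}\rangle^{(\rate_0 - \rate_1) p_0 p_1/(p_1-p_0)}$; a change of variable $\bm{y} = 2^{-j}\bm{k}$ converts it into a Riemann sum whose size in $j$ is dictated by the integrability threshold at $-d$. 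The hypothesis on $\rate_0 - \rate_1$ is exactly what controls this factor (up to an overall power $2^{jd(p_1-p_0)/p_1}$ coming from the change of variable), and the remaining gap $\reg_0 > \reg_1$ absorbs that growth so that the net exponent of $2^j$ is strictly negative.

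The main obstacle is bookkeeping: collecting the many powers of $2^j$ coming from the prefactor $2^{j(\reg p - d + dp/2)}$, from the change of variable in the polynomial weight, and from the Hölder exponents, and verifying that the final exponent is strictly negative under the stated strict gaps. Once that accounting is done, each case of the proposition reduces to the elementary fact that $\sum_{j \geq 0} 2^{-j\alpha}$ is finite for $\alpha > 0$ together with $\ell^{p_0} \hookrightarrow \ell^{p_1}$ applied to the outer index $(j,G)$. The endpoint values $p = \infty$ are handled by the usual sup-convention; no new technique beyond the standard wavelet-based proof of Besov embeddings (as in Triebel) is required.
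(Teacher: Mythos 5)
The paper does not actually prove Proposition \ref{prop:besovembed}: it delegates \eqref{eq:embedd1} to \cite[Section 4.2.3]{Edmunds2008function} and \eqref{eq:embedd2} to \cite[Section 2.2.2]{Fageot2017multidimensional}. Your self-contained wavelet-coordinate argument is therefore necessarily a different route, and for the first bullet it is correct and complete in outline: combining $\ell^{p_0}(\Z^d)\hookrightarrow\ell^{p_1}(\Z^d)$ at fixed scale with the monotonicity of $\langle\cdot\rangle^{\rate}$ in $\rate$ (since $\langle\cdot\rangle\geq 1$) gives $N_{p_1,\reg_1,\rate_1}(j,G)\leq 2^{-j\delta}N_{p_0,\reg_0,\rate_0}(j,G)$ with $\delta=(\reg_0-\reg_1)-(d/p_0-d/p_1)>0$, and the outer inequality $\lVert\cdot\rVert_{\ell^{p_1}}\leq\lVert\cdot\rVert_{\ell^{p_0}}$ closes the argument (you in fact only need $\rate_0\geq\rate_1$ here).

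The second bullet contains a genuine problem, although it originates in the statement rather than in your method. Carry your own H\"older step to the end: the polynomial factor is $\sum_{\bm{k}}\langle 2^{-j}\bm{k}\rangle^{(\rate_0-\rate_1)p_0p_1/(p_1-p_0)}$, which is finite only if $(\rate_0-\rate_1)\frac{p_0p_1}{p_1-p_0}<-d$, that is, only if $\rate_1-\rate_0>\frac{d}{p_0}-\frac{d}{p_1}$; and after inserting the resulting bound $S_j\leq C\,2^{jd}$ for that factor, the net power of $2^j$ in the scale-by-scale comparison is $p_0(\reg_0-\reg_1)$, which is negative only if $\reg_1>\reg_0$. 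Both requirements are the \emph{opposite} of the literal hypotheses \eqref{eq:embedd2}: under $\rate_0-\rate_1>\frac{d}{p_0}-\frac{d}{p_1}\geq 0$ the lattice sum diverges, and under $\reg_0>\reg_1$ the exponent is positive, so the sums you need to control are infinite. (The literal \eqref{eq:embedd2} is already inconsistent for $p_0=p_1$, where it would assert $B_p^{\reg_1}(\R^d;\rate_1)\subseteq B_p^{\reg_0}(\R^d;\rate_0)$ with $\reg_0>\reg_1$ and $\rate_0>\rate_1$, contradicting the elementary inclusions recalled in the introduction; the indices $0$ and $1$ are evidently interchanged there, as Figure \ref{fig:embeddings} and the paper's own uses of the proposition confirm.) Since your write-up asserts that the stated hypotheses are ``exactly what controls'' these factors, you have either silently corrected the statement or made a sign slip; you must state explicitly that the hypotheses you actually use are $\rate_1-\rate_0>\frac{d}{p_0}-\frac{d}{p_1}$ and $\reg_1>\reg_0$. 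Separately, for this bullet the outer sum over $(j,G)$ cannot be closed by $\lVert\cdot\rVert_{\ell^{p_1}}\leq\lVert\cdot\rVert_{\ell^{p_0}}$, which points the wrong way (the target norm is now the $\ell^{p_0}$ one): you need a second H\"older inequality in $(j,G)$ with the same exponents, or more simply the crude bound $N_{p_1,\reg_1,\rate_1}(j,G)\leq\lVert f\rVert_{B_{p_1}^{\reg_1}(\R^d;\rate_1)}$ combined with the summability of $2^{-jp_0(\reg_1-\reg_0)}$ over $j$ and the uniform bound $\mathrm{Card}(\G^j)\leq 2^d$.
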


	{The embedding for the conditions \eqref{eq:embedd1} was proved by D.E. Edmunds and H. Triebel~\cite[Equation (9), Section 4.2.3]{Edmunds2008function} for general weights. The embedding for the conditions \eqref{eq:embedd2} was obtained in \cite[Section 2.2.2]{Fageot2017multidimensional}. Note that \eqref{eq:trivialembed} is deduced from  \eqref{eq:embedd1} by taking $p_0=p_1=p$.}
	The embedding relations are summarized in the two Triebel diagrams\footnote{The representation of the smoothness properties in diagrams with axis $(1/p,\tau)$ is inherited from the work of H. Triebel. It is very convenient because the smoothness has often a simple formulation in terms of $1/p$, as appear typically in our Theorem~\ref{maintheo:regdecaylimit}. This is also valid for the asymptotic rate $\rho$.} of Figure \ref{fig:embeddings}. 
	
\begin{figure}[t!] 
\centering
\begin{subfigure}[b]{0.30\textwidth}
\begin{tikzpicture}[x=2cm,y=2cm,scale=0.34]

\fill[green, opacity= 0.3] (0,-1) -- (2,1) -- (3,1) -- (3,-3/2) -- (0,-3/2) -- cycle; 
\draw[green, thick, dashed,->](2,1) --(3,1);
\draw[green, thick, dashed,- ](0,-1) -- (2,1);
\draw[green, thick,->](0,-1) -- (0,-3/2);

\fill[red, opacity= 0.3] (0,1) -- (2,1) -- (3,2) -- (0,2) -- cycle; 
\draw[red, thick, dashed,->](2,1) --(3,2);
\draw[red, thick, dashed,- ](2,1) -- (0,1);
\draw[red, thick,->](0,1) -- (0,2);

\draw[thick, ->] (0,0)--(3,0) node[circle,right] {$\frac{1}{p}$} ;
\draw[thick, ->] (0,-3/2)--(0,2) node[circle,above] {$\reg$} ;

\draw[ thick,color=black]
(-0.05,1) -- (0.05,1)  node[black,left] { $\reg_0$};

\draw[ thick,color=black]
(2,-0.05) -- (2,0.05)  node[black,below] { $\frac{1}{p_0}$};

\end{tikzpicture}
\caption{The $(1/p,\reg)$-diagram for fixed $\rate_0$.}
\end{subfigure}
\begin{subfigure}[b]{0.30\textwidth}
\begin{tikzpicture}[x=2cm,y=2cm,scale=0.34]

\fill[green, opacity= 0.3] (0,1) -- (1,1) -- (3,-1) -- (3,-3/2) -- (0,-3/2) -- cycle; 
\draw[green, thick, dashed,->](1,1) --(3,-1);
\draw[green, thick, dashed,- ](0,1) -- (1,1);
\draw[green, thick,->](0,1) -- (0,-3/2);

\fill[red, opacity= 0.3] (0,2) -- (1,1) -- (3,1) -- (3,2) -- cycle; 
\draw[red, thick, dashed,->](1,1) --(3,1);
\draw[red, thick, dashed,- ](0,2) -- (1,1);

\draw[thick, ->] (0,0)--(3,0) node[circle,right] {$\frac{1}{p}$} ;
\draw[thick, ->] (0,-3/2)--(0,2) node[circle,above] {$\rate$} ;

\draw[ thick,color=black]
(-0.05,1) -- (0.05,1)  node[black,left] { $\rate_0$};

\draw[ thick,color=black]
(1,-0.05) -- (1,0.05)  node[black,below] { $\frac{1}{p_0}$};

\end{tikzpicture}
\caption{The $(1/p,\rate)$-diagram for fixed $\tau_0$.}
\end{subfigure}

\caption{Representation of the embeddings between Besov spaces: If $f \in B_{p_0}^{\reg_0}(\R^d;\rate_0)$, then $f$ is in every Besov space that is in the lower shaded green regions. Conversely, if $f \notin B_{p_0}^{\reg_0}(\R^d;\rate_0)$, then $f$ is in none of the Besov spaces of the upper shaded red  regions.}
\label{fig:embeddings}
\end{figure}
	
	As a simple example, we obtain the Besov localization of the Dirac distribution. This result is of course well-known (an alternative proof can be found for instance in \cite{schmeisser87}) but we provide a new proof for two reasons: (1) it illustrates how to use the wavelet-based characterization of Besov spaces and (2) the result will be used to obtain sharp results for compound Poisson white noises.
	
	\begin{proposition} \label{prop:DiracBesov}
	Let $0< p < \infty$, $\tau \in \R$, and $\rho \in \R$.
 	Then, the Dirac impulse $\delta$ is in $B_p^\reg(\R^d;\rate)$ if and only if $\reg < \frac{d}{p} - d$. Moreover, $\delta \in B_{\infty}^{\tau}(\R^d; \rho)$ if and only if $\tau \leq - d$. 
	\end{proposition}
	
	We remark that the weight $\rho \in \R$ plays no role in Proposition \ref{prop:DiracBesov}. This is a simple consequence of the fact that $\delta$ is compactly supported, and therefore insensitive to the weight, as will appear in the proof.
	
	\begin{proof}[Proof of Proposition \ref{prop:DiracBesov}]
		We first treat the case $p < \infty$.
		The wavelet coefficients of $\delta$ are $c_{j,\bm{G},\bm{k}} = 2^{jd/2} \psi_{\bm{G}}(-\bm{k})$, hence 
		the Besov (quasi-)norm of the Dirac impulse is given by
		\begin{equation} \label{eq:normdirac}
			\lVert \delta \rVert^p_{B_p^\reg(\R^d;\rate)} 
			= 
			\sum_{j \in \N} 2^{j(\reg p - d + dp)} \sum_{\bm{G} \in\G^j} \sum_{\bm{k}\in \Z^d} \langle 2^{-j} \bm{k} \rangle^{\rate p} \lvert \psi_{\bm{G}}(- \bm{k}) \rvert^p.
		\end{equation}
		
		We first introduce some notations. 
		The $2^d$ wavelets $\psi_{\bm{G}}$ with gender $\bm{G}$ describing $\G^0$ are bounded, hence the constant $b = \max_{\bm{G} \in\G^0} \lVert \psi_{\bm{G}} \rVert_\infty$ is finite. We denote by $K$ the set of multi-integers $\bm{k} \in  \Z^d$ such that $\psi_{\bm{G}}(-\bm{k}) \neq 0$ for some gender $\bm{G} \in \mathcal{G}^0$. The set $K$ is finite because the $2^d$ wavelets are compactly supported and we set $n_0= \mathrm{Card} (K)$. Moreover, the set $K$ is non empty; otherwise, \eqref{eq:normdirac} would imply that   $\lVert \delta  \rVert^p_{B_p^\reg(\R^d;\rate)}  = 0$, hence $\delta = 0$, which is absurd. We fix some element $\bm{k}_0 \in K$ and $\bm{G}_0$ a gender such that $\psi_{\bm{G}_0}(-\bm{k}_0) \neq 0$. We set $a = \lvert \psi_{\bm{G}_0}(-\bm{k}_0) \rvert > 0$.
	Then, there exists a constant $M > 0$ such that $\lVert 2^{-j} \bm{k}\rVert \leq M$ for any $j \in \N$ and $(-\bm{k}) \in K$.
		In particular, for such $\bm{k}$ and $j$, we have that $1 \leq \langle 2^{-j} \bm{k} \rangle = (1 + \lVert 2^{-j} \bm{k} \rVert^2)^{1/2} \leq \langle M \rangle$, and therefore,
		\begin{equation}
			0 < \min(1,\langle M \rangle^{\rho p}) \leq    \langle 2^{-j} \bm{k} \rangle^{\rate p}  \leq  \max(1,\langle M \rangle^{\rho p})  < \infty.
		\end{equation}	
	Fix $j\geq 0$. 
	Then,  we have the lower bound
	\begin{equation}\label{eq:Afordirac}
	\sum_{\bm{G} \in\G^j} \sum_{\bm{k}\in \Z^d} \langle 2^{-j} \bm{k} \rangle^{\rate p} \lvert \psi_{\bm{G}}(- \bm{k}) \rvert^p \geq \langle 2^{-j} \bm{k}_0\rangle^{\rho p} a^p \geq  A:=  \min(1,\langle M \rangle^{\rho p}) a^p.
	\end{equation} 
	We recall that $\mathrm{Card}(\G^j) \leq 2^d$. Then, we also have the upper bound
	\begin{equation}\label{eq:Bfordirac}
	\sum_{\bm{G} \in \G^j} \sum_{\bm{k}\in \Z^d} \langle 2^{-j} \bm{k} \rangle^{\rate p} \lvert \psi_{\bm{G}}(- \bm{k}) \rvert^p \leq
	\sum_{\bm{G} \in \G^j} \sum_{\bm{k}\in K} \langle 2^{-j} \bm{k} \rangle^{\rate p} b^p
	\leq B:= 2^d n_0 \max(1,\langle M \rangle^{\rho p}) b^p.
	\end{equation} 	
	Combining \eqref{eq:Afordirac} and \eqref{eq:Bfordirac}, we therefore deduce that
	\begin{equation}
			A \sum_{j \in \N} 2^{j (\reg p - d + dp)} \leq \lVert \delta \rVert^p_{B_p^\reg(\R^d;\rate)} \leq B  \sum_{j \in \N} 2^{j (\reg p - d + dp)}.
		\end{equation}
		The sum converges for $(\reg p  - d + dp)  < 0$ and diverges otherwise, implying the result. \\
		
		We now adapt the argument to the case $p = \infty$ for which the Besov norm is given by  
		\begin{equation}
		\lVert \delta \rVert_{B_\infty^\reg(\R^d;\rate)}  = \sup_{j \in \N} 2^{j (\tau+ d)} \sup_{\bm{G} \in \mathcal{G}^j} \sup_{\bm{k}\in \Z^d} \langle 2^{-j} \bm{k} \rangle^\rho \lvert \psi_{\bm{G}} (-\bm{k} ) \rvert. 
		\end{equation}
		We have that, for any $j \in \N$, 
		\begin{equation}
			A' := a \min (1, \langle M \rangle^\rho) 
			\leq
			\sup_{\bm{G} \in \mathcal{G}^0} \sup_{\bm{k}\in \Z^d} \langle 2^{-j} \bm{k} \rangle^\rho \lvert \psi_{\bm{G}}(-\bm{k}) \rvert
			\leq 
			B' := b \max (1, \langle M \rangle^\rho). 
		\end{equation}
		Therefore, we deduce that
			\begin{equation}
			A' \sup_{j \in \N} 2^{j (\reg  + d )} \leq \lVert \delta \rVert_{B_\infty^\reg(\R^d;\rate)} \leq B'  \sup_{j \in \N} 2^{j (\reg  + d)},
		\end{equation}
		and $\lVert \delta \rVert_{B_\infty^\reg(\R^d;\rate)} < \infty$ if and only if $\tau + d \leq 0$. 		
	\end{proof}
	
\section{Gaussian White Noise} \label{sec:Gaussian}

Our goal in this section is to prove the Gaussian part of Theorem \ref{maintheo:regdecaylimit}.
Without loss of generality, we focus on the Gaussian white noise with zero mean and unit variance. 

The Gaussian case is much simpler than the general one since the wavelet coefficients of the Gaussian white noise are independent and identically distributed.
We present it separately for three reasons: (i) it can be considered as an instructive toy problem that already contains  some of the technicalities that will appear for the general case; (ii) it cannot be deduced from the other sections, where the results are based on a careful study of the L\'evy measure; 
and (iii) the localization of the Gaussian white noise in \emph{weighted} Besov spaces has not been addressed in the literature, to the best of our knowledge.
We first state three simple lemmata that will be useful throughout the paper.

{\begin{lemma} \label{lemma:sumiidnew}
	Let $(N_k)_{k\geq 1}$ be a sequence such that $N_k \rightarrow \infty$ when $k \rightarrow \infty$.
	Assume that we have i.i.d. random variables $Y_k^i$ with $k \geq 1$ and $1 \leq i \leq N_k$ such that $Y_k^i \geq 0$ and $\mathscr{P}( Y_k^i  > 0) > 0$. 	
	Let $(Z_k)_{k\geq 1}$ be the family of random variables such that
	$	Z_k = \frac{1}{N_k} \sum_{i=1}^{N_k} Y_{k}^{i}$.
	Then, $\sum_{k\geq 1} Z_k =   \infty$ a.s. \\
\end{lemma}}

\begin{proof}
 Let $\mu = \mathbb{E}[Y_{k}^{i}] \in (0, \infty]$. If $\mu = \infty$, we set $\tilde{Y}_{k}^{i} =  \min (Y^i_k, 1)$. Then, the $\tilde{Y}_{k}^{i}$ are i.i.d., non-negative with $\mathscr{P}( \tilde{Y}_k^i  > 0) > 0$, and such that $\tilde{\mu} = \mathbb{E}[ \tilde{Y}_{k}^{i} ] \leq 1 < \infty$. 
 Moreover, we have that
 \begin{equation} \label{eq:petiteborne}
 \sum_{k\geq 1} Z_k
 =   \sum_{k\geq 1}\frac{1}{N_k} \sum_{i=1}^{N_k} Y_{k}^{i} \geq 
  \sum_{k\geq 1}\frac{1}{N_k} \sum_{i=1}^{N_k} {\tilde{Y}}_{k}^{i}
  =  \sum_{k\geq 1} \tilde{Z}_k,
 \end{equation}
 where we set $\tilde{Z}_k = \frac{1}{N_k} \sum_{i=1}^{N_k} {\tilde{Y}}_{k}^{i}$.
Due to \eqref{eq:petiteborne}, it is then sufficient to demonstrate Lemma \ref{lemma:sumiidnew} for $\mu < \infty$, which we do now.
 
 Let $(W_k)_{k\geq 1}$ be a family of i.i.d. random variables whose common law is the one of the $Y_{k}^{i}$ and define $\overline{W}_k = \frac{1}{N_k} \sum_{i=1}^{N_k} W_i$. 
 The weak law of large numbers implies that $\mathscr{P} ( | \overline{W}_k - \mu | \geq x)$ vanishes when $k \rightarrow \infty$ for any $x>0$. Taking $x = \mu/2$, we readily deduce that $\mathscr{P}( \overline{W}_k \geq \mu/2)$ goes to $1$ when $k\rightarrow \infty$.
 Moreover, we have the equality $Z_k \overset{(\mathcal{L})}{=} \overline{W}_k$, therefore, we also have that 
 \begin{equation}
 \mathscr{P}( Z_k \geq \mu/2) \underset{k\rightarrow \infty}{\longrightarrow} 1.
 \end{equation}
 This implies in particular that $\sum_{k\geq1}  \mathscr{P}( Z_k \geq \mu/2) = \infty$. The events $\{Z_k \geq \mu/2\}$ are moreover independent  due to the independence of the $Y^i_k$. Using the Borel-Cantelli lemma, we deduce that $Z_k \geq \mu/2$ for infinitely many $k$ a.s. An obvious consequence is then that $\sum_{k\geq 1} Z_k =  \infty$ a.s..
\end{proof}
		
As a consequence of Lemma \ref{lemma:sumiidnew}, we deduce Lemma \ref{lemma:sumiid}.
		
\begin{lemma} \label{lemma:sumiid}		
	Assume   that $(X_{\bm{k}})_{\bm{k} \in \Z^d}$, is a sequence of i.i.d. random variables such that $\mathscr{P}( |X_{\bm{k}}| > 0) > 0$.
	 Then,
	\begin{equation} \label{eq:explosemoica}
		\sum_{\bm{k} \in \Z^d} \frac{\lvert X_{\bm{k}} \rvert}{\langle \bm{k}\rangle^d} =  \infty \text{ a.s.}
	\end{equation}
\end{lemma}

\begin{proof}
	First of all, the result for any dimension $d$ is easily deduced from the one-dimensional case. Moreover, $\lvert k \rvert$ and $\langle k \rangle$ are equivalent asymptotically, hence it is equivalent to show that $\sum_{k \geq 1} \frac{\lvert X_k \rvert}{k} = \infty$ for $X_k$ i.i.d.  with $\mathscr{P}( |X_k| > 0) > 0$. {Setting $Z_k = \frac{1}{2^{k-1}} \sum_{\ell =2^{k-1}}^{2^{k}-1} \lvert X_\ell \rvert$ for $k \geq 1$, we deduce \eqref{eq:explosemoica} by applying Lemma~\ref{lemma:sumiidnew} with $N_k = 2^{k-1}$ and $Y^i_k = X_{2^{k-1} + i-1}$ and observing that $\sum_{k \geq 1} \frac{\lvert X_k \rvert}{k} \geq \frac{1}{2} \sum_{k\geq 1} Z_k = \infty$.}
\end{proof}

Finally, we state the last lemma that deals with supremum of i.i.d. sequences of random variables.

\begin{lemma}\label{lemma:third}
Let $(X_k)_{k \geq 1}$ be a sequence of i.i.d. random variables such that $\mathscr{P}(|X_k| \geq M) > 0$ for every $M \geq 0$. Then, we have that, almost surely,
\begin{equation} \label{eq:trucsimple}
	\sup_{k\geq 1} |X_k| = \infty.
\end{equation}
\end{lemma}

\begin{proof}
Let $M > 0$. The assumption $\mathscr{P}(|X_k| \geq M) > 0$ and the fact that the events $\{|X_k| \geq M\}$ are independent implies, thanks to the Borel-Cantelli lemma, that there exists almost surely (infinitely many) $k \geq 1$ such that $|X_k| \geq M$. Hence, $\sup_{k\geq 1} |X_k| \geq M$ almost surely. This being true for every $M >0$, we deduce \eqref{eq:trucsimple}.
\end{proof}

We characterize the Besov regularity of the Gaussian white noise in Proposition \ref{prop:GaussBesov}. 

\begin{proposition} \label{prop:GaussBesov}
	Fix $0< p \leq \infty$ and $\tau, \rho \in \R$.	
	The Gaussian white noise $w$ is 
	\begin{itemize}
	\item almost surely in $B_p^\reg(\R^d;\rate)$ if $\reg < - d/2$ and $\rate < - d /p$, and 
	\item almost surely not in $B_p^\reg(\R^d;\rate)$ if $\reg \geq  - d/2$ or $\rate \geq - d /p$.
	\end{itemize}
\end{proposition}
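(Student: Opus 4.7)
The starting observation is that since the Daubechies wavelets $\{\psi_{j,G,\bm{k}}\}$ form an orthonormal basis of $L_2(\R^d)$ and $w$ is a unit-variance Gaussian white noise, the wavelet coefficients $Z_{j,G,\bm{k}} := \langle w, \psi_{j,G,\bm{k}}\rangle$ are i.i.d.\ standard Gaussian random variables. Everything that follows is built on this independence structure, which is the special feature that makes the Gaussian case so much cleaner than the general L\'evy case.

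For the positive direction with $0 < p < \infty$, the plan is to compute $\mathbb{E}[\|w\|_{B_p^\tau(\R^d;\rho)}^p]$ directly by Fubini (legal since every summand is non-negative). Writing $c_p := \mathbb{E}[|Z|^p] < \infty$ for $Z \sim \mathcal{N}(0,1)$, the expectation factors as $c_p \sum_{j\geq 0} 2^{j(\tau p - d + dp/2)}\,|\G^j|\, \sum_{\bm{k}\in\Z^d} \langle 2^{-j}\bm{k}\rangle^{\rho p}$. The inner $\bm{k}$-sum is a Riemann sum comparable to $2^{jd}\int_{\R^d}\langle \bm{x}\rangle^{\rho p}\,\drm \bm{x}$, which converges precisely under $\rho p < -d$. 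Substituting this bound, the remaining $j$-series becomes $\sum_j 2^{j(\tau p + dp/2)}$, which converges iff $\tau < -d/2$. The case $p=\infty$ is then covered by the embedding \eqref{eq:embedd1} of Proposition \ref{prop:besovembed}: for any admissible target pair $(\tau,\rho)$ with $\tau<-d/2$ and $\rho<0$, one selects $p'$ large enough and intermediate parameters $\tau',\rho'$ so that $B_{p'}^{\tau'}(\R^d;\rho') \subseteq B_\infty^\tau(\R^d;\rho)$ and the containment in $B_{p'}^{\tau'}(\R^d;\rho')$ is already established.

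For the negative direction when $\tau \geq -d/2$, the strategy mimics \eqref{eq:principlesmoothness}: I discard every gender but one mother gender $M^d$ and truncate $\bm{k}$ to the rescaled cube $\{0\leq k_i < 2^j\}$, whose image under $2^{-j}$ lies in $[0,1]^d$ so that the weight $\langle 2^{-j}\bm{k}\rangle^{\rho p}$ is bounded below by a positive constant uniformly in $j$. This yields $\|w\|_{B_p^\tau(\R^d;\rho)}^p \geq C\sum_j 2^{j(\tau p - d + dp/2)} S_j$ with $S_j := \sum_{\bm{k}} |Z_{j,M^d,\bm{k}}|^p$. The $S_j$ are independent across $j$ and each is a sum of $2^{jd}$ i.i.d.\ copies of $|Z|^p$ with mean $c_p\,2^{jd}$, so by the weak law of large numbers $\mathscr{P}(S_j \geq c_p\,2^{jd}/2) \to 1$. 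The second Borel--Cantelli lemma (applied to these independent events) then forces the bound $S_j \geq c_p 2^{jd}/2$ to hold for infinitely many $j$ almost surely. On those $j$'s the summand is at least a constant times $2^{j(\tau p + dp/2)}$, which does not vanish as $j\to\infty$ when $\tau\geq -d/2$, so the sum diverges. For the negative direction when $\rho \geq -d/p$, I keep only the scale-zero father-wavelet contribution as in \eqref{eq:principledecay}, observe that $\langle \bm{k}\rangle^{\rho p} \geq \langle \bm{k}\rangle^{-d}$ under the hypothesis, and invoke Lemma \ref{lemma:sumiid} applied to the i.i.d.\ nonzero variables $|Z_{0,F^d,\bm{k}}|^p$.

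The $p=\infty$ negative case is handled separately but by the same ideas: if $\rho \geq 0$ then $\sup_{\bm{k}} |Z_{0,F^d,\bm{k}}|=\infty$ almost surely because the supremum of infinitely many i.i.d.\ standard Gaussians is unbounded; if $\tau \geq -d/2$ then within each fixed cube of $\bm{k}$'s the maximum of $2^{jd}$ i.i.d.\ standard Gaussians grows like $\sqrt{j}$, and the prefactor $2^{j(\tau+d/2)}$ is non-decreasing in $j$, so the supremum over $j$ diverges. I expect the main technical point to be the independence-plus-Borel--Cantelli step for the smoothness lower bound, where one must carefully justify that the independent tail events $\{S_j \geq c_p\,2^{jd}/2\}$ not only occur with asymptotic probability $1$ but, via their independence, occur infinitely often almost surely; everything else is routine counting and Riemann-sum estimates.
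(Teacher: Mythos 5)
Your proof is correct and follows essentially the same route as the paper's: i.i.d.\ standard Gaussian wavelet coefficients, a Fubini--Riemann-sum computation of $\mathbb{E}[\lVert w\rVert_{B_p^\tau(\R^d;\rho)}^p]$ for the positive direction, the weak law of large numbers plus the second Borel--Cantelli lemma on the independent block sums $S_j$ for $\tau\geq -d/2$, and Lemma \ref{lemma:sumiid} applied to the scale-zero father-wavelet coefficients for $\rho\geq -d/p$. The only difference is that you spell out the $p=\infty$ case explicitly (via embeddings for the positive part and Gaussian extreme-value bounds for the negative part), which the paper's proof leaves implicit; this is a welcome extra rather than a deviation.
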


A direct consequence of Proposition \ref{prop:GaussBesov} is Corollary \ref{coro:GaussBesov}, where we identify the  local smoothness and the  asymptotic growth rate of the Gaussian white noise, that are defined for (deterministic) generalized functions in Section \ref{sec:taurho}.

\begin{corollary} \label{coro:GaussBesov}
	Let $w$ be a Gaussian white noise and $0 < p \leq \infty$. Then, we have almost surely that
	\begin{equation} \label{eq:taurhogauss}
	\tau_p(w) = -\frac{d}{2} \quad \text{ and } \quad \rho_p(w) = - \frac{d}{p}.
	\end{equation}
\end{corollary}

\textit{Remark.} The determination of the local smoothness and the asymptotic growth rate is insensitive to the fact that the generalized function $f$ is or is not in the critical space $B_p^{\tau_p(f)}(\R^d;\rho_p(f))$. For the Gaussian white noise, Proposition \ref{prop:GaussBesov} implies that (almost surely)  $w \notin B_p^{\tau_p(w)}(\R^d;\rho_p(w))$ for every $0 < p \leq \infty$. In that sense, Proposition \ref{prop:GaussBesov} contains more information than Corollary \ref{coro:GaussBesov}, since we cannot deduce the critical cases treated in the proposition from the result of the corollary.

\begin{proof}[Proof of Proposition \ref{prop:GaussBesov}]
Recall that we restrict, without loss of generality, to Gaussian white noise with unit variance $\sigma^2 = 1$. Then, $\langle w, \varphi_1 \rangle$ and $\langle w, \varphi_2\rangle$ are independent if and only if $\langle \varphi_1 ,\varphi_2 \rangle = 0$. Moreover, $\langle w, \varphi \rangle$ is a Gaussian random variable with variance $\lVert \varphi \rVert_2^2$ \cite[Section 2.5]{GelVil4}. The family of functions $	\{ \psi_{j,\bm{G},\bm{k}} \}_{(j,\bm{G},\bm{k})\in \N \times \G^j \times \Z^d}$ being orthonormal, the random variables $\langle w , \psi_{j,\bm{G},\bm{k}} \rangle$  are therefore i.i.d. with law $\mathcal{N}(0,1)$.  \\

	\textbf{Case $p < \infty$, $\reg < - d/2$, and $\rate < - d /p$.}
	For $p > 0$, we denote by $C_p$ the $p$th moment of a Gaussian random variable with zero mean and unit variance. In particular, we have that  $\mathbb{E} \left[\lvert \langle w , \psi_{j,\bm{G},\bm{k}} \rangle \rvert^p \right] = C_p$  for any $j \in \N, \bm{G} \in \mathcal{G}^j, \bm{k} \in \Z^d$, and therefore  
	\begin{align}
	\mathbb{E} \left[\|w  \|_{B_p^\reg(\mathbb{R}^d;\rate)}^p \right] 
	&= 
	 \sum_{j \in \N} 2^{j(\reg p -d + \frac{dp}{2})} 
	 	\sum_{\bm{G} \in \G^j}
		\sum_{\bm{k}\in \Z^d} \langle 2^{-j} \bm{k}  \rangle^{\rate p}  \mathbb{E} \left[\lvert \langle w , \psi_{j,\bm{G},\bm{k}} \rangle \rvert^p \right]	 
 	\nonumber   \\ 	
	& = 
	C_p \sum_{j \in \N} 2^{j(\reg p -d + \frac{dp}{2})}  \mathrm{Card}(\G^j) \sum_{\bm{k}\in \Z^d} \langle 2^{-j} \bm{k}  \rangle^{\rate p} \nonumber \\
	& \leq 2^d C_p \sum_{j \in \N} 2^{j(\reg p -d + \frac{dp}{2})}  \sum_{\bm{k}\in \Z^d} \langle 2^{-j} \bm{k}  \rangle^{\rate p}.
	\end{align}
	The last inequality is due to $\mathrm{Card} ( \G^j ) \leq 2^d$. 
	Since $\rate p < -d$ and 
	$\langle 2^{-j} \bm{k} \rangle \underset{\lVert \bm{k}\rVert \rightarrow \infty}{\sim} 2^{-j} \lVert \bm{k} \rVert $, 
	we have that 
	$\sum_{\bm{k}\in \Z^d} \langle 2^{-j} \bm{k} \rangle^{\rate p} < \infty$. 
	Moreover, we recognize a Riemann sum and have the convergence
	\begin{equation}
		2^{-jd} \sum_{\bm{k}\in \Z^d} \langle 2^{-j} \bm{k} \rangle^{\rate p}  \underset{j \rightarrow \infty}{\longrightarrow} \int_{\R^d} \langle \bm{x} \rangle^{\rate p} \drm \bm{x} < \infty.
	\end{equation}
	In particular, the series $\sum_j 2^{j(\reg p  + \frac{dp}{2})}  \left( 2^{-jd} \sum_{\bm{k}} \langle 2^{-j} \bm{k}  \rangle^{\rate p}\right)$ converges if and only if the series 
	$\sum_j 2^{j(\reg p   + \frac{dp}{2})}$ does; in other words, if and only if $\reg <  - d / 2$. Finally, if $\reg <  - d / 2$ and $\rate < - d / p$, we have  shown that $\mathbb{E}[\|w  \|_{B_p^\reg(\mathbb{R}^d;\rate)}^p]  < \infty$ and therefore $w \in {B_p^\reg(\mathbb{R}^d;\rate)}$ almost surely. \\
	
	\textbf{Case $p < \infty$ and $\reg \geq - d/2 $.}
	Then, we have $2^{j (\reg p - d + dp/2)} \geq 2^{-jd}$. 
	We aim at establishing a lower bound for the  Besov norm of $w$ and we restrict to the purely mother wavelet with gender $\bm{G} = \bm{M} = (M,\ldots, M) \in \G^j$ for any $j \in \N$. 
	For $\bm{k} = (k_1, \ldots, k_d) \in \Z^d$ such that $0 \leq k_i < 2^j$ for every $i = 1, \ldots, d$, we have that 
	\begin{equation}
	\langle 2^{-j} \bm{k} \rangle^{\rho p} \geq C := \min_{\lVert \bm{x} \rVert_\infty \leq 1} \langle \bm{x} \rangle^{\rho p}.
	\end{equation}
	For $\lVert \bm{x} \rVert_\infty \leq 1$, we have that 
	$1 \leq \langle \bm{x} \rangle = (1 + \lVert \bm{x} \rVert^2)^{1/2} \leq (1+d)^{1/2}$, hence 
	$C  \geq \min(1, (1+d)^{\rho p/2}) > 0$. 
	Then, we have
	\begin{align} \label{eq:perlinpinpin}
		\lVert w \rVert^p_{B_p^{\reg}(\R^d;\rate)} &\geq   \sum_{j \in \N} 2^{-jd} \sum_{0\leq k_1, \ldots , k_d < 2^{j}} \langle 2^{-j} \bm{k} \rangle^{\rho p} \lvert \langle w , \psi_{j, \bm{M} ,\bm{k}} \rangle \rvert^p \nonumber \\
		&\geq C  \sum_{j \in \N} 2^{-jd} \sum_{0\leq k_1, \ldots , k_d < 2^{j}}   \lvert \langle w , \psi_{j, \bm{M} ,\bm{k}} \rangle \rvert^p.
	\end{align}
	The random variables $\langle w , \psi_{j, \bm{M} ,\bm{k}} \rangle$ are i.i.d. We can therefore apply Lemma \ref{lemma:sumiidnew} with blocks of size $2^{jd}$, which goes to infinity when $j \rightarrow \infty$ to conclude that $\lVert w \rVert^p_{B_p^{\reg}(\R^d;\rate)} = \infty$ a.s. \\
	
	\textbf{Case $p < \infty$ and $\rate \geq - d/p $.}
	We retain only the father wavelet $\phi = \psi_{\bm{F}}$ where $\bm{F} = (F,\ldots ,F) \in \G^0$ and the scale $j = 0$ and exploit the relation $\rate p \geq -d$ to deduce the lower bound 
	\begin{equation} \label{eq:trucgaussianpeugaussien}
	\lVert w \rVert^p_{B_p^{\reg}(\R^d;\rate)} 
	\geq 
	\sum_{\bm{k}\in \Z^d} \langle \bm{k} \rangle^{\rate p} \lvert \langle w, \phi_{\bm{k}} \rangle \rvert^p 
	\geq
	 \sum_{\bm{k} \in \Z^d} \frac{\lvert\langle w, \phi_{\bm{k}} \rangle \rvert^p}{\langle \bm{k} \rangle^d },
	\end{equation}
	with the notation $\phi_{\bm{k}} = \phi(\cdot - \bm{k})$. 
	Finally, the random variables $\langle w, \phi_{\bm{k}} \rangle$ being i.i.d., Lemma \ref{lemma:sumiid} applies and $\lVert w \rVert^p_{B_p^{\reg}(\R^d;\rate)}  = \infty$ almost surely. \\
	
	\textbf{Case $p = \infty$, $\tau < - d/2$, and $\rho< 0$.} This case is deduced using the embeddings between Besov spaces. 
	First of all, for $\epsilon \leq \min ( - d/2 - \tau , -\rho)$, we have that 
	\begin{equation} \label{eq:trutruc}
	B_{\infty}^{-d/2- \epsilon}(\R^d;-\epsilon)  \subseteq  B_{\infty}^{-d/2- \epsilon}(\R^d;\rho) \subseteq   B_{\infty}^{\tau}(\R^d;\rho) 
	\end{equation} 
	using \eqref{eq:trivialembed} first for the weight and then for the smoothness parameters. It therefore suffices to show the existence of $0 < \epsilon \leq \min ( - d/2 - \tau , -\rho)$ such that $ w \in B_{\infty}^{-d/2- \epsilon}(\R^d;-\epsilon)$.
	Fix such an $\epsilon$. Then, for every $p < \infty$, we already proved that $w \in B_p^{-d/2-\epsilon / 2}(\R^d; - d / p - \epsilon / 2)$ a.s. Applying this to $p = 2 d / \epsilon$, we then remark that 
	\begin{equation}
	B_p^{-d/2-\epsilon / 2}(\R^d; - d / \rho - \epsilon / 2) = B_{2d / \epsilon}^{-d/2 -\epsilon /2} (\R^d; - \epsilon).
	\end{equation}
	Moreover, applying Proposition \ref{prop:besovembed} with $p_0 = 2 d / \epsilon < p_1 = \infty$, $\tau_0 = - d / 2 - \epsilon / 2$, $\tau_1 = - d/2 - \epsilon$, $\rho_0 = -d / p_0 - \epsilon /2 = - \epsilon = \rho_1$, we easily verify that \eqref{eq:embedd1} is satisfied and therefore, using also \eqref{eq:trutruc}, we have a.s. that  
	\begin{equation}
	w \in B_{2d / \epsilon}^{-d/2 -\epsilon /2} (\R^d; - \epsilon) \subset B_{\infty}^{-d/2- \epsilon}(\R^d;-\epsilon) \subseteq   B_{\infty}^{\tau}(\R^d;\rho) ,
	\end{equation}
	concluding this case. \\
	
	\textbf{Case $p = \infty$ and $\tau \geq -d/2$.} Note that the case  $\tau > - d/2$ can be deduced from the results for $p < \infty$ by embedding, but one cannot deduce the case $\tau= -d/2$.  By keeping only the purely mother wavelet $\psi_{\bm{M}}$ with $\bm{M} = (M,\ldots, M)$ and the shift parameter $\bm{k} = \bm{0}$, the Besov norm \eqref{eq:pinftynorm} applied to the Gaussian white noise is
	\begin{equation} \label{eq:encoreunela}
	\lVert w \rVert_{B_\infty^{\tau}(\R^d;\rho)} \geq  \sup_{j \in \N}   \lvert \langle w , \psi_{j,\bm{M},\bm{0}} \rangle \rvert.
	\end{equation}
	The Gaussian random variables $\langle w , \psi_{j,\bm{M},\bm{0}} \rangle$ are independent and verifies the conditions of Lemma \ref{lemma:third} (since $\mathscr{P} (\lvert \langle w , \psi_{j,\bm{M},\bm{0}} \rangle \rvert \geq M) > 0$ for every $M \geq 0$). This implies that $ \sup_{j \in \N}   \lvert \langle w , \psi_{j,\bm{M},\bm{0}} \rangle \rvert = \infty$ a.s., and therefore $w \notin B_\infty^{\tau}(\R^d;\rho)$ a.s. due to \eqref{eq:encoreunela}. \\
	
	\textbf{Case $p = \infty$ and $\rho \geq 0$.} Again, the case  $\rho > 0$ can be deduced from the results for $p < \infty$ by embedding, but the case $\rho = 0$ cannot. 
	Using that $\langle \bm{k} \rangle^\rho \geq 1$ for $\rho \geq 0$ and keeping only the father wavelet $\phi = \psi_{\bm{F}}$ with $\bm{F} = (F,\ldots , F) \in \G^0$ and the scale $j = 0$, the Besov norm \eqref{eq:pinftynorm} of $w$ is lower bounded by
	\begin{equation} \label{eq:ploufplouf}
	\lVert w \rVert_{B_\infty^{\tau}(\R^d;\rho)} \geq \sup_{\bm{k}\in \Z} \lvert \langle w , \phi_{\bm{k}} \rangle \rvert.
	\end{equation}
	Again, Lemma \ref{lemma:third} applies to the Gaussian random variables $\langle w ,\phi_{\bm{k}} \rangle$ and  $\max_{\bm{k}\in \Z} \lvert \langle w ,\phi_{\bm{k}} \rangle \rvert = \infty$ a.s., implying that $\lVert w \rVert_{B_\infty^{\tau}(\R^d;\rho) }= \infty$ a.s. due to \eqref{eq:ploufplouf}. 
\end{proof}

The proof of Proposition \ref{prop:GaussBesov} for the case $\rho \geq - d / p$ uses an argument that can be easily adapted to any L\'evy white noise. We hence state this result in full generality.

\begin{proposition}\label{prop:uneptiteprop}
Fix $0<p \leq \infty$ and $\tau,\rho\in\R$.
If $w$ is a non-constant L\'evy white noise, then $w \notin B_{p}^\tau (\R^d;\rho)$ as soon as $\rho \geq -d /p$. 
\end{proposition}

\begin{proof}
The proof is very similar to the one of Proposition \ref{prop:GaussBesov} for $\rho \geq - d /p$ and $p < \infty$, and for $\rho \geq 0$ and $p = \infty$, except that we only consider father wavelet  $\phi = \psi_{\bm{F}}$ and its shifts $\phi_{\bm{k}} =   \psi_{0,\bm{F},\bm{k}}$ with $\bm{k} = k_0 \Z^d$, where $k_0 \in \N\backslash \{0\}$ is chosen such that the $\phi_{\bm{k}}$ have disjoint supports. In particular, this implies the random variables $\langle w , \phi_{\bm{k}} \rangle$, are independent  for $\bm{k} \in k_0 \Z^d$ (the support of the test functions being disjoint) and independent (the L\'evy white noise being stationary). As a consequence, \eqref{eq:trucgaussianpeugaussien} becomes 
	\begin{equation} \label{eq:trucgaussianpeugaussienbis}
	\lVert w \rVert^p_{B_p^{\reg}(\R^d;\rate)} 
	\geq 
	\sum_{\bm{k}\in k_0\Z^d} \langle \bm{k} \rangle^{\rate p} \lvert \langle w, \phi_{\bm{k}} \rangle \rvert^p 
	\geq
	 \sum_{\bm{k} \in k_0\Z^d} \frac{\lvert\langle w, \phi_{\bm{k}} \rangle \rvert^p}{\langle \bm{k} \rangle^d }
	\end{equation}
	and Lemma \ref{lemma:sumiid}  applies again, implying that $\lVert w \rVert^p_{B_p^{\reg}(\R^d;\rate)} =\infty$ a.s. Similarly, \eqref{eq:ploufplouf} becomes 
	\begin{equation} \label{eq:ploufplouf}
	\lVert w \rVert_{B_\infty^{\tau}(\R^d;\rho)} \geq \sup_{\bm{k}\in k_0 \Z} \lvert \langle w , \phi_{\bm{k}} \rangle \rvert.
	\end{equation}
	Then, it suffices to observe that $\mathscr{P}(  \lvert \langle w , \varphi \rangle \rvert \geq M ) > 0$ for any L\'evy white noise $w$ and any test function $\varphi \neq 0$. Indeed, the probability measure of an infinitely divisible is not compactly supported, except for constant L\'evy white noise $w = \mu \in\R$, what we have excluded~\cite[Corollary 24.4]{Sato1994levy}. Applying Lemma \ref{lemma:third}, we deduce finally that $w \notin B_{\infty}^\tau(\R^d;\rho)$ a.s.
\end{proof}

\section{Compound Poisson White Noise} \label{sec:Poisson}
			
	Compound Poisson white noises are almost surely made of countably many Dirac impulses, what will be crucial in their analysis.	
	Our positive results  are based on a careful  estimation of the moments of the compound Poisson white noise presented in  Proposition \ref{lemma:momentPoisson}.

	\begin{proposition} \label{lemma:momentPoisson}
		Let $w$ be a compound Poisson white noise with moment index $\indmom \in (0,\infty]$ and $0 < p < \indmom$.
		Then, there exists a constant $C$ such that
		\begin{equation} \label{eq:thecontrolPoisson}
			\mathbb{E} [ \abs{\langle w, \psi_{j,\bm{G},\bm{k}} \rangle}^p ] \leq C 2^{jpd/2 - jd}
		\end{equation}
		for every $j \in \N$, $\bm{G}\in \G^j$, and $\bm{k}\in \Z^d$.
	\end{proposition}

	\begin{proof}
	We recall that the Lebesgue measure is denoted by $\mathrm{Leb}$.
	Let $\lambda >0$ and $P$ be  respectively the sparsity parameter  and the law of the jumps of $w$. Then, we have that 
\begin{equation} \label{eq:poissonwithdirac}
w \overset{(\mathcal{L})}{=} \sum_{k\in \N} a_k \delta (\cdot - \bm{x}_k ),
\end{equation}
	where the $a_k$ are i.i.d. with law $P$, and the $\bm{x}_k$, independent from the $a_k$, are randomly located such that $\mathrm{Card} \{ k \in \N, \ \bm{x}_k \in B \}$ is a Poisson random variable with parameter $\lambda \mathrm{Leb}(B)$ for any Borel set $B\subset \R^d$ with finite Lebesgue measure. For a demonstration that the right term in \eqref{eq:poissonwithdirac} specifies a compound Poisson white noise in the sense of a generalized random process with the adequate characteristic functional, we refer the reader to  \cite[Theorem 1]{Unser2011stochastic}.
	
	 Let $\psi \in L_2(\R^d) \backslash\{0 \}$ be a compactly supported function and $I_{\psi}$ be the closed convex hull of its support. In particular, $0 < \mathrm{Leb}(I_{\psi}) < \infty$. We set $N(\psi) = \mathrm{Card} \{ k \in \N, \ \bm{x}_k \in I_{\psi}\}$, which is a Poisson random variable with parameter $\lambda \mathrm{Leb}(I_{\psi})$. We denote by $a'_n$ and $\bm{x}'_n$, $n=1,\ldots, N(\psi)$, the weights and Dirac locations of the compound Poisson white noise $w$ on $I_{\psi}$. 
	That is, $\langle w, \psi \rangle = \sum_{n=1}^{N(\psi)} a_n' \psi(\bm{x}_n')$.
	By conditioning on $N(\psi)$, we then have that
	\begin{align} \label{eq:equation1pourPoisson}
		\mathbb{E} [ \abs{\langle w, \psi \rangle}^p ]  
		& = 
		\sum_{N=1}^{\infty} \mathscr{P} ( N(\psi) = N ) \mathbb{E} \left[ \abs{\langle w, \psi \rangle}^p  | N(\psi) = N \right] 	 \nonumber \\
		&=
				\sum_{N=1}^{\infty} \mathscr{P} ( N(\psi) = N ) \mathbb{E} \left[ \abs{\sum_{n=1}^{N} a_n' \psi(\bm{x}_n')}^p  | N(\psi) = N \right] 	 \nonumber \\
		& \overset{(i)}{\leq}
		\sum_{N=1}^{\infty} \mathscr{P} ( N(\psi) = N ) \mathbb{E} \left[ N^{\max(0,p-1)} \sum_{n=1}^{N} \abs{ a_n' \psi(\bm{x}_n')}^p  | N(\psi) = N \right] 	 \nonumber \\
		& \leq \lVert \psi \rVert_{\infty}^p 
		\sum_{N = 1}^{\infty} 
		\left( N^{\max(0,p-1)} \mathscr{P} ( N(\psi) = N )
			\left( \sum_{n=1}^N \mathbb{E} \left[\abs{a_n'}^p  | N(\psi) = N \right] \right) \right) \nonumber \\
	&  \overset{(ii)}{=} \lVert \psi \rVert_{\infty}^p \sum_{N = 1}^{\infty}  \left( N^{\max(0,p-1)}  \mathscr{P} ( N(\psi) = N )
		\left( \sum_{n=1}^N \mathbb{E} \left[\abs{a_n'}^p\right]  \right) \right)  \nonumber \\
		&\overset{(iii)}{=} \lVert \psi \rVert_{\infty}^p\mathbb{E} \left[\abs{a_1'}^p\right] \sum_{N=1}^\infty N^{\max(1,p)} \mathscr{P} ( N(\psi) = N ) ,
	\end{align}
	where $(i)$ uses the relation $ \abs{\sum_{n=1}^N y_n}^p \leq N^{\max( 0, p-1)} \sum_{n=1}^N \abs{y_n}^p$, valid for any $p>0$ and $y_n \in \R$~\cite[Eq.(50)]{Fageot2016gaussian},  $(ii)$ is due to $\mathbb{E} \left[\abs{a_n'}^p  | N(\psi) = N \right] = \mathbb{E} \left[\abs{a_n'}^p \right]$, $a_n'$ and $N(\psi)$ being independent, and $(iii)$ comes from $\sum_{n=1}^N \mathbb{E} \left[\abs{a_n'}^p\right] = N \mathbb{E} \left[\abs{a_1'}^p\right]$, the $a_n'$ sharing the same law.
	
	Our goal is now to apply \eqref{eq:equation1pourPoisson} to $\psi = \psi_{j,\bm{G},\bm{k}}$.
	For fixed $j\geq 1$ and $\bm{k}\in \Z^d$, the Lebesgue measure of the convex hull $I_{\psi_{j,\bm{G},\bm{k}}}$ of the support of the $\psi_{j,\bm{G},\bm{k}}$ is $\mathrm{Leb}(I_{\psi_{j,\bm{G},\bm{k}}} ) = 2^{-jd}  \mathrm{Leb}(I_{\psi_{\bm{G}}})$.
	Therefore, $N(\psi_{j,\bm{G},\bm{k}}) = \mathrm{Card} \{ k \in \N, \ \bm{x}_k \in \psi_{j,\bm{G},\bm{k}}\}$ is a Poisson random variable with   parameter $2^{-jd} \lambda \mathrm{Leb}(I_{\psi_{\bm{G}}})$. As a consequence, we have
	\begin{align} \label{eq:equation2pourPoisson}
		\sum_{N=1}^\infty N^{\max(1,p)} \mathscr{P} ( N(\psi_{j,\bm{G},\bm{k}}) = N )
		& = 
		\sum_{N=1}^\infty N^{\max(1,p)}   \mathrm{e}^{-2^{jd} \lambda \mathrm{Leb}(I_{\psi_{\bm{G}}})} \frac{(\lambda \mathrm{Leb}(I_{\psi_{\bm{G}}}))^N 2^{-jdN}}{N!}  \nonumber \\
				& \leq 
				\left( \sum_{N=1}^\infty N^{\max(1,p)} \frac{(\lambda \mathrm{Leb}(I_{\psi_{\bm{G}}}))^N}{N!}  \right) 2^{-jd},
	\end{align}
	where we used that $2^{-jdN} \leq 2^{-jd}$ and $\mathrm{e}^{-2^{jd} \lambda \mathrm{Leb}(I_{\psi_{\bm{G}}})} \leq 1$.
	We have moreover the relation
	\begin{equation}\label{eq:equation3pourPoisson}
	\lVert \psi_{j,\bm{G},\bm{k}} \rVert_\infty^p = 2^{jpd/2} \lVert \psi_{\bm{G}}\rVert_\infty^p.
	\end{equation}
	Applying  inequalities \eqref{eq:equation2pourPoisson} and \eqref{eq:equation3pourPoisson} in \eqref{eq:equation1pourPoisson} with $\psi = \psi_{j,\bm{G},\bm{k}}$, we finally deduce \eqref{eq:thecontrolPoisson} (for the finite constant $C=\lVert \psi_{\bm{G}}\rVert_\infty^p \mathbb{E} \left[\abs{a_1'}^p\right]   \sum_{N\geq1}N^{\max(1,p)}  (\lambda \mathrm{Leb}(I_{\psi_{\bm{G}}}))^N  / N!$).
	\end{proof}
	
	\begin{proposition} \label{prop:PoissonBesov}
	Fix $0< p \leq \infty$ and $\tau, \rho \in \R$.	
	Let $w$ be a compound Poisson white noise with index $\indmom \in (0,\infty]$.
	If $ 0 < p <  \infty$, then, $w$ is 
		\begin{itemize}
		\item almost surely in $B_p^\reg(\R^d;\rate)$ if $\reg < d/p - d$ and $\rate < - d / \min(p ,\indmom) $,   and
		\item almost surely not in $B_p^\reg(\R^d;\rate)$   if $\reg \geq d/p - d$ or $\rate > -d / \min(p ,\indmom) $, or $\rate \geq - d /p$.
		\end{itemize}	
	If $ p = \infty$, then $w$ is 
		\begin{itemize}
		\item almost surely in $B_\infty^\reg(\R^d;\rate)$ if $\reg < - d$ and $\rate < - d / \indmom$,    and  
		\item almost surely not in $B_\infty^\reg(\R^d;\rate)$  if $\reg > - d$ or $\rate > - d /\indmom$, or $\rho \geq 0$.	
		\end{itemize}
	\end{proposition}

In Proposition \ref{prop:PoissonBesov}, we have split the results in two scenarii depending on what is know when  $\rho$ and/or $\tau$ are equal to the critical values. The only remaining cases that are not covered by Proposition \ref{prop:PoissonBesov} is when $\indmom < \infty$ and $\rho = -d /\indmom$ and when $p = \infty$ and $\tau = -d$. 
As for the Gaussian white noise in Corollary \ref{coro:GaussBesov}, a direct consequence of Proposition \ref{prop:PoissonBesov} is the identification of the local smoothness and the asymptotic growth rate of a compound Poisson white noise.

\begin{corollary} \label{coro:PoissonBesov}
	Let $w$ be a compound Poisson white noise with moment index $0< \indmom \leq \infty$ and $0 < p \leq \infty$. Then, we have almost surely that
	\begin{equation} \label{eq:taurhopoisson}
	\tau_p(w) = \frac{d}{p} - d  \quad \text{ and } \quad \rho_p(w) = - \frac{d}{\min (p,\indmom)}.
	\end{equation}
\end{corollary}

\begin{proof}
The result is easily deduced from the definition of $\tau_p(w)$ and $\rho_p(w)$ in Section \ref{sec:taurho}. Note that the two first cases treated in Proposition \ref{prop:PoissonBesov} gives together that $\tau_p(w) = \frac{d}{p} - d$ and $\rho_p(w) = - \frac{d}{\min(p, \indmom)}$ for $p < \infty$ while the two last ones gives $\tau_\infty(w) = - d$ and $\rho_\infty(w) = - \frac{d}{\indmom}$. Finally, \eqref{eq:taurhopoisson} condenses all the results. 
\end{proof}

\begin{proof}[Proof of Proposition \ref{prop:PoissonBesov}]
	\textbf{Case $p < \indmom$, $\reg < d/p - d$, and $\rate < - d / p$.}
	Under these assumptions, we apply Proposition \ref{lemma:momentPoisson} to deduce that
	\begin{align} \label{eq:intermposCPreg}
		\mathbb{E} \left[ \lVert w \rVert_{B_p^\tau(\R^d;\rate)}^p \right] 
		&= \sum_{j \in \N} 2^{j (\tau p - d + dp/2)} \sum_{\bm{G} \in \G^j} \sum_{\bm{k} \in \Z^d} \langle 2^{-j} \bm{k} \rangle^{\rate p} \mathbb{E} [ \abs{\langle w, \psi_{j,\bm{G},\bm{k}} \rangle}^p ] \nonumber \\
		& \leq 
		C 2^d \sum_{j \in \N} 2^{j (\tau p - d + dp)} \frac{1}{2^{jd}} \sum_{\bm{k} \in \Z^d}  \langle 2^{-j} \bm{k} \rangle^{\rate p},
	\end{align}
	where $C$ is the constant appearing in \eqref{eq:thecontrolPoisson}, and using that	$\mathrm{Card}(\G^j) \leq 2^d$. Then, $$\frac{1}{2^{jd}} \sum_{\bm{k} \in \Z^d}  \langle 2^{-j} \bm{k} \rangle^{\rate p} \underset{j \rightarrow \infty}{\longrightarrow} \int_{\R^d} \langle \bm{x} \rangle^{\rho p}\drm \bm{x} < \infty$$ because $\rate < - d / p$. The sum in \eqref{eq:intermposCPreg} is therefore finite if and only if $\sum_{j} 2^{j (\reg p - d + dp)} < \infty$, which happens here due to our assumption $(\tau p - d + dp)<0$. This shows that $w \in B_p^\tau(\R^d;\rate)$ almost surely. \\
	
\textbf{Case $\indmom \leq p$, $\reg < d/p - d$, and $\rate < - d / \indmom $.}
	We prove that $w \in  B_p^\reg(\R^d;\rate)$ a.s. using the embeddings between Besov spaces and the study of the case $p < \indmom$ before.
	From the conditions on $\reg$ and $\rate$, one can find $p_0 \in (0,\indmom )$, $\tau_0 \in \R$, and $\rho_0 \in \R$ such that
	\begin{align}
		\rate <   \rate_0  < - \frac{d}{p_0} < - \frac{d}{\indmom}, \\
		\tau + \frac{d}{p_0} - \frac{d}{p} <   \reg_0 < \frac{d}{p_0} - d.
	\end{align}
	Then, in particular, $p_0 < p$, $\tau_0 - \tau > d / p_0 - d / p$, and $\rho_0 > \rho$,
	so that $B_{p_0}^{\reg_0}(\R^d;\rate_0) \subset B_p^\reg(\R^d;\rate)$ (according to \eqref{eq:embedd1}).
	Moreover, $p_0 < \indmom$, $\tau_0 < d / p_0 -d$, and $\rate_0 < - d / p_0$. We are therefore back to the first case, for which we have already shown that $w \in B_{p_0}^{\reg_0}(\R^d;\rate_0)$ a.s.
	In conclusion, $w \in  B_p^\reg(\R^d;\rate) $ a.s. \\
	
	Combining these first two cases, we obtain that $w \in  B_p^\reg(\R^d;\rate)$ if $\reg < d/p - d$ and $\rate < - d /\min(p,\indmom)$ for every $p\in (0 ,\infty]$. \\

	\textbf{Case $p < \infty$ and $\reg \geq d/p - d$.} We use the representation \eqref{eq:poissonwithdirac} of the compound Poisson white noise.
	Assume that $w$ is in   $B_p^\reg(\R^d;\rate)$ for some $\rate \in \R$. 
	Then, the product of $w$ by any compactly supported smooth test function $\varphi$ is well-defined and also in $B_p^\reg(\R^d;\rate)$. Choosing a (random) test function $\varphi \in \mathcal{S}(\R^d)$ such that $\varphi(\bm{x}_0) = 1$ and $\varphi(\bm{x}_n) = 0$ for $n \neq 0$, we get 
	\begin{equation}
		\varphi  w = \varphi  \cdot a_0 \delta(\cdot - \bm{x}_0) =  a_0 \delta(\cdot - \bm{x}_0) \in B_p^\reg(\R^d;\rate),
	\end{equation}
	where $a_0 \neq 0$ a.s. This is absurd due to Proposition  \ref{prop:DiracBesov}, proving  that $w \notin  B_p^\reg(\R^d;\rate)$ for all $\rate \in \R$. \\

	\textbf{Case $p = \infty$ and $\reg > - d$.} The same argument than for the case $p < \infty$ and $\reg \geq d/p - d$ applies, using this time that $w \notin B_\infty^{\tau} (\R^d;\rho)$ for any $\rho > 0$, again due to Proposition \ref{prop:DiracBesov}. \\

	\textbf{Case  $\rate \geq - d /p$.} This case has been treated in full generality in Proposition \ref{prop:uneptiteprop}. \\
	
	\textbf{Case  $p > \indmom$ and $\rate > - d / \indmom$.}
	This means in particular that $\indmom < \infty$. 
	We treat the case $\rate < 0$, the extension for $\rate \geq 0$ clearly follows from the embedding relations between Besov spaces.
	We set $q := - d / \rate > \indmom$. In particular, according to Proposition \ref{prop:momentnoisepmax}, we have that $\mathbb{E} [\lvert \langle w , \varphi\rangle \rvert^q] = \infty$ for any compactly supported and bounded function $\varphi \neq 0$. 
	Proceeding as for \eqref{eq:trucgaussianpeugaussienbis}, we have that
\begin{equation} \label{eq:lowerbesovwfather2}
		\lVert w \rVert^p_{B_p^\reg(\R^d;\rate)}  
		\geq 
		\sum_{\bm{k} \in k_0 \Z^d} \frac{\lvert \langle w , \phi_{\bm{k}} \rangle\rvert^p}{\langle \bm{k}\rangle^{dp/q}},
	\end{equation}
	where $k_0 \geq 1$ is chosen such that the functions $\phi_{\bm{k}} =\phi(\cdot - \bm{k})$ have disjoint supports. Then, the random variables $X_{\bm{k}} = \langle w, \phi_{\bm{k}} \rangle$ are i.i.d. The independence implies that the events $A_{\bm{k}} = \{ X_{\bm{k}} \geq \langle \bm{k} \rangle^{d/q} \}$ are independent themselves.
	Then, the $X_{\bm{k}}$ having the same law, we have
	\begin{equation} \label{eq:sumAk}
		\sum_{\bm{k} \in k_0\Z^d} \mathscr{P}( A_{\bm{k}} ) 
		= \sum_{\bm{k} \in k_0\Z^d} \mathscr{P}( |X_{\bm{k}}| \geq \langle \bm{k} \rangle^{d/q} )  =  \sum_{\bm{k} \in k_0\Z^d} \mathscr{P}( \abs{X_{\bm{0}}}^{q} \geq \langle \bm{k} \rangle^d ) \geq \sum_{m \geq 1}\mathscr{P} (\abs{X_{\bm{0}}}^{q} \geq m k_0).
	\end{equation}
	Moreover, we have $
	\mathbb{E} [\abs{X_{\bm{0}}}^{q}]  = \int_0^\infty \mathscr{P}(\abs{X_{\bm{0}}}^{q} \geq x) \drm x  
	 = \sum_{m \geq 1} \int_{m k_0}^{(m+1) k_0} \mathscr{P}( \abs{X_{\bm{0}}}^{q} \geq x) \drm x  $.
	  Exploiting that $\mathscr{P} ( \abs{X_{\bm{0}}}^{q} \geq x )$ is decreasing in $x$, we moreover have that $\int_{m k_0}^{(m+1) k_0} \mathscr{P}( \abs{X_{\bm{0}}}^{q} \geq x) \drm x \leq k_0  \mathscr{P}( \abs{X_{\bm{0}}}^{q} \geq mk_0 )$, and therefore, 
	  \begin{equation} \label{eq:momentY}
	  \mathbb{E} [\abs{X_{\bm{0}}}^{q}]   \leq  \sum_{m \geq 1}  \mathscr{P}( \abs{X_{\bm{0}}}^{q} \geq mk_0 )   .
	  \end{equation} 
	The choice of $q$ implies moreover that  $\mathbb{E} [\abs{X_{\bm{0}}}^{q}] =  \mathbb{E} [\abs{\langle w , \phi\rangle }^{q}] =  \infty$ due to Proposition \ref{prop:momentnoisepmax}.
	Hence, from \eqref{eq:sumAk} and \eqref{eq:momentY}, we deduce that  $	\sum_{\bm{k} \in k_0\Z^d} \mathscr{P}( A_{\bm{k}} ) \geq \mathbb{E}[\abs{X_{\bm{0}}}^{q}]  = \infty$. The Borel-Cantelli lemma  then implies that $ \abs{X_{\bm{k}} }^p \geq \langle \bm{k} \rangle^{  d p / q}$ for infinitely many $\bm{k}$ a.s. Back to \eqref{eq:lowerbesovwfather2}, this implies that $\lVert w \rVert^p_{B_p^\reg(\R^d;\rate)}  = \infty$ a.s. \\	
\end{proof}

\section{Moment Estimates for the  L\'evy White Noise} \label{sec:estimates}

	The proof of Theorem \ref{maintheo:regdecaylimit} will be based on new estimates for the moments $\mathbb{E} [\lvert \langle w , \varphi \rangle\rvert^p]$ of L\'evy white noises. In Section \ref{subsec:higherorder}, we consider the case $p = 2m$ where $m\geq 1$ is an integer. This will be critical when dealing with L\'evy white noises with finite moments. 
	In Section \ref{subsec:theolowerupper}, we determine lower bounds for the moments, {which is the main technicality for the negative Besov regularity results of L\'evy white noises.}

	\subsection{Moment Estimates for $p = 2m \geq 2$} \label{subsec:higherorder}
	
	We estimate the evolution of the even moments of the wavelet coefficients of a L\'evy white noise $w$ with the scale $j$. Most of the moment estimates in the literature deal with $p$th moments with the restriction $p\leq 2$~\cite{Deng2015shift,Fageot2017multidimensional,Kuhn2017existence,Luschgy2008moment}, and we shall see that the extension to higher-order moments calls for some technicalities.

	\begin{proposition} \label{lemma:boundcoefficientfinitemoments}
		Let $w$ be a L\'evy white noise with finite moments and $m\geq 1$ be an integer. 
		We assume that the moment index of $w$ satisfies $\indmom > 2m$.
		Then, there exists a constant $C>0$ such that, for every $j \in \N$, $\bm{G} \in \G^j$, and $\bm{k}\in \Z^d$,
		\begin{equation} \label{eq:boundcoefficientsfinitemoment}
			\mathbb{E} [\langle w, \psi_{j,\bm{G},\bm{k}} \rangle^{2m}] \leq C 2^{j d (m-1)}.
		\end{equation}
	\end{proposition}
	
	\begin{proof}
		Consider a test function $\varphi \in \S(\R^d)$ and set $X = \langle w , \varphi \rangle$.
		The characteristic function of $X$ is~\cite[Proposition 2.12]{Fageotthese}
		\begin{equation} \label{eq:CFX}
			\CF_X(\xi) = \exp\left( \int_{\R^d} \Psi(\xi  \varphi (\bm{x}) )\drm \bm{x} \right) := \exp( \Psi_{\varphi}(\xi)).
		\end{equation}
		The functions $\CF_X$ and $\Psi_{\varphi}$ are $(2m)$-times differentiable because $\mathbb{E}[X^{2m}] < \infty$~{\cite[Theorem 1.5.1]{ushakov2011selected}}. 
		Their Taylor expansions give  the moments and the cumulants of $X$, respectively. In particular, we have that $\mathbb{E}[X^{2m}] = (-1)^m \CF_X^{(2m)}(0)$. 
		Using the Fa\`a di Bruno formula with the composite function $\xi \mapsto \CF_X(\xi) = \exp( \Psi_{\varphi}(\xi))$, we express the $(2m)$th derivative of $\CF_X$ as
		\begin{equation}
			\CF_X^{(2m)}(\xi) = \left( \sum_{n_1,\ldots ,n_{2m} : \sum_u u n_u = 2m} \frac{(2m)!}{n_1 ! \ldots n_{2m}!} \prod_{v = 1}^{2m} \left( \frac{\Psi_{\varphi}^{(v)}(\xi)}{v!}\right)^{n_v}\right) \CF_X(\xi).
		\end{equation}
		Exploiting that $\Psi^{(v)}_\varphi (0) = \left( \int_{\R^d} (\varphi(\bm{x}))^v \drm \bm{x}\right) \Psi^{(v)}(0)$ for $\xi= 0$~\cite[Proposition 9.11]{Unser2014sparse}, we obtain the bound,
		\begin{equation} \label{eq:boundCFXvarphi}
			\abs{\CF_X^{(2m)}(0)} \leq C'  \sum_{n_1,\ldots ,n_{2m} : \sum_u u n_u = 2m} \prod_{v=1}^{2m} \left(\int_{\R^d} \abs{\varphi(\bm{x})}^v \drm \bm{x}\right)^{n_v}		
		\end{equation}
with $C'>0$ a constant. 
We now apply \eqref{eq:boundCFXvarphi} to $\varphi = \psi_{j,\bm{G},\bm{k}}$. 
Since we have
	\begin{equation}
		\int_{\R^d}  \abs{\psi_{j,\bm{G},\bm{k}}(\bm{x})}^v \drm \bm{x} = 2^{j d v / 2} \int_{\R^d} \abs{\psi_{\bm{G}}(2^j \bm{x} - \bm{k})}^v \drm \bm{x} = 2^{jd ( v/2 - 1)} \int_{\R^d} \abs{ \psi_{\bm{G}}(\bm{x})}^v \drm \bm{x},
	\end{equation}
	we deduce from \eqref{eq:boundCFXvarphi} the new bound
	\begin{align}
	\mathbb{E} [\langle w, \psi_{j,\bm{G},\bm{k}} \rangle^{2m}] &= \abs{\CF_{\langle w, \psi_{j,\bm{G},\bm{k}}\rangle}^{(2m)} (0) }   \leq  C''  \sum_{n_1,\ldots , n_{2m} : \sum_u u n_u = 2m} \prod_{v=1}^{2m} 
						2^{jdn_v ( v/2 - 1)} \nonumber \\
				& = C'' \sum_{n_1,\ldots , n_{2m} : \sum_u u n_u = 2m} 2^{j d  \sum_{1\leq v \leq 2m} \left(n_v ( v/2 - 1)\right)},
	\end{align}	
	where $C''$ is a new constant independent from $j, G, \bm{k}$. Finally, since $\sum_v n_v  v = 2m$ and $\sum_v n_v \geq 1$, we have $ \sum_v \left(n_v ( v/2 - 1)\right) \leq (m -1)$. Therefore, 
	we obtain \eqref{eq:boundcoefficientsfinitemoment}
	for an adequate $C>0$.
	\end{proof}

	\subsection{Lower Bound for Moment Estimates} \label{subsec:theolowerupper}

In our previous moment estimates, we gave upper bounds for the quantity $\mathbb{E}[ \abs{\langle w, \varphi\rangle}^p]$ (see not only Propositions \ref{lemma:momentPoisson} and \ref{lemma:boundcoefficientfinitemoments}, but also Theorem 2 in \cite{Fageot2017multidimensional}). This allows one  to identify in which Besov space is $w$. We now address the following problem: Can we  bound $\mathbb{E}[ \abs{\langle w, \varphi\rangle}^p]$ from below with the moments of $\varphi$? Theorem \ref{lemma:lowerbound} answers positively to this question and is crucial for the proof of Theorem \ref{maintheo:regdecaylimit}.

\begin{theorem} \label{lemma:lowerbound}
		Let $w$ be a L\'evy white noise whose  indices satisfy $0 < \indlocinf \leq \indlocsup  < \indmom$,
		$\psi \neq 0$ a bounded, and compactly supported test function, 
		and $p$ an integrability parameter such that $0<p< \indlocsup$.
		Then, for $\epsilon>0$ small enough, there exists constants $A, B > 0$ independent from $j\in \N$ and $\bm{k}\in\Z^d$ such that
		\begin{equation} \label{eq:boundsonmoments}
			A 2^{- j\epsilon} 2^{j d p (1 / 2 - 1 / \indlocinf)} \leq \mathbb{E} [ \abs{\langle w , \psi_{j,\bm{k}} \rangle}^p] \leq B 2^{j\epsilon}  2^{j d p (1 / 2 - 1 / \indlocsup)} 
		\end{equation}		
		for any $j \in \N$ and $\bm{k} \in \Z^d$, where we recall that $ \psi_{j,\bm{k}} = 2^{jd/2} \psi(2^j \cdot -\bm{k})$.
	\end{theorem}

	\begin{proof}
		First of all, the shift parameter $\bm{k}$ in \eqref{eq:boundsonmoments} can be omitted since $w$ is stationary. We also remark that the upper bound of \eqref{eq:boundsonmoments} has already been proven~\cite[Corollary 1]{Fageot2017multidimensional}, where the conditions $p<\indlocsup < \indmom$ are required. Actually, \cite{Fageot2017multidimensional} does not consider the index $\indmom$ and distinguishes between the conditions $\indlocsup < \indasysup<2$ and $\indlocsup \leq \indasysup = 2$ with finite variance. These two scenarios cover the condition $\indlocsup < \indmom$ of Theorem \ref{lemma:lowerbound}. 		
		Hence, we focus on the lower bound.
		
		Because $p<\indlocsup \leq 2$, one can use the  representation of the $p$th moment of $\langle w, \varphi\rangle$, that can be found in {\cite[Theorem 1.5.9]{ushakov2011selected} (with $n = 0$ and $\delta = p$)}:
		\begin{equation}\label{eq:pthmomentRV}
		\mathbb{E}[ \abs{\langle w, \varphi\rangle}^p] = c_p \int_{\R} \frac{1 - \Re \{ \CF_{\langle w, \varphi\rangle} (\xi  ) \}  }{\abs{\xi}^{p+1}} \mathrm{d}\xi
		\end{equation}
		for some explicit constant $c_p>0$.The relation \eqref{eq:pthmomentRV} is often used for moment estimates, for instance in \cite{Deng2015shift,Luschgy2008moment}.
	We then remark that
	\begin{align}
		 \Re \{ \CF_{\langle w, \varphi\rangle} (\xi) \} 
		& \leq \abs{\CF_{\langle w, \varphi\rangle} (\xi)} = \abs{\CF_w(\xi \varphi)} = \abs{\exp\left(\int_{\R^d} \Psi(\xi\varphi(\bm{x})) \mathrm{d}\bm{x} \right) } \nonumber \\
		 &= \exp\left( \int_{\R^d} \Re \{\Psi (\xi \varphi(\bm{x})) \} \mathrm{d}\bm{x} \right).
	\end{align}
	The test function $\varphi$ is chosed to be non-identically zero, hence there exists some constant $m >0$ such that $\mathrm{Leb}(\abs{\varphi} \geq m \lVert \varphi \rVert_\infty) > 0$. We fix such a constant $m$ and observe that 
	\begin{equation}
	\label{eq:withthenewm}
	\varphi \One_{\abs{\varphi} >  m \lVert \varphi \rVert_\infty } \neq 0.
	\end{equation}
	The sector condition \eqref{eq:sectorcondition} implies that one can found $c_{\Psi}>0$ such that $- \Re \{\Psi(\xi)\} =  \abs{\Re \{ \Psi \}} (\xi)  \geq c_{\Psi} \abs{\Psi(\xi)}$. Thus, one has that $\abs{\CF_{\langle w, \varphi\rangle} (\xi)} \leq \exp\left( -c_{\Psi} \int_{\R^d} \abs{\Psi(\xi \varphi(\bm{x})) }\mathrm{d}\bm{x} \right)$, and then	
	\begin{equation} \label{eq:lowerbound1}
		\mathbb{E}[ \abs{\langle w, \varphi\rangle}^p]  \geq   c_p \int_{\R} \frac{1 - \mathrm{e}^{ -c_{\Psi} \int_{\R^d} \abs{\Psi(\xi \varphi(\bm{x})) }\mathrm{d}\bm{x}} }{\abs{\xi}^{p+1}} \mathrm{d}\xi.
	\end{equation}
	Now, by definition of the index $\indlocinf$, the function $\abs{\Psi}$ is dominating 
	$\abs{\xi}^{\indlocinf - \delta}$ at infinity 
	for an arbitrarily small $\delta$ such that $0< \delta < \indlocinf$. We fix such $\delta$ and set $\beta = (\indlocinf - \delta)$. 
	This domination, together with the continuity of the functions $\abs{\Psi}$ and $\abs{\cdot}^{\beta} \One_{\abs{\cdot}>1}$ over $[1,\infty)$,  imply  the existence of $C>0$ such that, for any $\xi \in \R$, $\abs{\Psi(\xi)} \geq C \abs{\xi}^{\beta} \One_{\abs{\xi}>1}$. In particular, we have that
	\begin{align} \label{eq:newbigbound}
		\int \abs{\Psi(\xi \varphi(\bm{x}))} \mathrm{d}\bm{x} &\geq C \abs{\xi}^{\beta} \int_{\R^d} \abs{\varphi(\bm{x})}^{\beta} \One_{\abs{\xi \varphi(\bm{x})}>1} \mathrm{d}\bm{x}  \nonumber \\
		& \overset{(i)}{\geq} C \One_{\abs{\xi } > 1 /  m \lVert \varphi \rVert_\infty} 
		\abs{\xi}^{\beta} \int_{\R^d} \abs{\varphi(\bm{x})}^{\beta} \One_{\abs{\varphi(\bm{x})}>m \lVert \varphi \rVert_\infty} \mathrm{d}\bm{x} \nonumber \\
		& = C \One_{\abs{\xi } > 1 / m \lVert \varphi \rVert_\infty} 
		\abs{\xi}^{\beta} \lVert \varphi \One_{\abs{\varphi} > m \lVert \varphi \rVert_\infty } \rVert_\beta^\beta,
	\end{align}
	where the $(i)$  uses that 
$	\One_{\abs{\xi \varphi(\bm{x})}>1} 
	 \geq \One_{\abs{\varphi(\bm{x})}> m \lVert \varphi \rVert_\infty }\One_{\abs{\xi } > 1/ m \lVert \varphi \rVert_\infty} $, where $m$ is such that \eqref{eq:withthenewm} holds. 
	 
	Combining \eqref{eq:lowerbound1} and \eqref{eq:newbigbound}, we therefore have
	 \begin{align}
	 \mathbb{E}[ \abs{\langle w, \varphi\rangle}^p] 
	 \geq
	  c_p \int_{\R} \frac{1 - \mathrm{e}^{ -c_{\Psi} C \One_{\abs{\xi } > 1/ m \lVert \varphi \rVert_\infty} 
		\abs{\xi}^{\beta} \lVert \varphi \One_{\abs{\varphi} > m \lVert \varphi \rVert_\infty } \rVert_\beta^\beta} }{\abs{\xi}^{p+1}} \mathrm{d}\xi.
	 \end{align}
	 We use the change of variable $u = (c_\Psi C)^{1/\beta} \lVert \varphi \One_{\abs{\varphi} > m  \lVert \varphi \rVert_\infty  } \rVert_\beta \xi$ to obtain
	 \begin{align} \label{eq:encoreuneroger}
	 	 \mathbb{E}[ \abs{\langle w, \varphi\rangle}^p] 
	 	 & \geq C' \lVert \varphi \One_{\abs{\varphi} > m \lVert \varphi \rVert_\infty  } \rVert_\beta^p \int_{\R} \frac{1 - \mathrm{exp}\left(- \abs{u}^\beta \One_{\abs{u} > C'' \frac{\lVert \varphi \One_{\abs{\varphi} > m \lVert \varphi \rVert_\infty } \rVert_\beta}{\lVert \varphi \rVert_\infty}}\right) }{\abs{u}^{p+1}} \mathrm{d}u
	 \end{align}
	 for some constants $C',C''>0$.We now set $\varphi = \psi_j = 2^{jd/2} \psi(2^{j} \cdot)$, and observe that, with simple changes of variable,
	\begin{align}
		\lVert \varphi \One_{\abs{\varphi} > m \lVert \varphi \rVert_\infty } \rVert_\beta^p & = 2^{j d p \left(\frac{1}{2} - \frac{1}{\beta}\right)} \lVert \psi \One_{\abs{\psi} >m  \lVert \psi \rVert_\infty} \rVert_\beta^p, \label{eq:trutrutruc} \\
		\frac{\lVert \varphi \One_{\abs{\varphi} > m \lVert \varphi \rVert_\infty } \rVert_\beta}{\lVert \varphi \rVert_\infty}
		&= \frac{2^{j d \left( \frac{1}{2} - \frac{1}{\beta}\right)} \lVert \psi \One_{\abs{\psi} >m  \lVert \psi \rVert_\infty} \rVert_\beta}{2^{j d /2}\lVert \psi \rVert_\infty}= 2^{- j d / \beta} \frac{ \lVert \psi \One_{\abs{\psi} > m \lVert \psi \rVert_\infty } \rVert_\beta}{\lVert \psi \rVert_\infty}. \label{eq:ratioofnormpsi}
	\end{align}
	In particular, 
	\begin{equation}
	\One_{\abs{u} > C'' 2^{- j d / \beta} \frac{ \lVert \psi \One_{\abs{\psi} >m  \lVert \psi \rVert_\infty} \rVert_\beta}{\lVert \psi \rVert_\infty}} \geq \One_{\abs{u} > C'' \frac{ \lVert \psi \One_{\abs{\psi} >m  \lVert \psi \rVert_\infty} \rVert_\beta}{\lVert \psi \rVert_\infty}}	
	\end{equation}		
	for any $j \in \N$. Hence, we deduce using \eqref{eq:encoreuneroger} with $\varphi = \psi_j$ that
	\begin{equation}
		 	 \mathbb{E}[ \abs{\langle w, \psi_j \rangle}^p] 
	\geq 
	B 2^{j d p \left( \frac{1}{2} - \frac{1}{\beta}\right)}
	\end{equation}
	with $B >0$ a constant given by
	\begin{equation}
	B = C' 
	\lVert \psi \One_{\abs{\psi} >m  \lVert \psi \rVert_\infty} \rVert_\beta^p
	\int_{\R} 
	\frac{1 - 
	\mathrm{exp}\left(
	- \abs{u}^\beta 
	 \One_{\abs{u} > C'' \frac{ \lVert \psi \One_{\abs{\psi} >m  \lVert \psi \rVert_\infty} \rVert_\beta}{\lVert \psi \rVert_\infty}}
	\right) }{\abs{u}^{p+1}} \mathrm{d}u.
	\end{equation}
	Remark that $B \neq 0$ because $\psi \One_{\abs{\psi} >m  \lVert \psi \rVert_\infty} \neq 0$ due to \eqref{eq:trutrutruc} and   \eqref{eq:withthenewm} .
		To conclude, we remark that, for $\epsilon >0$ fixed, one can find $\delta>0$ small enough such that $2^{-j\epsilon} 2^{j d  \left( \frac{1}{2}- \frac{1}{\indlocinf}\right)} \leq2^{j d  \left( \frac{1}{2}- \frac{1}{\indlocinf - \delta}\right)} = 2^{jd \left(\frac{1}{2}- \frac{1}{\beta}\right)}$ for any $j \in \N$, which gives the lower bound in \eqref{eq:boundsonmoments}. 
	\end{proof}

\section{L\'evy White Noise with Finite Moments} \label{sec:finitemoments}

We consider L\'evy white noises whose all the moments are finite, which means that $\indmom = \infty$. 
Their specificity is that one can use the finiteness of the $p$th moments of the wavelet coefficients of the L\'evy white noise $w$ for any $p>0$.
Thanks to the moment estimates in Section \ref{sec:estimates}, we have all the tools to deduce the Besov regularity of  white noises {with finite moments.}
	
		\begin{proposition} \label{prop:finiteMomentsBesov}
	Fix $0< p \leq \infty$ and $\tau, \rho \in \R$.	
	Let $w$ be a  L\'evy white noise with finite moments and Blumenthal-Getoor indices $0 \leq \indlocinf \leq \indlocsup \leq 2$.
		Then, $w$ is 
\begin{itemize}
	\item almost surely in $B_p^\reg(\R^d;\rate)$ if $\reg < d/\max(p,\indlocsup) - d$ and $\rate < - d /p$, for $0<p\leq 2$, $p$ an even integer, or $p=\infty$; and
	\item  almost surely not in $B_p^\reg(\R^d;\rate)$ if $\reg >  d/\max(p,\indlocinf) - d$ or $\rate \geq  - d /p$ for every $0<p \leq \infty$.
\end{itemize}
	\end{proposition}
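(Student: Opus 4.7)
The plan is to follow the three-step strategy sketched in Section~\ref{subsec:taurhointro}: prove separately the positive inclusion ($w \in B_p^\tau(\R^d;\rho)$ almost surely under the positive threshold), the negative smoothness result ($w \notin B_p^\tau(\R^d;\rho)$ a.s.\ when $\tau$ is too large), and the negative decay result (a.s.\ not-in when $\rho$ is too large). The wavelet characterization of Section~\ref{subsec:besovspaces} reduces every claim to an almost-sure statement about the coefficients $\langle w,\psi_{j,G,\bm k}\rangle$. For the \emph{positive inclusion} with $\tau < d/\max(p,\indlocsup) - d$ and $\rho < -d/p$, I would directly estimate $\mathbb{E}[\|w\|_{B_p^\tau(\R^d;\rho)}^p]$. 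The Riemann-sum comparison $\sum_{\bm k \in \Z^d}\langle 2^{-j}\bm k\rangle^{\rho p}\sim 2^{jd}\int\langle\bm x\rangle^{\rho p}\drm\bm x$ contributes a factor $2^{jd}$ (finite because $\rho p<-d$), and the remaining $j$-series is controlled by a case split on $p$: for $0<p<\indlocsup$, Lemma~\ref{lemma:lowerbound}'s upper bound yields the convergence threshold $\tau < d/\indlocsup - d$; for $\indlocsup\leq p\leq 2$, log-convexity of $p\mapsto\mathbb{E}[|X|^p]$ (Lyapunov/H\"older) interpolates between a base point $p_0<\indlocsup$ and $p_1=2$ (where the variance is a computable constant) to yield the moment bound $\mathbb{E}[|\langle w,\psi_{j,G,\bm k}\rangle|^p]\leq C2^{jd(p-2)/2+\epsilon}$ and hence the sharper threshold $\tau < d/p - d$; for even integers $p=2m$, Lemma~\ref{lemma:boundcoefficientfinitemoments} gives the same threshold directly; and $p=\infty$ follows from the Besov embedding of Proposition~\ref{prop:besovembed}, letting $m\to\infty$.

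For the \emph{negative smoothness}, I would first treat the sub-case $p<\indlocsup$ with $\tau > d/\indlocinf - d$ along the lines of the Gaussian proof. Select $I_j\subset\{0,\ldots,2^j-1\}^d$ so that the supports of the wavelets $\psi_{j,M^d,\bm k}$ for $\bm k\in I_j$ are pairwise disjoint, whence $|I_j|\gtrsim 2^{jd}$ and the random variables $X_{j,\bm k}:=|\langle w,\psi_{j,M^d,\bm k}\rangle|^p$ are i.i.d.\ on $I_j$ by spatial independence of $w$. Discarding all other contributions,
\[
\|w\|^p_{B_p^\tau(\R^d;\rho)}\;\geq\;c\sum_{j\geq 0}2^{j(\tau p - d + dp/2)}\sum_{\bm k\in I_j} X_{j,\bm k},
\]
and Lemma~\ref{lemma:lowerbound} shows that the expectation of the $j$-th block grows like $2^{j(\tau p + dp - dp/\indlocinf - \epsilon)}$, diverging precisely when $\tau > d/\indlocinf - d$. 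Almost-sure divergence will then follow from a Paley--Zygmund argument (controlling the second moment via Lemma~\ref{lemma:lowerbound} applied at integrability $2p$ if $2p<\indlocsup$, or via Lemma~\ref{lemma:boundcoefficientfinitemoments} otherwise) combined with Borel--Cantelli along a sparse subsequence of scales extracted so that the selected supports are also pairwise disjoint across scales and thus truly independent. The remaining ranges with $\tau > d/\max(p,\indlocinf) - d$ will be reached via the embedding $B_p^\tau(\R^d;\rho)\hookrightarrow B_{p_1}^{\tau_1}(\R^d;\rho_1)$ of Proposition~\ref{prop:besovembed}, taking $p_1>p$ with $\tau_1 = \tau - d/p + d/p_1$ so as to reduce to a non-inclusion at smaller integrability that has already been established.

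For the \emph{negative decay} with $\rho\geq -d/p$, the argument mirrors Propositions~\ref{prop:GaussBesov} and~\ref{prop:PoissonBesov}: restrict the Besov (quasi)norm to the scale $j=0$ father-wavelet layer, keeping only $\bm k\in k_0\Z^d$ for $k_0$ chosen large enough that the $\phi_{\bm k}$ have pairwise disjoint supports. The resulting family $\langle w,\phi_{\bm k}\rangle$ is then i.i.d.\ and nonzero (otherwise $w$ would be identically zero), giving the lower bound $\|w\|^p_{B_p^\tau(\R^d;\rho)}\geq \sum_{\bm k\in k_0\Z^d}|\langle w,\phi_{\bm k}\rangle|^p/\langle\bm k\rangle^d$, which is a.s.\ infinite by Lemma~\ref{lemma:sumiid}.

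The principal technical obstacle lies in the negative smoothness step when $\indlocinf<\indlocsup$. The upper and lower moment bounds of Lemma~\ref{lemma:lowerbound} then differ by a factor growing like $2^{jdp(1/\indlocinf - 1/\indlocsup)}$, so the Paley--Zygmund ratio $\mathbb{E}[S_j]^2/\mathbb{E}[S_j^2]$ for a single block $S_j$ can decay exponentially in $j$ once $p$ is close to $\indlocsup$, and Paley--Zygmund applied to one coefficient would be insufficient. The remedy is to exploit the large cardinality $|I_j|\sim 2^{jd}$ through a Chebyshev-type estimate on the block sums and to carefully extract a scale subsequence along which full independence is available, after which Borel--Cantelli closes the argument.
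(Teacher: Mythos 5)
Your overall architecture (positive part via moment bounds on $\mathbb{E}[\lVert w\rVert_{B_p^\tau(\R^d;\rho)}^p]$ with the even-moment Lemma~\ref{lemma:boundcoefficientfinitemoments} and embeddings for $p=\infty$; negative decay via the scale-$0$ father-wavelet layer, disjoint supports and Lemma~\ref{lemma:sumiid}) matches the paper for those two cases, and your Lyapunov interpolation for $\indlocsup\le p\le 2$ is a plausible substitute for the paper's citation of \cite{Fageot2017multidimensional}. The negative smoothness step, however, has a genuine gap. Your only quantitative mechanism for non-membership is the moment lower bound of Lemma~\ref{lemma:lowerbound}, whose scale dependence $2^{jdp(1/2-1/\indlocinf)}$, fed into the block-sum/Chebyshev machine, can only ever produce divergence for $\tau>d/\indlocinf-d$, and which requires $\indlocinf>0$ to begin with. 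The statement claims non-membership down to the strictly smaller threshold $d/p-d$ whenever $p>\indlocinf$. Embeddings cannot bridge this: non-membership propagates only to \emph{larger} smoothness (the red regions of Figure~\ref{fig:embeddings} lie at or above the starting value of $\tau$; the slope-$d$ trade-off $\tau_1=\tau-d/p+d/p_1$ you invoke is the geometry of the \emph{green} region and goes the wrong way for a non-inclusion). So the range $d/p-d<\tau\le d/\indlocinf-d$ is unreachable by your plan, and in the common case $\indlocinf=0$ (Laplace, symmetric Gamma) you prove no negative smoothness result at all.

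The missing ingredient is the paper's separate argument for the regime $\tau>d/p-d$, which does not use Lemma~\ref{lemma:lowerbound} at all. Write $w=w_1+w_2$ with $w_1$ a nontrivial compound Poisson part and $w_2$ of finite variance, fix an atom $a_0\delta(\cdot-\bm{x}_0)$ of $w_1$ with $\abs{a_0}\ge 1$, and track it with the unnormalized wavelets $\psi_{M^d}(2^j\cdot-\bm{k}_j)$, with $\bm{k}_j$ chosen so that $2^j\bm{x}_0-\bm{k}_j$ stays in a region where $\abs{\psi_{M^d}}\ge C$. For $j$ large no other atom lies in the support, so $\abs{\langle w_1,\psi_{M^d}(2^j\cdot-\bm{k}_j)\rangle}\ge C$, while Chebyshev and Borel--Cantelli (using $\mathbb{E}[\langle w_2,\psi_{M^d}(2^j\cdot-\bm{k}_j)\rangle^2]=\sigma_0^2\lVert\psi_{M^d}\rVert_2^2\,2^{-jd}$, which is summable) give $\abs{\langle w_2,\psi_{M^d}(2^j\cdot-\bm{k}_j)\rangle}<C/2$ eventually. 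A single surviving coefficient per scale then contributes $2^{j(\tau p-d+dp)}(C/2)^p$ to the quasi-norm, unbounded exactly when $\tau>d/p-d$. Without this (or an equivalent use of the atoms of the noise) the claimed threshold $d/\max(p,\indlocinf)-d$ is out of reach. A secondary issue of the same flavour affects your positive part: the interpolation needs a base exponent $p_0<\indlocsup$ carrying a decaying moment bound, which does not exist when $\indlocsup=0$; there you need a $2^{jd(p/2-1)}$-type estimate as in Lemma~\ref{lemma:momentPoisson} (or Theorem~3 of \cite{Fageot2017multidimensional}) rather than log-convexity against the constant second moment.
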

	
  	As was the case for the Gaussian and compound Poisson cases, Proposition \ref{prop:finiteMomentsBesov} allows to deduce the asymptotic growth rate of L\'evy white noises with finite moments. Moreover, we obtain lower and upper bounds for the local smoothness in terms of the Blumenthal-Getoor indices of the L\'evy white noise.
	
	\begin{corollary} \label{coro:finiteMomentsBesov}
	Let $0< p \leq \infty$ and $w$ be a L\'evy white noise with finite moments and Blumenthal-Getoor indices $0 \leq \indlocinf \leq \indlocsup \leq 2$. Then, we have that
	\begin{equation} \label{eq:taurhofinitemom}
	\frac{d}{\max(p,\indlocsup)} - d  \leq \tau_p(w) \leq \frac{d}{\max(p,\indlocinf)} - d  \quad \text{ and } \quad \rho_p(w) \geq - \frac{d}{ p}.
	\end{equation}
	For $p\in (0,2)$, $p$   an even integer, or $p = \infty$, we moreover have that 
		\begin{equation} 
 \rate_p(w) = - \frac{d}{p}.
 	\end{equation}	
	\end{corollary}
	 
	\begin{proof}[Proof of Proposition \ref{prop:finiteMomentsBesov}]
	We only treat the case $p< \infty$. For $p=\infty$, the result is obtained using embeddings (with $p=2m$, $m\rightarrow \infty$ for positive results and $p\rightarrow \infty$ for negative results) following the same arguments than for the Gaussian case in Proposition \ref{prop:GaussBesov}.\\
	
	\textbf{ If $\reg < d/\max(p,\indlocsup) - d$ and $\rate < - d /p$.}
	We first remark that for $p \leq 2$, we have that $\rate < - \frac{d}{\min(p,2, \indmom)} =   - \frac{d}{\min(p, \indmom)}$, and the result is a consequence of our previous work \cite[Theorem 3]{Fageot2017multidimensional}. We can therefore assume that $p = 2m$  with $m \geq 1$ an integer. Then, $p=2m \geq 2 \geq \indlocsup$, and the conditions on $\reg$ and $\rate$ become  $\reg < \frac{d}{2m} - d$ and $\rate < - \frac{d}{2m}$.
	Due to Proposition \ref{lemma:boundcoefficientfinitemoments}, we have
	\begin{align} \label{eq:seriestobefinite}
		\mathbb{E} \left[\lVert w \rVert_{B_{2m}^\reg(\R^d;\rate)}^{2m}  \right]
			&= \sum_{j \in \N} 2^{j (2m\reg - d + dm)}\sum_{\bm{G} \in \G^j}
		\sum_{\bm{k}\in \Z^d}  \langle 2^{-j}\bm{k} \rangle^{2m\rate} \mathbb{E} [\langle w, \psi_{j,\bm{G},\bm{k}} \rangle^{2m}] \nonumber \\
			& \leq C 2^d \sum_{j \in \N} 2^{j (2m\reg - d + 2dm)} \frac{1}{2^{jd}} \sum_{\bm{k} \in \Z^d}  \langle 2^{-j}\bm{k} \rangle^{2m\rate}. 
	\end{align}
	Then, 
	$ \frac{1}{2^{jd}} \sum_{\bm{k} \in \Z^d}  \langle 2^{-j}\bm{k} \rangle^{2m\rate} \underset{j \rightarrow \infty}{\longrightarrow} \int_{\R^d} \langle \bm{x} \rangle^{2 m \rate} \drm \bm{x} < \infty$ 
	since $2 m \rate < - d$, 
	and the series in \eqref{eq:seriestobefinite} is finite if and only if 
	$2 m \reg - d + 2dm < 0$, what we assumed to be true.
	Finally, we have shown that $\mathbb{E} \left[\lVert w \rVert_{B_{2m}^\reg(\R^d;\rate)}^{2m} \right] < \infty$, so that $w \in B_{2m}^\reg(\R^d;\rate)$ almost surely.  \\
	
	\textbf{Case $\reg > d / p - d$.}
	This part of the proof is actually valid for any L\'evy white noise. It uses the decomposition $w =  w_1 + w_2$ with $w_1$ a nontrivial compound Poisson white noise and $w_2$ a L\'evy white noise with finite moments (see Proposition \ref{prop:LIdecompo}, here $w_2$ combines the Gaussian part and the finite-moment part of the L\'evy-It\^o decomposition). The main idea is that the jumps of the compound Poisson part are by themselves enough to make the Besov norm infinite. 

	One writes that $w_1 \overset{(\mathcal{L})}{=}  \sum_{k\in \mathbb{Z}} a_k \delta(\cdot - \bm{x}_k)$, as in \eqref{eq:poissonwithdirac}. Then, almost surely, all the $a_k$ are nonzero. Let $m>0$ be such that $\mathscr{P} (\abs{a_0} \geq m) > 0$. One can assume without loss of generality that $m=1$ (otherwise, consider the white noise $w/m$). Then, there exists almost surely $k \in \Z$ such that $\abs{a_k} \geq 1$ (it suffices to apply the Borel-Cantelli argument using the independence of the $a_k$). For simplicity, one reorders the jumps such that this is achieved for $k=0$, so that  $\abs{a_0} \geq 1$.

	We first introduce preliminary notations.
	Let $[a,b]$ be a finite interval on which the mother Daubechies wavelet is strictly positive. In particular, $\min_{x \in [a,b)} \lvert \psi_M(x) \rvert := c >0$. Then, setting $K=[a,b)^d$ and $C = c^d$,  we have that $\abs{\psi_{\bm{M}} (\bm{x}) } \geq C$ for every $\bm{x} \in K$, with $\bm{M} = (M,\ldots ,M)$.
	For each scale $j \geq 0$, we define $\bm{k}_j \in \Z^d$ as the unique multi-integer such that $2^j \bm{x}_0 - \bm{k}_j \in [a, a +1)^d$. 
	If $b \geq a + 1$, then $2^j \bm{x}_0 - \bm{k}_j \in K$. Otherwise, due to the law of the location $\bm{x}_0$, there is almost surely an infinity of scale $j \geq 0$ such that $2^j \bm{x}_0 - \bm{k}_j \in [a, b)^d = K$. We denote by $J$ the (random) ensemble of such $j$.
	Then, for each $j \in J$, using that $\lvert a_0 \rvert \geq 1$ and $2^j \bm{x}_0 - \bm{k}_j \in [a, b)^d = K$, we deduce that
	\begin{equation}
	\abs{ a_0 \psi_{\bm{M}}(2^j \bm{x}_0 - \bm{k}_j ) }
			\geq C.
			\end{equation}
	
	Moreover, on each finite interval, there are almost surely finitely many jumps $\bm{x}_k$.
	In particular, 
	the random variable $\inf_{k \in \Z \backslash \{0\} } \lVert \bm{x}_k - \bm{x}_0 \rVert $ is a.s. strictly positive. 
	This implies that
 	there exists a (random) integer  $j_0 \in \N$   such that $2^{j_0} \lVert \bm{x}_k - \bm{x}_0 \rVert  >   \mathrm{diam} ( \mathrm{Supp}( \psi_{\bm{M}} ))$ for any $k \neq 0$, where $\mathrm{diam}(B)$ is the diameter of a Borelian set $B \subset \R^d$, understood as the Lebesgue measure of its closed convex hull.
	We therefore have that $\psi_{\bm{M}}(2^j \bm{x}_k - \bm{k}_j) = 0$ for any $j \geq j_0$ and any $k \neq 0$.
	From these preparatory considerations, one has  a.s. that, for $j \geq j_0$, $j\in J$, 
	\begin{equation} \label{eq:lowerboundpoissonpart}
		\abs{ \langle w_1 , \psi_{\bm{M}}(2^j \cdot - \bm{k}_j ) \rangle}
			= \abs{\sum_{k\in \Z} a_k \psi_{\bm{M}}(2^j \bm{x}_k - \bm{k}_j ) }
			= \abs{ a_0 \psi_{\bm{M}}(2^j \bm{x}_0 - \bm{k}_j ) }
			\geq C.
	\end{equation}	
	
	Let now focus on the L\'evy white noise $w_2$. Since $w_2$ has a finite variance, we have that, using the Markov inequality,
	\begin{equation}
		\mathscr{P} 
		\left( \abs{\langle w_2 , \psi_{\bm{M}}(2^j \cdot - \bm{k}_j) \rangle } \geq \frac{C}{2} \right) 
		\leq \frac{4 \mathbb{E} \left[\langle w_2 , \psi_{\bm{M}}(2^j \cdot - \bm{k}_j) \rangle^2 \right]}{C^2} = \frac{4 \sigma_0^2 \lVert \psi_{\bm{M}} \rVert_2^2}{C^2} 2^{-jd},
	\end{equation}
	where $\sigma_0^2$ is the variance of $w_2$ such that $\mathbb{E}[\langle w_2 , \varphi \rangle^2 ] = \sigma_0^2 \lVert \varphi \rVert_2^2$ for any test function $\varphi$. In particular, $$\sum_{j \in \N} \mathscr{P} \left( \abs{\langle w_2 , \psi_{\bm{M}}(2^j \cdot - \bm{k}_j) \rangle } \geq \frac{C}{2} \right) \leq   \frac{4 \sigma_0^2 \lVert \psi_{\bm{M}} \rVert_2^2}{C^2}  \sum_{j\in \N} 2^{-jd}< \infty.$$ 
	From a new Borel-Cantelli argument, we know that, almost surely, only finitely many $j$ satisfy $\abs{\langle w_2 , \psi_{\bm{M}}(2^j \cdot - \bm{k}_j) \rangle } \geq {C} / {2}$. In particular, there exists a (random) integer $j_1$ such that,  for any $j\geq j_1$, $\abs{\langle w_2 , \psi_{\bm{M}}(2^j \cdot - \bm{k}_j) \rangle } \leq {C} / {2}$. Combining this to \eqref{eq:lowerboundpoissonpart}, we then have that, for  $j \in J$ such that $j \geq \max(j_0,j_1)$,
	\begin{align} \label{eq:etvoila}
	\abs{\langle w , \psi_{\bm{M}} (2^{j} \cdot -  \bm{k}_j )\rangle }
	& \geq \abs{\langle w_1 , \psi_{\bm{M}} (2^{j} \cdot -  \bm{k}_j )\rangle } - \abs{\langle w_2 , \psi_{\bm{M}} (2^{j} \cdot -  \bm{k}_j )\rangle }   \geq C - C/2 = C/2.
	\end{align}
	Note moreover that, by definition of $\bm{k}_j$, $2^j \bm{x}_0 - \bm{k}_j \in [a,a+1)^d$, hence we have that $\lVert \bm{x}_0 - 2^{-j} \bm{k}_j \rVert_\infty \leq M/2^j$ where $M = \max ( \abs{a}, \abs{a+1})$.  Then, there exists a (random) integer $j_2 \geq 1$   such that, for every $j \geq j_2$, $M/2^j \leq \lVert \bm{x}_0\rVert_\infty$.	
	We have moreover that $\lVert 2^{-j} \bm{k}_j \rVert_2 \leq d^{1/2} \lVert 2^{-j} \bm{k}_j \rVert_\infty \leq d^{1/2} (M / 2^j + \lVert \bm{x}_0 \rVert_\infty)$. 
	Therefore, recalling that $\rho < 0$, for every $j \geq j_2$, we have
	\begin{equation} \label{eq:encoreune}
		\langle 2^{-j} \bm{k}_j \rangle^{\rho p} \geq (1 + d (M/2^j+ \lVert \bm{x}_0 \rVert_\infty)^2)^{\rho p /2} \geq  (1 + 4d  \lVert \bm{x}_0 \rVert_\infty^2)^{\rho p /2}.
	\end{equation}
	Putting the pieces together, we can now lower bound the Besov norm of $w$ by keeping only the mother wavelet $\psi_{\bm{M}}$, a scale $j \in J$ such that $j \geq \max(j_0, j_1,j_2)$, and the corresponding shift parameter $ \bm{k}_j$. 
		Then, combining \eqref{eq:etvoila} and \eqref{eq:encoreune}, we obtain the almost-sure lower bound
	\begin{align}
		\lVert w \rVert_{B_p^\tau(\R^d;\rho)}^p 
		&\geq 
		2^{j (\tau p - d + dp)} \langle 2^{-j} \bm{k}_j \rangle^{p \rho} \abs{\langle w , \psi_{\bm{M}}(2^{j} \cdot -  \bm{k}_j) \rangle }^p \nonumber \\
	& \geq   2^{j (\tau p - d + dp)} \left(C/2 \right)^p (1 + 4d  \lVert \bm{x}_0 \rVert_\infty^2)^{\rho p /2}.
	\end{align} 
	This is valid for any $j \in J$ such that $j \geq 	\max(j_0, j_1,j_2)$ and because $J$ is infinite and $(\tau p - d + dp) > 0$, one concludes that $\lVert w \rVert_{B_p^\tau(\R^d;\rho)}^p = \infty$ almost surely.\\

	\textbf{Case $0 < p < \indlocinf$ and $( d /  \indlocinf     - d) < \tau < (d/p - d)$.} Assume that, under those assumptions, we prove that $w \notin  B_p^{\tau}(\R^d;\rho)$ a.s. Then, together the case $\tau > (d/p - d)$ considered below and using embeddings, we deduce the expected result for $\tau > p(d / \max( \indlocinf , p) - d)$.
	
	As soon as $f \notin B_p^{\tau}(\R^d;\rho)$ for some $p>0$, we also have that $f \notin B_q^{\tau + \epsilon}(\R^d;\rho)$ for any $q>p$ and $\epsilon > 0$ (see Figure \ref{fig:embeddings}). A crucial consequence for us is that it suffices to work with arbitrarily small $p$ in order to obtain the negative result we expect. We assume here that
	\begin{equation} \label{eq:troisconditionsurp}
	 p < \indlocinf / 2, \quad p < \frac{\indlocinf \indlocsup}{2(\indlocsup - \indlocinf)}.
	\end{equation}
	Note that the right inequality in \eqref{eq:troisconditionsurp}   simply means that $p< \infty$ (\emph{i,e.}, no restriction) when $\indlocinf = \indlocsup$. 
	We fix  $k_0 \in \N \backslash\{0\}$ such that, for any gender $\bm{G}$, the functions $\Psi_{0, G, k_0\bm{k}}$ have disjoint support for every $\bm{k} \in \Z^d$. Then, at fixed $\bm{G}$ and $j$, the  random variables $(\langle w , \Psi_{j, G, \bm{k}}\rangle)_{\bm{k} \in k_0 \Z^d}$ are independent.
	By restricting the range of $\bm{k}$ and the gender to $\bm{G} = \bm{M}$, we have that
	\begin{equation} \label{eq:lowernormbesov}
		\lVert w \rVert_{B_p^\tau(\R^d;\rho)}^p \geq  C \sum_{j \in \N} 2^{j ( \tau p - d + dp /2)} \sum_{\bm{k}\in k_0\Z^d, 0 \leq k_i < k_0 2^j } \abs{\langle w,  \psi_{j,\bm{M},\bm{k}} \rangle}^p,
	\end{equation}
	with $C = \inf_{\lVert \bm{x}\rVert_\infty \leq k_0} \langle \bm{x} \rangle^{\rate} > 0$ is such that   $\langle 2^{-j} \bm{k} \rangle \geq C$ for any $\bm{k} \in  k_0 \{ 0, \ldots  2^j-1\}^d$ and any $j \geq 0$.
	We set $X_{j,\bm{k}} = 2^{jd \left( \frac{1}{\indlocinf} - \frac{1}{2}\right)}    {\langle w, \psi_{j,\bm{M},\bm{k}}\rangle}$ and
	$$M_{j,p} := 2^{-jd} \sum_{\bm{k}\in k_0\Z^d, 0 \leq k_i < k_0 2^j} \abs{X_{j,\bm{k}}}^p,$$
	which is an average among $2^{jd}$ independent random variables. 
	
	Recall that $p <   \indlocinf / 2$. Moreover, since all the moments are finite, $\indmom = \infty > \indlocsup$. Hence, one can apply Theorem \ref{lemma:lowerbound} with integrability parameters  $q = p$ and $q = 2p$, respectively. There exists $\epsilon>0$ that can be choosen arbitrarily small and constants $m_q, M_q$ such that
	\begin{equation}
	m_q 2^{-j \epsilon} 2^{jqd \left(\frac{1}{2} - \frac{1}{\indlocinf}\right)}
	\leq 
	\mathbb{E} \left[\abs{\langle w , \psi_{j,\bm{M},\bm{k}}\rangle}^q\right]
	\leq 
	M_q 2^{j \epsilon} 2^{jqd \left(\frac{1}{2} - \frac{1}{\indlocsup}\right)}
	\end{equation}
	for any $j \in \N, \bm{k}\in \Z^d$.
	In particular, with our notations, we have that
	\begin{equation} \label{eq:boundtoprove}
		m_q 2^{-j \epsilon} \leq \mathbb{E} [M_{j,q}] \leq 
			M_q 2^{j \epsilon} 2^{jqd \left(\frac{1}{\indlocinf} - \frac{1}{\indlocsup}\right)}
	\end{equation}
	for any $j, \bm{k}$ and for $q=p$ or $q=2p$.
	Then, we control the variance of $M_{j,q}$ as follows:
	\begin{align}\label{eq:boundtouse}
		\mathrm{Var}(M_{j,p}) 
		&= 
		\mathbb{E} \left[(M_{j,p} - \mathbb{E}[M_{j,p}])^2  \right]
		\overset{(i)}{=}
2^{-jd} \mathbb{E} \left[2^{-jd}  \left( \sum_{\bm{k}\in k_0\Z^d, 0 \leq k_i < k_0 2^j} \left(\abs{X_{j,\bm{k}}}^p - \mathbb{E}[\abs{X_{j,\bm{k}}}^p] \right) \right)^2 \right] \nonumber  \\
		&\overset{(ii)}{=}
		 2^{-jd} \mathbb{E} \left[2^{-jd}   \sum_{\bm{k}\in k_0\Z^d, 0 \leq k_i < k_0 2^j} \left( (\abs{X_{j,\bm{k}}}^p - \mathbb{E}[\abs{X_{j,\bm{k}}}^p]) \right)^2 \right] \nonumber  \\
		& \overset{(iii)}{\leq}		 
		 2^{-jd} \mathbb{E} \left[2^{-jd}   \sum_{\bm{k}\in k_0\Z^d, 0 \leq k_i < k_0 2^j}  \abs{X_{j,\bm{k}}}^{2p} \right]  = 2^{-jd} \mathbb{E}[M_{j,2p}] \nonumber \\
		 & \overset{(iv)}{\leq} 
		 2^{-jd} 2^{j\epsilon} 2^{2jpd \left(\frac{1}{\indlocinf} - \frac{1}{\indlocsup}\right)},
	\end{align}
	where we used that $	 \mathbb{E}[M_{j,p}] =   \mathbb{E}[ \abs{X_{j,\bm{k}}}^p ]$ for every $\bm{k}$ in $(i)$, the independence of the $X_{j,\bm{k}}$ in $(ii)$,   the relation $\mathrm{Var}(X) \leq \mathbb{E} [X^2]$ in $(iii)$, and the right side of \eqref{eq:boundtoprove} with $q = 2p$ in $(iv)$. 
	
	We then apply the Chebyshev's inequality $\mathscr{P}( \abs{X- \mathbb{E}[X]} \geq x )\leq \mathrm{Var} X / x^2$ to $X=M_{j,p}$ and $x = 2^{-j\epsilon} m_p / 2$ to get
	\begin{align} \label{eq:trucmachin}
	\mathscr{P}( \abs{M_{j,p}- \mathbb{E}[M_{j,p}]} \geq 2^{-j\epsilon} m_p / 2 )
	&\leq \frac{4 \mathrm{Var}(M_{j,p})}{m_p^2} 2^{2j \epsilon}  \nonumber \\
	&	\leq 
	\frac{4 M_{2p}}{m_p^2} 2^{j \left( d ( 2p ( 1 / \indlocinf - 1 / \indlocsup) - 1 ) +  3\epsilon \right)}
	\end{align}
	where we used \eqref{eq:boundtouse} in the last inequality. 
	Due to the second inequality in \eqref{eq:troisconditionsurp}, we have that
	\begin{equation}
	2p \left( \frac{1}{\indlocinf} - \frac{1}{\indlocsup} \right) - 1 <0.
	\end{equation}	
	Hence, the exponent $d ( 2p ( 1 / \indlocinf - 1 / \indlocsup) - 1 ) +  3\epsilon$ in \eqref{eq:trucmachin} is strictly negative for $\epsilon$ small enough, what we assume from now. Therefore, $\sum_j \mathscr{P}( \abs{M_{j,p}- \mathbb{E}[M_{j,p}]}\geq 2^{-j\epsilon} m_p / 2 ) < \infty$. From the Borel-Cantelli lemma, only a finite number of such $j$ can therefore satisfy the relation $\abs{M_{j,p}- \mathbb{E}[M_{j,p}]}\geq 2^{-j\epsilon} m_p / 2$. A   consequence is then that there exists almost surely a random $J \in \N$ such that for every $j \geq J$, 
		\begin{equation}
			M_{j,p} \geq \mathbb{E} [M_{j,p}] - \abs{M_{j,p} - \mathbb{E} [M_{j,p}] } \geq m_p 2^{-j\epsilon} - \frac{m_p}{2} 2^{-j\epsilon} = \frac{m_p}{2} 2^{-j \epsilon},
		\end{equation} 
		where we used the lower bound in \eqref{eq:boundtoprove} with $q = p$.
	We deduce that
\begin{equation*}
	\lVert w \rVert_{B_p^\tau(\R^d;\rho)}^p 
	\geq 
	C \sum_{j\geq J} 2^{j p (\tau - d + d / \indlocinf)} M_{j,p} 
	\geq 
	\frac{C m_p}{2} \sum_{j\geq J} 2^{j p (\tau - d + d / \indlocinf- \epsilon)}.
	\end{equation*}		
	For $\epsilon$ small enough, we have that $(\tau + d - d /\indlocinf - \epsilon) > 0$ and, therefore, that $\lVert w \rVert_{B_p^\tau(\R^d;\rho)}^p  = \infty$ almost surely. 	\\
	

	\textbf{ If $\rate \geq  - d /p$.} This case has been treated in full generality in Proposition \ref{prop:uneptiteprop}.
	\end{proof}

\section{L\'evy White Noise: the General Case} \label{sec:general}

This section gives us the opportunity to consolidate the results and to deduce the general case from the previous ones. 
We say that a L\'evy white noise $w$ is \emph{non-Gaussian} if its L\'evy measure is not identically zero. In particular, $w$ can have a Gaussian part in the L\'evy-It\^o decomposition (see Proposition \ref{prop:LIdecompo}). 
Proposition~\ref{prop:sparsenoise} characterizes the Besov regularity of non-Gaussian L\'evy white noises, the Gaussian white noise having already been treated in Section \ref{sec:Gaussian}.
We conclude this section with the proof of Theorem \ref{maintheo:regdecaylimit}.

\begin{proposition} \label{prop:sparsenoise}
Fix $0 < p \leq \infty$ and $\tau , \rho \in \R$. 
	Consider a non-Gaussian L\'evy white noise $w$  with Blumenthal-Getoor and moment indices $0\leq \indlocinf \leq \indlocsup \leq 2$ and $0 < \indmom \leq \infty$.
		Then, $w$ is
		\begin{itemize}
		\item  almost surely in $B_p^\reg(\R^d;\rate)$ 
		if $\reg < d/\max(p,\indlocsup) - d$ 
		and $\rate < - d /\min(p,\indmom)$, for $0<p\leq 2$, $p$ an even integer, or $p=\infty$; and 
		\item almost surely not in $B_p^\reg(\R^d;\rate)$
		if $\reg > d/\max(p,\indlocinf) - d$ 
		or $\rate > - d /\min(p,\indmom)$ for every $0 < p \leq \infty$.
		\end{itemize}
\end{proposition}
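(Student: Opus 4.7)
The plan is to reduce everything to the already-handled cases (Propositions \ref{prop:PoissonBesov} and \ref{prop:finiteMomentsBesov}) through the L\'evy--It\^o decomposition. By Proposition \ref{prop:decomposition}, since $w$ is non-Gaussian, we can write $w = w_2 + w_3$ with $w_2$ a compound Poisson noise, $w_3$ a L\'evy white noise with finite moments, the two being independent, together with the index identities $\indmom(w) = \indmom(w_2)$, $\indlocsup(w) = \indlocsup(w_3)$, and $\indlocinf(w) = \indlocinf(w_3)$.

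For the positive part, assume $\tau < d/\max(p, \indlocsup(w)) - d$ and $\rho < -d/\min(p, \indmom(w))$. Then $\tau < d/p - d$, so Proposition \ref{prop:PoissonBesov} gives $w_2 \in B_p^\tau(\R^d;\rho)$ almost surely; and $\rho < -d/p$, so Proposition \ref{prop:finiteMomentsBesov} (under its stated restriction on $p$) gives $w_3 \in B_p^\tau(\R^d;\rho)$ almost surely. Linearity of Besov spaces yields $w = w_2 + w_3 \in B_p^\tau(\R^d;\rho)$ almost surely.

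For the negative part, suppose $\tau > d/\max(p, \indlocinf(w)) - d$ or $\rho > -d/\min(p, \indmom(w))$. The $\rho$-regime is direct: the two Borel--Cantelli arguments in the proof of Proposition \ref{prop:PoissonBesov} (the $\rho \geq -d/p$ case invoking Lemma \ref{lemma:sumiid} on the i.i.d. father-wavelet coefficients $\langle w,\phi_{\bm{k}}\rangle$ for $\bm{k}$ in a sufficiently sparse sublattice, and the $\rho > -d/\indmom$ case exploiting $\mathbb{E}[\abs{\langle w,\phi\rangle}^q]=\infty$ for $q > \indmom(w)$) rely only on stationarity, on disjoint-support independence, and on the moment index, all of which hold for $w$. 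In the $\tau$-regime, when $\tau \geq d/p - d$ I invoke the general jumps argument from the proof of Proposition \ref{prop:finiteMomentsBesov}, explicitly valid for any L\'evy noise with a non-trivial compound Poisson component (the degenerate case $w_2 = 0$ reduces to a direct application of Proposition \ref{prop:finiteMomentsBesov} to $w = w_3$). The remaining range $d/\indlocinf(w) - d < \tau < d/p - d$ --- which forces $p < \indlocinf(w)$ --- is the main technical subtlety: neither summand alone blocks membership of $w$. Proposition \ref{prop:finiteMomentsBesov} nonetheless ensures $w_3 \notin B_p^\tau(\R^d;\rho')$ almost surely for \emph{every} $\rho' \in \R$; I then pick $\rho' \leq \rho$ small enough that $\rho' < -d/\min(p, \indmom(w_2))$, so that Proposition \ref{prop:PoissonBesov} yields $w_2 \in B_p^\tau(\R^d;\rho')$ almost surely. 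If $w$ belonged to $B_p^\tau(\R^d;\rho)$, the embedding $B_p^\tau(\R^d;\rho) \subseteq B_p^\tau(\R^d;\rho')$ from Proposition \ref{prop:besovembed} and linearity would force $w_3 = w - w_2 \in B_p^\tau(\R^d;\rho')$, a contradiction.
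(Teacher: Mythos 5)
Your proof is correct and shares the paper's overall skeleton---decompose $w$ into a compound Poisson part plus a finite-moments part via Proposition \ref{prop:decomposition} and reduce to Propositions \ref{prop:PoissonBesov} and \ref{prop:finiteMomentsBesov}---but you and the paper swap strategies on the two delicate negative sub-cases. For the smoothness range $d/\indlocinf - d < \tau < d/p - d$ (so $p < \indlocinf$), the paper simply asserts that the corresponding argument of Proposition \ref{prop:finiteMomentsBesov} ``is still valid for $w$''; this is terser than it looks, because that argument rests on Lemma \ref{lemma:lowerbound}, which is stated under the hypothesis $\indlocsup < \indmom$ --- a condition a general non-Gaussian noise need not satisfy --- so one must implicitly re-check that taking $p$ small enough (in particular $2p < \indmom$) rescues the estimates. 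Your subtraction argument (the finite-moments part is not in $B_p^\tau(\R^d;\rho')$ for \emph{any} weight $\rho'$, the compound Poisson part is in $B_p^\tau(\R^d;\rho')$ for $\rho'$ small enough, hence their sum cannot lie in $B_p^\tau(\R^d;\rho) \subseteq B_p^\tau(\R^d;\rho')$) sidesteps this entirely by invoking the technical lemma only on the finite-moments part, where its hypotheses hold trivially since $\indmom = \infty$ there; this is arguably the cleaner route. Conversely, for the growth range $-d/\indmom < \rho < -d/p$ the paper uses precisely this kind of one-part-in, one-part-out argument, whereas you re-run the Borel--Cantelli/infinite-moment computation on $w$ directly; your justification is sound, since that computation uses only stationarity, disjoint-support independence, and $\mathbb{E}[\abs{\langle w,\phi\rangle}^q]=\infty$ for $q>\indmom$, all of which hold for the full noise. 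Two cosmetic remarks: the embedding $B_p^\tau(\R^d;\rho) \subseteq B_p^\tau(\R^d;\rho')$ for $\rho' \leq \rho$ is the elementary monotonicity in the weight stated in the introduction (immediate from the wavelet norm, since $\langle \cdot \rangle \geq 1$), not an instance of Proposition \ref{prop:besovembed}, which is formulated for $p_0 < p_1$ with strict inequalities; and your final contradiction should be read on the intersection of the two almost-sure events (compound Poisson part in, finite-moments part out), which is of course still of full measure.
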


Proposition \ref{prop:sparsenoise} reveals new information on the local smoothness and the asymptotic growth rate of non-Gaussian L\'evy white noises.
	\begin{corollary} \label{coro:sparsenoise}
	Let $0< p \leq \infty$ and $w$ be a L\'evy white noise with finite moments and Blumenthal-Getoor indices $0 \leq \indlocinf \leq \indlocsup \leq 2$. Then, we have that
	\begin{equation} \label{eq:taurhofinitemom}
	\frac{d}{\max(p,\indlocsup)} - d  \leq \tau_p(w) \leq \frac{d}{\max(p,\indlocinf)} - d  \quad \text{ and } \quad \rho_p(w) = - \frac{d}{\min (p,\indmom)}.
	\end{equation}
	\end{corollary}
	
\begin{proof}[Proof of Proposition \ref{prop:sparsenoise}]
	According to Proposition \ref{prop:LIdecompo}, $w = w_1 + w_2$ with $w_1$ a compound Poisson white noise, $w_2$ a L\'evy white noise with finite moments (which can include a Gaussian part). 
	Moreover, due to Proposition \ref{prop:decomposition}, we have that
		\begin{align}
		\indlocsup 	&= \indlocsup(w_2) \geq \indlocsup(w_1) = 0, \text{ and} \\
		\indmom 	&= \indmom (w_1) \leq \indmom(w_2) =   \infty.
	\end{align}
	
	\textbf{Case  $\reg < \left( \frac{d}{\max(p,\indlocsup)} - d \right) $ and $\rate < - \frac{d}{\min(p,\indmom)}$. }
	Then, $\reg < \left( \frac{d}{p} - d \right) $ and $\rate < - \frac{d}{\min(p,\indmom(w_1))}$ so that $w_1 \in B_{p}^{\reg}(\R^d;\rate)$ due to Proposition \ref{prop:PoissonBesov}. Similarly, $w_2 \in B_{p}^{\reg}(\R^d;\rate)$ due to Proposition \ref{prop:finiteMomentsBesov}. Finally, $w = w_1 + w_2 \in B_{p}^{\reg}(\R^d;\rate)$. \\
	
\textbf{Case  $\reg > \left( \frac{d}{\max(p,\indlocsup)} - d \right) $.} The arguments of Proposition \ref{prop:finiteMomentsBesov} for this case are still valid for $w$.	\\	
	
	\textbf{Case  $\rate >  - \frac{d}{\min(p,\indmom)}$. }
	The case $\rate \geq - d / p$ has been treated in Proposition \ref{prop:uneptiteprop}.
	We therefore already know that $w \notin B_p^\reg(\R^d;\rate)$ if $\tau > d / \max(p,\indlocsup) - d$ or if $\rate \geq - d /p$. The only remaining case is when $p  > \indmom$, $ \tau < d / \max(p,\indlocsup) - d$, and $- d / \indmom < \rate < - d / p$. In this case, $w_2 \in B_p^\reg(\R^d;\rate)$ from Proposition \ref{prop:finiteMomentsBesov}, while $w_1 \notin B_p^\reg(\R^d;\rate)$ with Proposition \ref{prop:PoissonBesov} due to the condition $\rate > - d / \indmom$. Finally, $w \notin B_p^\reg(\R^d;\rate)$ a.s. as the sum of an element a.s. in $B_p^\reg(\R^d;\rate)$ and an element a.s. not in $B_p^\reg(\R^d;\rate)$.
	\end{proof}

Finally, we can translate our results in terms of the local smoothness $\tau_p(w)$ and the asymptotic growth rate $\rho_p(w)$ of L\'evy white noises.

\begin{proof}[Proof of Theorem \ref{maintheo:regdecaylimit}]
	The values of $\tau_p(w)$ and $\rho_p(w)$ are directly deduced from Propositions \ref{prop:GaussBesov}, \ref{prop:PoissonBesov}, and  \ref{prop:sparsenoise}.  
	Positive results ($w \in B_p^{\tau}(\R^d;\rho)$) directly give lower bounds for $\tau_p(w)$ and $\rho_p(w)$, while negative results  ($w \notin B_p^{\tau}(\R^d;\rho)$) provide upper bounds. 
	For the Gaussian and compound Poisson cases, studied separatly, there are no restrictions on $0<p \leq \infty$. The case of a general non-Gaussian L\'evy white noise is deduced from Proposition \ref{prop:sparsenoise}.
\end{proof}

\section{Discussion and Examples} \label{sec:blabla}

	\subsection{Application to  Subfamilies of L\'evy White Noises} \label{subsec:examples}
	
		We apply Theorem \ref{maintheo:regdecaylimit} to deduce the local smoothness and asymptotic growth rate of specific L\'evy white noises. We consider Gaussian, symmetric-$\alpha$-stable~\cite{Taqqu1994stable}, symmetric Gamma (including Laplace)~\cite{Koltz2001laplace}, compound Poisson, inverse Gaussian~\cite{Barndorff1997processes}, and layered stable white noises~\cite{Houdre2007layered}. All the underlying laws are known to be infinitely divisible \cite{Houdre2007layered,Sato1994levy}.
		In Table \ref{table:noises}, we define the different families in terms of the characteristic function of $X= \langle w , \One_{[0,1]^d} \rangle$ and give adequate references.
		Most of these families together with the convention we are following in this paper are introduced and detailed in \cite[Section 5.1]{fageot2019scaling} and~\cite[Section 2.1.3]{Fageotthese}. 
		
		The layered stable white noises have the particularity of describing the complete spectrum of possible couples  $(\alpha_1,\alpha_2) = (\indlocsup,\indmom) \in (0,2)^2$. The characteristic exponent of a layered stable white noise is
		\begin{equation}\label{eq:layeredpsi}
		\Psi_{\alpha_1,\alpha_2} (\xi) = \int_{\R} (\cos(t \xi) - 1 ) \left( \One_{\abs{t}\leq1} \abs{t}^{-(\alpha_1+1)} + \One_{\abs{t}>1} \abs{t}^{-(\alpha_2+1)} \right) \mathrm{d} t.
		\end{equation}	
	
We also provide a visualization of our results in terms of Triebel diagrams. In Figures \ref{fig:taurhogauss} to \ref{fig:taurhogenerallevy}, we plot the local smoothness $\frac{1}{p} \mapsto \tau_p(w)$ and asymptotic growth rate $\frac{1}{p} \mapsto \rho_p(w)$ for different L\'evy white noises (with the exception of $\tau_p(w)$ which is not fully determined for the general case in Figure \ref{fig:taurhogenerallevy}; here, we represent the lower and upper bounds of \eqref{eq:LNtaurho}). A given noise is almost surely in a Besov space $B_p^\reg(\R^d;\rho)$ if the points $(1/p,\reg)$ and $(1/p,\rate)$ are in the lower shaded green regions. \emph{A contrario}, the L\'evy white noise is almost surely not in $B_p^\reg(\R^d;\rho)$ if $(1/p,\reg)$ or $(1/p,\rate)$ are in the upper shaded red region. In Figure~\ref{fig:taurhogenerallevy}, the white region corresponds to the case where we do not know if the L\'evy white noise is or is not in the corresponding Besov spaces, a situation that is examplified in Section  \ref{subsec:handcrafted} and discussed in Section \ref{sec:conclusiveremarks}.
	{In this diagrams, we moreover assume that our lower bound \eqref{eq:LNtaubelow} is valid for any $p>0$, including no even integers when $p\geq 2$. This conjectural point is discussed in Section \ref{sec:conclusiveremarks}.}

\begin{table*}[t!] 
\centering
\caption{L\'evy White Noises and their Indices}

\begin{tabular}{cccccc} 
\hline
\hline\\[-2ex] 
White noise & Parameters &  $\CF_X(\xi)$  & $\indlocsup = \indlocinf$ & $\indmom$   \\
\\
[-2ex] 
\hline\\[-1.5ex]
Gaussian    & $\sigma^2>0$ & $\mathrm{e}^{- \sigma^2 \omega^2 /2}$ & $2$ & $\infty$  \\[+1ex]
Cauchy   \cite{Taqqu1994stable}  & $\gamma > 0$ & $\mathrm{e}^{- \gamma \lvert \xi \rvert}$    & $1$ & $1$   \\[+1ex]
S$\alpha$S  \cite{Taqqu1994stable}  & $0<\alpha<2$ & $\mathrm{e}^{-\lvert \xi \rvert^\alpha}$    & $\alpha$ & $\alpha$   \\[+1ex]
sum of S$\alpha$S	& $0 < \alpha_1 , \alpha_2 \leq 2$ & $\mathrm{e}^{-\lvert \xi \rvert^{\alpha_1} - \lvert \xi \rvert^{\alpha_2}}$ & $\max(\alpha_1, \alpha_2)$ & $\min(\alpha_1, \alpha_2)$   \\[+1ex]
Laplace \cite{Koltz2001laplace}  & $\sigma^2 > 0$ & $(1 + \sigma^2\xi/2)^{-1}$ & $0$ & $\infty$ \\[+1ex]
symmetric Gamma  \cite{Koltz2001laplace} & $\sigma^2 , \lambda > 0$ & $(1 + \sigma^2\xi/2)^{-\lambda}$ & $0$ & $\infty$ \\[+1ex]
compound Poisson & $\lambda >0$ & $\mathrm{e}^{\lambda (\widehat{{P}} (\xi) -1 )}$ & $0$ & $\infty$ \\
with finite moments  & ${P}$ & &  \\[+1ex]
layered stable \cite{Houdre2007layered} & $0 < \alpha_1 , \alpha_2 < 2$ &    see \eqref{eq:layeredpsi} & $\alpha_1$ & $\alpha_2$ \\[+1ex]
inverse Gaussian \cite{Barndorff1997processes} &  -  & $\mathrm{e}^{1 - (1 - 2 \mathrm{i} \xi)^{1/2}}$ & $1/2$ & $\infty$  \\[+1ex]
with finite moment & $(\mu,\sigma^2,\nu)$ & $\mathrm{e}^{\psi(\xi)}$ with $\psi$  & $2$ & $\infty$ \\
and Gaussian part  &   & given by \eqref{eq:LK} &   &\\
\hline
\hline
\end{tabular} \label{table:noises}
\end{table*}

\begin{figure}[h!] 
\centering
\begin{subfigure}[b]{0.30\textwidth}
\begin{tikzpicture}[x=2cm,y=2cm,scale=0.42]

\fill[green, opacity= 0.3] (0,-0.75) -- (3,-0.75) -- (3,-2)  -- (0,-2) -- cycle; 
\fill[red, opacity= 0.3] (0,-0.75) -- (3,-0.75) -- (3,1.5) -- (0,1.5) -- cycle; 
\draw[thick, ->] (0,0)--(3,0) node[circle,right] {$\frac{1}{p}$} ;
\draw[thick, ->] (0,-2)--(0,1.5) node[circle,above] {$\reg$} ;
\draw[ thick,color=black] (-0.05,0) -- (0.05,0)  node[black,left] { $0$};
\draw[ thick,color=black] (-0.05,-1.5) -- (0.05,-1.5)  node[black,left] { $-d$};
\draw[ thick,color=black](1.5,-0.05) -- (1.5,0.1)  node[black,above] { $1$};
\draw[black, thick, ->](0,-0.75) --(3,-0.75);
\draw[ thick,color=black] (-0.05,-0.75) -- (0.05,-0.75)  node[black,left] { $-\frac{d}{2}$};

\end{tikzpicture}
\end{subfigure}
\begin{subfigure}[b]{0.30\textwidth}
\begin{tikzpicture}[x=2cm,y=2cm,scale=0.42]

\fill[green, opacity= 0.3] (0,1) -- (3,-2)  -- (0,-2) -- cycle; 
\fill[red, opacity= 0.3] (0,1) -- (3,-2) -- (3,1.5) -- (0,1.5) -- cycle; 
\draw[thick, ->] (0,1)--(3,1) node[circle,right] {$\frac{1}{p}$} ;
\draw[thick, ->] (0,-2)--(0,1.5) node[circle,above] {$\rate$} ;
\draw[ thick,color=black] (-0.05,1) -- (0.05,1)  node[black,left] { $0$};
\draw[ thick,color=black] (-0.05,-0.5) -- (0.05,-0.5)  node[black,left] { $-d$};
\draw[ thick,color=black](1.5,0.9) -- (1.5,1.1)  node[black,above] { $1$};
\draw[black, thick, ->](0,1) --(3,-2);

\end{tikzpicture}
\end{subfigure}
\caption{Gaussian white noise} \label{fig:taurhogauss}

\begin{subfigure}[b]{0.30\textwidth}
\begin{tikzpicture}[x=2cm,y=2cm,scale=0.42]

\fill[green, opacity= 0.3] (0,-1.5) -- (2.25,0.75) -- (3,0.75)  -- (3,-2) --  (0,-2) -- cycle; 
\fill[red, opacity= 0.3] (0,-1.5) -- (2.25,0.75) -- (3,0.75) -- (3,1.5) -- (0,1.5) -- cycle; 
\draw[thick, ->] (0,0)--(3,0) node[circle,right] {$\frac{1}{p}$} ;
\draw[thick, ->] (0,-2)--(0,1.5) node[circle,above] {$\reg$} ;
\draw[ thick,color=black] (-0.05,0) -- (0.05,0)  node[black,left] { $0$};
\draw[ thick,color=black] (-0.05,-1.5) -- (0.05,-1.5)  node[black,left] { $-d$};
\draw[ thick,color=black](1.5,-0.05) -- (1.5,0.1)  node[black,above] { $1$};
\draw[ thick,color=black](2.25,-0.05) -- (2.25,0.1)  node[black,below] { \footnotesize{$1/ \alpha$}};
\draw[black, thick, -](0,-1.5) --(2.25,0.75);
\draw[black, thick, ->](2.25,0.75) --(3,0.75);

\end{tikzpicture}
\end{subfigure}
\begin{subfigure}[b]{0.30\textwidth}
\begin{tikzpicture}[x=2cm,y=2cm,scale=0.42]

\fill[green, opacity= 0.3] (0,-1.25) -- (2.25,-1.25) -- (3,-2)  -- (0,-2) -- cycle; 
\fill[red, opacity= 0.3] (0,-1.25) -- (2.25,-1.25) -- (3,-2) -- (3,1.5) -- (0,1.5) -- cycle; 
\draw[thick, ->] (0,1)--(3,1) node[circle,right] {$\frac{1}{p}$} ;
\draw[thick, ->] (0,-2)--(0,1.5) node[circle,above] {$\rate$} ;
\draw[ thick,color=black] (-0.05,1) -- (0.05,1)  node[black,left] { $0$};
\draw[ thick,color=black] (-0.05,-1.25) -- (0.05,-1.25)  node[black,left] { $-\frac{d}{\alpha}$};
\draw[ thick,color=black](2.25,0.9) -- (2.25,1.1)  node[black,above] { \footnotesize{${1/\alpha}$}};
\draw[ thick,color=black] (-0.05,-0.5) -- (0.05,-0.5)  node[black,left] { $-d$};
\draw[ thick,color=black](1.5,0.9) -- (1.5,1.1)  node[black,above] { $1$};
\draw[black, thick, -](0,-1.25) --(2.25,-1.25);
\draw[black, thick, ->](2.25,-1.25) --(3,-2);

\end{tikzpicture}
\end{subfigure}
\caption{S$\alpha$S white noise with $\alpha = 2/3$}

\begin{subfigure}[b]{0.30\textwidth}
\begin{tikzpicture}[x=2cm,y=2cm,scale=0.42]

\fill[green, opacity= 0.3] (0,-1.5) -- (3,1.5)  -- (3,-2) --  (0,-2) -- cycle; 
\fill[red, opacity= 0.3] (0,-1.5)-- (3,1.5) -- (0,1.5) -- cycle; 
\draw[thick, ->] (0,0)--(3,0) node[circle,right] {$\frac{1}{p}$} ;
\draw[thick, ->] (0,-2)--(0,1.5) node[circle,above] {$\reg$} ;
\draw[ thick,color=black] (-0.05,0) -- (0.05,0)  node[black,left] { $0$};
\draw[ thick,color=black] (-0.05,-1.5) -- (0.05,-1.5)  node[black,left] { $-d$};
\draw[ thick,color=black](1.5,-0.05) -- (1.5,0.1)  node[black,above] { $1$};
\draw[black, thick, ->](0,-1.5)--(3,1.5);

\end{tikzpicture}
\end{subfigure}
\begin{subfigure}[b]{0.30\textwidth}
\begin{tikzpicture}[x=2cm,y=2cm,scale=0.42]

\fill[green, opacity= 0.3] (0,0.30) -- (0.75,0.30)-- (3,-2)  -- (0,-2) -- cycle; 
\fill[red, opacity= 0.3](0,0.30) -- (0.75,0.30)-- (3,-2) -- (3,1.5) -- (0,1.5) -- cycle; 
\draw[thick, ->] (0,1)--(3,1) node[circle,right] {$\frac{1}{p}$} ;
\draw[thick, ->] (0,-2)--(0,1.5) node[circle,above] {$\rate$} ;
\draw[ thick,color=black] (-0.05,1) -- (0.05,1)  node[black,left] { $0$};
\draw[ thick,color=black] (-0.05,0.30) -- (0.05,0.30)  node[black,left] { $-\frac{d}{\indmom}$};
\draw[ thick,color=black](0.75,0.9) -- (0.75,1.1)  node[black,above] { \footnotesize{$1/\indmom$}};
\draw[ thick,color=black] (-0.05,-0.5) -- (0.05,-0.5)  node[black,left] { $-d$};
\draw[ thick,color=black](1.5,0.9) -- (1.5,1.1)  node[black,above] { $1$};
\draw[black, thick, -](0,0.30) --(0.75,0.30);
\draw[black, thick, ->](0.75,0.30) --(3,-2);

\end{tikzpicture}
\end{subfigure}
\caption{compound Poisson white noise with $\indmom = 2$}
\end{figure}

\begin{figure}[h!] 
\centering
\begin{subfigure}[b]{0.30\textwidth}
\begin{tikzpicture}[x=2cm,y=2cm,scale=0.42]

\fill[green, opacity= 0.3] (0,-1.5) -- (3,1.5)  -- (3,-2) --  (0,-2) -- cycle; 
\fill[red, opacity= 0.3] (0,-1.5)-- (3,1.5) -- (0,1.5) -- cycle; 
\draw[thick, ->] (0,0)--(3,0) node[circle,right] {$\frac{1}{p}$} ;
\draw[thick, ->] (0,-2)--(0,1.5) node[circle,above] {$\reg$} ;
\draw[ thick,color=black] (-0.05,0) -- (0.05,0)  node[black,left] { $0$};
\draw[ thick,color=black] (-0.05,-1.5) -- (0.05,-1.5)  node[black,left] { $-d$};
\draw[ thick,color=black](1.5,-0.05) -- (1.5,0.1)  node[black,above] { $1$};
\draw[black, thick, ->](0,-1.5)--(3,1.5);

\end{tikzpicture}
\end{subfigure}
\begin{subfigure}[b]{0.30\textwidth}
\begin{tikzpicture}[x=2cm,y=2cm,scale=0.42]

\fill[green, opacity= 0.3] (0,1) -- (3,-2)  -- (0,-2) -- cycle; 
\fill[red, opacity= 0.3] (0,1) -- (3,-2) -- (3,1.5) -- (0,1.5) -- cycle; 
\draw[thick, ->] (0,1)--(3,1) node[circle,right] {$\frac{1}{p}$} ;
\draw[thick, ->] (0,-2)--(0,1.5) node[circle,above] {$\rate$} ;
\draw[ thick,color=black] (-0.05,1) -- (0.05,1)  node[black,left] { $0$};
\draw[ thick,color=black] (-0.05,-0.5) -- (0.05,-0.5)  node[black,left] { $-d$};
\draw[ thick,color=black](1.5,0.9) -- (1.5,1.1)  node[black,above] { $1$};
\draw[black, thick, ->](0,1) --(3,-2);

\end{tikzpicture}
\end{subfigure}
\caption{Symmetric-Gamma white noise}

\begin{subfigure}[b]{0.30\textwidth}
\begin{tikzpicture}[x=2cm,y=2cm,scale=0.42]

\fill[green, opacity= 0.3] (0,-1.5) -- (0.75,-0.75) -- (3,-0.75)  -- (3,-2) --  (0,-2) -- cycle; 
\fill[red, opacity= 0.3] (0,-1.5) -- (2.25,0.75) -- (3,0.75) -- (3,1.5) -- (0,1.5) -- cycle; 
\draw[thick, ->] (0,0)--(3,0) node[circle,right] {$\frac{1}{p}$} ;
\draw[thick, ->] (0,-2)--(0,1.5) node[circle,above] {$\reg$} ;
\draw[ thick,color=black] (-0.05,0) -- (0.05,0)  node[black,left] { $0$};
\draw[ thick,color=black] (-0.05,-1.5) -- (0.05,-1.5)  node[black,left] { $-d$};
\draw[ thick,color=black](1.5,-0.05) -- (1.5,0.1)  node[black,above] { $1$};
\draw[ thick,color=black](2.25,-0.05) -- (2.25,0.1)  node[black,below] { \footnotesize{$1/\underline{\beta}_\infty$}};
\draw[ thick,color=black](0.75,-0.05) -- (0.75,0.1)  node[black,above] { \footnotesize{$1 / \indlocsup$}};
\draw[black, thick, -](0,-1.5) --(2.25,0.75);
\draw[black, thick, ->](2.25,0.75) --(3,0.75);
\draw[black,  thick, ->](0.75,-0.75) --(3,-0.75);
\end{tikzpicture}
\end{subfigure}
\begin{subfigure}[b]{0.30\textwidth}
\begin{tikzpicture}[x=2cm,y=2cm,scale=0.42]

\fill[green, opacity= 0.3] (0,-1.25) -- (2.25,-1.25) -- (3,-2)  -- (0,-2) -- cycle; 
\fill[red, opacity= 0.3] (0,-1.25) -- (2.25,-1.25) -- (3,-2) -- (3,1.5) -- (0,1.5) -- cycle; 
\draw[thick, ->] (0,1)--(3,1) node[circle,right] {$\frac{1}{p}$} ;
\draw[thick, ->] (0,-2)--(0,1.5) node[circle,above] {$\rate$} ;
\draw[ thick,color=black] (-0.05,1) -- (0.05,1)  node[black,left] { $0$};
\draw[ thick,color=black] (-0.05,-1.25) -- (0.05,-1.25)  node[black,left] { $-\frac{d}{\indmom}$};
\draw[ thick,color=black](2.25,0.9) -- (2.25,1.1)  node[black,above] { \footnotesize{$1/\indmom$}};
\draw[ thick,color=black] (-0.05,-0.5) -- (0.05,-0.5)  node[black,left] { $-d$};
\draw[ thick,color=black](1.5,0.9) -- (1.5,1.1)  node[black,above] { $1$};
\draw[black, thick, -](0,-1.25) --(2.25,-1.25);
\draw[black, thick, ->](2.25,-1.25) --(3,-2);

\end{tikzpicture}
\end{subfigure}
\label{fig:taurhogenerallevy}
\caption{L\'evy white noise with $ 2/3 = \underline{\beta}_\infty < \indlocsup = 2$ and $\indmom  = 2/3$}
\end{figure}

	\subsection{L\'evy White Noises with Distinct Blumenthal-Getoor Indices}
	\label{subsec:handcrafted} 
	
	In Theorem \ref{maintheo:regdecaylimit}, we obtained lower and upper bounds for the local regularity of a L\'evy white noise (see \eqref{eq:LNtaurho}). This bounds are equal if and only if $\indlocinf = \indlocsup$. This equality is valid for all the examples presented in Section \ref{subsec:examples}.  It is however possible to construct characteristic exponents with indices that take any values $(\indlocinf ,\indlocsup) \in [0,2]^2$ with the obvious constraint that $\indlocinf \leq \indlocsup$. This is done in \cite[Examples 1.1.14, 1.1.15]{Farkas2001function}, where the authors introduce
	\begin{align}
		\Psi_{0,\beta_2,M}(\xi) &=    \sum_{k \geq 1} 2^{\beta_2 M^k - k} (\cos( 2^{-M^k}\xi) - 1), \\
		\Psi_{\beta_1,\beta_2,M}(\xi) &= \int_{\abs{t}\leq1} \frac{\cos(t \xi) - 1}{\abs{t}^{\beta_1+1}} \mathrm{d}t + \Psi_{\beta_2,M}(\xi),
	\end{align}
	with $0<\beta_1 \leq \beta_2 <2$ and $M > 2 / (2-\beta_2)$, and show that $\Psi_{0,\beta_2,M}$ ($\Psi_{\beta_1,\beta_2,M}$, respectively) is a characteristic exponent with Blumenthal-Getoor indices $\indlocinf = 0$ and $\indlocsup = \beta_2$ ($\indlocinf = \beta_1$ and $\indlocsup = \beta_2$, respectively).

	\subsection{Conclusive Remarks and Open Questions} \label{sec:conclusiveremarks}

	We have obtained new results on the localization of L\'evy white noises in weighted Besov spaces, summarized in Theorem \ref{maintheo:regdecaylimit}. This includes the identification of the local smoothness in many cases (including the examples presented in Section \ref{subsec:examples}), and lower and upper bounds for the general case.
	We also identify the asymptotic growth rate of  the L\'evy white noise in many situations, significantly improving known results.
	However, some   questions remain open for a definitive answer regarding the Besov regularity of L\'evy white noise. 
	
	\begin{itemize}
	
	\item { \emph{Moment estimates: the case $p>2$.}
		Our growth results in Theorem \ref{maintheo:regdecaylimit} present some restriction on the integrability parameter $p$, since  relation \eqref{eq:LNtaubelow} presently excludes the case  $p>2$, $p\notin 2\N$.
		{Our derivation makes extensive use of  \eqref{eq:pthmomentRV}, which is only valid for $p<2$. 
		As is classic in moment estimation, the case $p>2$ adds tehnical difficulties and deserve a specific treatment.}
			Nevertheless, we conjecture that the derived formulas remains true for any $0<p \leq \infty$, \emph{i.e.}, that our lower bound \eqref{eq:LNtaubelow2} is sharp for any integrability parameter.}

	\item { \emph{Blumenthal-Getoor indices and the local smoothness.} 
		When $\indlocinf < \indlocsup$ and $p \leq \indlocsup$, we have distinct lower and upper bounds for the local smoothness in Theorem \ref{maintheo:regdecaylimit}. 
		The identification of $\reg_p(w)$ for these cases is unknown at this stage.
		It is actually not even clear if $\reg_p(w)$ can be expressed in terms of the indices considered so far. We believe
		that a precise answer to  this question requires the development of   new tools to capture the precise behavior of the moments in relation with the scale $j$. A first step in this direction will be to consider the examples presented in Section \ref{subsec:handcrafted}.}

		\item \emph{Critical values.}
		We did not investigate the localization of a general L\'evy white noise for the critical values $\tau = \reg_p(w)$ or $\rho = \rate_p(w)$. However, partial answers have been given for compound Poisson white noises (Proposition \ref{prop:PoissonBesov}) and finite-moment white noises (Proposition \ref{prop:finiteMomentsBesov}). A complete characterization was given in the Gaussian case (Proposition \ref{prop:GaussBesov}). For the general case, we conjecture that $w \notin B_{p}^{\tau}(\R^d;\rho)$ as soon as $\tau = \reg_p(w)$ or $\rho = \rate_p(w)$, in accordance with  known results.
	\end{itemize}


\begin{footnotesize}
\bibliographystyle{amsplain}
\bibliography{references}
\end{footnotesize}




\ACKNO{This work has benefited from exchanges with Arash Amini, Carsten Chong, Robert Dalang, Felix Hummel, St\'ephane Jaffard, Alireza Fallah, Ren\'e Schilling, Philippe Th\'evenaz, Michael Unser, Virginie Uhlman, and John Paul Ward. 
The research leading to these results has received funding from the European Research Council under Grant H2020-ERC (ERC grant agreement $\text{n}^\circ$ 692726-GloblBioIm) and the Swiss National Science Foundation with grant agreement P2ELP2\_181759.}


\end{document}